\title{Sherman-Takeda type theorems for locally $C^*$-algebras.}
\author[1]{Lav Kumar Singh}
\author[1,2]{ Aljo\v{s}a Peperko}
\affil[1]{Institute of Mathematics, Physics and Mechanics, Jadranska ulica 19, SI-1000, Ljubljana, Slovenia.}
\affil[2]{Faculty of Mechanical Engineering, University of Ljubljana, A\v{s}ker\v{c}eva 6, SI-1000, Ljubljana, Slovenia.}
\date{}
\newtheorem{theorem}{Theorem}[section]
\newtheorem{corollary}{Corollary}[theorem]
\newtheorem{lemma}[theorem]{Lemma}
\newcommand{\mA}{\mathcal A}
\theoremstyle{definition}
\newtheorem{example}{Example}[section]
\newtheorem{remark}{Remark}[section]
\newtheorem{definition}{Definition}[section]
\DeclareRobustCommand{\looongrightarrow}{%
	\DOTSB\relbar\joinrel\relbar\joinrel\relbar\joinrel\rightarrow}
\begin{document}
	\newcommand{\Addresses}{{% additional braces for segregating \footnotesize
			\bigskip
			\footnotesize
			
			Lav Kumar Singh, \textsc{Institute of Mathematics, Physics and Mechanics, Jadranska ulica 19, SI-1000, Ljubljana, Slovenia.}\par\nopagebreak
			\textit{E-mail address}: \texttt{lavksingh@hotmail.com}
			
			\medskip
			
			Aljo\v{s}a Peperko, \textsc{Institute of Mathematics, Physics and Mechanics, Jadranska Ulica, Ljubljana, Slovenia. Faculty of Mechanical Engineering, A\v{s}ker\v{c}eva 6, SI-1000, Ljubljana, Slovenia}\par\nopagebreak
			\textit{E-mail address}: \texttt{aljosa.peperko@fs.uni-lj.si}		
	}}
	
	\maketitle
	\begin{abstract}
		In this article, we will first establish some density results for a locally $C^*$-algebra $\mathcal A$ and then identify a property, called Kaplansky density property (KDP). We then give a induced faithful continuous $*$-representation $\varphi$ of $\mathcal A^{**}$ (equipped with unique Arens product) on the space $B_{loc}(\mathcal H)$ such that $\varphi(\mathcal A^{**})\subset \overline{\pi(\mathcal A)}^{WOT}$, where $\pi:\mathcal A\to B_{loc}(\mathcal H)$ is the associated universal $*$-representation and $\mathcal H$ is the associated locally Hilbert space. Finally we show that for a Fr\'echet locally $C^*$-algebra $\mathcal A$ possessing KDP, the second strong dual is algebraically and topologically $*$-isomorphic to $ \overline{\pi(\mathcal A)}^{WOT}$, which is a direct analogue of the classical Sherman-Takeda theorem for $C^*$-algebras. We shall also observe the joint continuity of some associated bilinear maps in the running.
	\end{abstract}
	
	\noindent{\bf Keywords:} Locally Hilbert spaces, Pro/locally-$C^*$-algebras, Arens regularity, Fr\'echet spaces, Bidual.\\
	{\bf MSC 2020:} 46H15, 46K10, 46K05, 46H35.
	\section{Introduction}
	The second dual space of $C^*$-algebras and operator algebras has been extensively studied by several mathematicians over the years. A cornerstone result regarding the second dual of a $C^*$-algebra is the Sherman-Takeda theorem, first noted in \cite{Sherman} and \cite{Takeda}, and later by many others through different techniques. The theorem states that the second dual space $\mathcal A^{**}$ of any $C^*$-algebra $\mathcal A$ can be identified as an enveloping von Neumann algebra of $\mathcal A$. The following theorem precisely captures the essence of it
	\begin{theorem}[Sherman-Takeda]\label{STT}
		If $\mathcal A$ is a $C^*$-algebra and $\pi:\mathcal A\to B(\mathcal H)$ is its universal representation on a Hilbert space $\mathcal H$, then the second dual $\mathcal A^{**}$ is a von Neumann algebra which is isometrically $*$-isomorphic to $\pi(\mathcal A)^{cc}=\overline{\pi(\mathcal A)}^{WOT}$, where $\mathcal \pi(\mathcal A)^{cc}$ denotes the double commutant of $\pi(\mathcal A)$. 
	\end{theorem}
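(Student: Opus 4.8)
The plan is to identify $\mathcal A^{**}$, carrying its Arens product, with the enveloping von Neumann algebra $M:=\pi(\mathcal A)''$ by producing a single map that is at once an isometry, an algebra isomorphism, and $*$-preserving. The key structural fact to exploit is that $M$, being a von Neumann algebra, is itself a dual Banach space: it has a unique predual $M_*$ consisting of its $\sigma$-weakly continuous functionals, and by the bicommutant theorem $M=\overline{\pi(\mathcal A)}^{\,\mathrm{WOT}}=\overline{\pi(\mathcal A)}^{\,\sigma\text{-weak}}$. Thus the whole theorem reduces to exhibiting a canonical isometric isomorphism $\Phi\colon\mathcal A^{*}\to M_{*}$ and then taking adjoints.

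To build $\Phi$: since $\pi$ is the universal representation, $\mathcal H=\bigoplus_{\omega\in S(\mathcal A)}\mathcal H_\omega$ and every state is a vector state, $\omega(a)=\langle\pi(a)\xi_\omega,\xi_\omega\rangle$ with $\xi_\omega\in\mathcal H_\omega\subseteq\mathcal H$ the GNS cyclic vector. Set $\Phi(\omega)(x):=\langle x\,\xi_\omega,\xi_\omega\rangle$ for $x\in M$; this is $\sigma$-weakly continuous, so $\Phi(\omega)\in M_*$, and it restricts on $\pi(\mathcal A)$ to $\omega$. Every bounded functional on a $C^{*}$-algebra is a linear combination of at most four states, so $\Phi$ extends linearly to all of $\mathcal A^{*}$; it is well defined and injective because two $\sigma$-weakly continuous functionals agreeing on the $\sigma$-weakly dense set $\pi(\mathcal A)$ coincide, and surjective because any $\varrho\in M_*$ restricts to a functional on $\mathcal A$ that $\Phi$ maps back to $\varrho$.

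The crux --- and where I expect the genuine work to be --- is that $\Phi$ is \emph{isometric}. One inequality is trivial ($\|\Phi(\omega)\|_{M_*}\ge\|\omega\|$, since the left side is a sup over a larger set). For the reverse one must know that the supremum of $|\Phi(\omega)(x)|$ over the unit ball of $M$ is already essentially attained on the unit ball of $\pi(\mathcal A)$; this is precisely the Kaplansky density theorem, which says that the unit ball of $\pi(\mathcal A)$ is $\sigma$-weakly (indeed $\sigma$-strong${}^{*}$) dense in the unit ball of $M$. Combined with $\sigma$-weak continuity of $\Phi(\omega)$, this yields $\|\Phi(\omega)\|_{M_*}=\sup_{\|a\|\le 1}|\omega(a)|=\|\omega\|$. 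This is the linchpin, and it is exactly the classical fact whose locally $C^{*}$-analogue the present paper isolates as the KDP.

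Finally, dualize: $\Psi:=\Phi^{*}\colon\mathcal A^{**}=(\mathcal A^{*})^{*}\to(M_*)^{*}=M$ is an isometric Banach-space isomorphism, and unwinding definitions one sees that $\Psi$ restricted to $\mathcal A\hookrightarrow\mathcal A^{**}$ equals $\pi$. To see that $\Psi$ respects the algebra and $*$ structures, use that $\mathcal A$ is weak-$*$ dense in $\mathcal A^{**}$ (Goldstine), that the Arens product is separately weak-$*$ continuous, that multiplication in $M$ is separately $\sigma$-weakly continuous, and that $\Psi$ is a homeomorphism for these weak-$*$ topologies; since $\Psi(ab)=\pi(a)\pi(b)=\Psi(a)\Psi(b)$ and $\Psi(a^{*})=\pi(a)^{*}=\Psi(a)^{*}$ on the dense copy of $\mathcal A$, a two-step density argument propagates both identities to all of $\mathcal A^{**}$, and as a byproduct shows the two Arens products coincide (Arens regularity of $\mathcal A$). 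Hence $\mathcal A^{**}$ is, via an isometric $*$-isomorphism, the von Neumann algebra $M=\pi(\mathcal A)''=\overline{\pi(\mathcal A)}^{\,\mathrm{WOT}}$; in particular $\mathcal A^{**}$ is itself a von Neumann algebra. Apart from the isometry step powered by Kaplansky density, the remaining steps are routine density-and-continuity bookkeeping.
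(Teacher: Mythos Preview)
The paper does not prove Theorem~\ref{STT}: it is stated in the introduction as a classical result, attributed to Sherman and Takeda, and is used only as motivation and as a tool in Section~\ref{mainsection}. There is therefore no ``paper's own proof'' to compare against.

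Your sketch is a correct and standard outline of the classical argument: identify $\mathcal A^{*}$ isometrically with the predual $M_{*}$ of $M=\pi(\mathcal A)''$ (using that every state of $\mathcal A$ is a vector state in the universal representation, and Kaplansky density for the isometry), then dualize. One small slip: with $\Phi:\mathcal A^{*}\to M_{*}$, the adjoint $\Phi^{*}$ goes from $(M_{*})^{*}=M$ to $(\mathcal A^{*})^{*}=\mathcal A^{**}$, not the other way; what you want is $(\Phi^{-1})^{*}$ (equivalently $(\Phi^{*})^{-1}$), which is harmless since $\Phi$ is an isometric isomorphism. The density-and-continuity bookkeeping you describe for transporting the product and involution is exactly right, and indeed the paper invokes precisely this classical result (and the role of Kaplansky density in it) as the model for its locally $C^{*}$ analogue.
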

	\noindent	Further, it is known that the multiplication on $\mathcal A^{**}$ acquired from $\pi(\mathcal A)^{cc}$ agrees with either Arens products on $\mathcal A^{**}$ (both the Arens products are same on the second dual of a $C^*$-algebra). \\
	
	In this article, we study the bidual of locally $C^*$-algebras. The notion of \say{locally $C^*$-algebra} was given by A. Inoue in \cite{Inoue}. These objects are complete locally multiplicatively convex (lmc) topological algebras and are the inverse limits of $C^*$-algebras. They retain many properties of $C^*$-algebras despite not having a norm which generates its topology. Locally $C^*$-algebras also go by the name $pro~C^*\text{-}algebras$ in various literature like $\cite{Maria, Philip, Bhatt}$. Over the past few years, locally $C^*$-algebras have garnered significant interest among functional analysts (see \cite{Gheondea}). A notion of locally $W^*$-algebras was developed by M. Fragoulopulo\cite{Maria} and M. Joita\cite{Joita} parallel to the theory of von-Neumann algebras. The following natural questions persist in the view of Sherman-Takeda theorem for $C^*$-algebras.
	\begin{enumerate}[noitemsep] \item What is the structure of the second dual space of a locally $C^*$-algebra? \item Whether an analogous Sherman-Takeda type theorem holds true for locally $C^*$-algebras?
	\end{enumerate}
	To tackle these problems, one should first establish an appropriate topology on the dual space of a topological vector space. The natural choice of topology on the continuous dual space of a topological vector space is the strong-dual topology $\tau_b$ (topology of convergence on  bounded sets). The continuous dual space equipped with this topology is a locally convex space. Next we must ensure the existence of Arens products on the second continuous dual space of a locally $C^*$-algebras. S. Gulick showed in \cite{Gulick} that hypocontinuity of multiplication in a topological algebra guarantees the existence of two Arens product on the second continuous dual space. Recently M. Filali, in \cite{Filali} gave even more relaxed conditions which ensures the existence of Arens product on second continuous dual of a topological algebras. All lmc-algebras (and in particular locally $C^*$-algebras) have hypocontinuous (in fact jointly continuous) multiplication and hence their second dual space admits two Arens products, with respect to which the second dual space becomes  a locally convex topological algebra. Within this setting, we shall obtain the following main results in section 4. 
	\begin{theorem}
		If a Fr\'echet locally $C^*$-algebra $\mathcal A$ has Kaplanski density property then $\mathcal A^{**}$ is topologically $*$- isomorphic to the algebra $\overline{\pi(\mathcal A)}^{WOT}$, where $\pi:\mathcal A\to B_{loc}(\mathcal H)$ is the universal representation. ( Th.\ref{main-new}).
	\end{theorem}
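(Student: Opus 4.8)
The plan is to reduce the statement, via the Fr\'echet structure, to the classical Sherman--Takeda theorem applied level by level, and then to use the KDP precisely to glue the levels back together. Since $\mathcal{A}$ is a Fr\'echet locally $C^*$-algebra its topology is generated by an increasing sequence of $C^*$-seminorms $(p_n)_{n\in\mathbb{N}}$, so $\mathcal{A}=\varprojlim_n\mathcal{A}_n$ is a reduced projective limit of the $C^*$-algebras $\mathcal{A}_n=\overline{\mathcal{A}/\ker p_n}$, with surjective connecting $*$-homomorphisms $\rho_{mn}\colon\mathcal{A}_n\to\mathcal{A}_m$ for $m\le n$; correspondingly the universal representation is $\pi=\varinjlim_n\pi_n$ on the locally Hilbert space $\mathcal{H}=\varinjlim_n\mathcal{H}_n$, where $\pi_n\colon\mathcal{A}_n\to B(\mathcal{H}_n)$ is the universal representation of $\mathcal{A}_n$. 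First I would identify the bidual: using the standard duality between Fr\'echet and $(DF)$- (resp. $(LB)$-) spaces one has $\mathcal{A}^*_b=\varinjlim_n\mathcal{A}_n^*$, and dualizing once more yields a topological isomorphism $\mathcal{A}^{**}=(\mathcal{A}^*_b)^*_b\cong\varprojlim_n\mathcal{A}_n^{**}$, the connecting maps being the second adjoints $\rho_{mn}^{**}$, which are normal $*$-homomorphisms. Here one must check compatibility of the (unique, by Arens regularity of $C^*$-algebras) Arens products and of the involutions with the $\rho_{mn}^{**}$, and that the resulting product and involution on $\varprojlim_n\mathcal{A}_n^{**}$ coincide with the Arens product and involution of $\mathcal{A}^{**}$; this is where the joint continuity of the multiplication on $\mathcal{A}$, noted earlier, enters.

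Next I would apply Theorem~\ref{STT} to each $\mathcal{A}_n$: there is an isometric $*$-isomorphism $\mathcal{A}_n^{**}\xrightarrow{\sim}\pi_n(\mathcal{A}_n)''=\overline{\pi_n(\mathcal{A}_n)}^{WOT}\subseteq B(\mathcal{H}_n)$ carrying the Arens product to the von Neumann product, and these isomorphisms are natural in $n$ (the square formed by $\rho_{mn}^{**}$ and the induced normal $*$-homomorphism $\overline{\pi_n(\mathcal{A}_n)}^{WOT}\to\overline{\pi_m(\mathcal{A}_m)}^{WOT}$ commutes). Passing to projective limits gives a topological $*$-isomorphism $\mathcal{A}^{**}\cong\varprojlim_n\overline{\pi_n(\mathcal{A}_n)}^{WOT}$. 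It then remains to identify the right-hand side, inside $B_{loc}(\mathcal{H})\cong\varprojlim_n B(\mathcal{H}_n)$, with $\overline{\pi(\mathcal{A})}^{WOT}$; under this identification the composite isomorphism is exactly the representation $\varphi$ built earlier, already known to be faithful, continuous, and to satisfy $\varphi(\mathcal{A}^{**})\subseteq\overline{\pi(\mathcal{A})}^{WOT}$. So everything reduces to surjectivity of $\varphi$ onto $\overline{\pi(\mathcal{A})}^{WOT}$, equivalently to the equality $\varprojlim_n\overline{\pi_n(\mathcal{A}_n)}^{WOT}=\overline{\pi(\mathcal{A})}^{WOT}$.

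This last equality is where the KDP is indispensable. The inclusion $\overline{\pi(\mathcal{A})}^{WOT}\subseteq\varprojlim_n\overline{\pi_n(\mathcal{A}_n)}^{WOT}$ is immediate, since each projection $B_{loc}(\mathcal{H})\to B(\mathcal{H}_n)$ is WOT--WOT continuous and carries $\pi(\mathcal{A})$ into $\pi_n(\mathcal{A}_n)$. For the reverse inclusion, given a compatible family $(T_n)_n$ with $T_n\in\overline{\pi_n(\mathcal{A}_n)}^{WOT}$, the KDP allows one to approximate, at each level, $T_n$ in WOT by elements of $\pi_n(\mathcal{A}_n)$ lying in a ball whose radius is controlled by a fixed multiple of $\|T_n\|$; this norm control is exactly what makes the finitely many level-$k$ constraints ($k\le n$) simultaneously satisfiable, so a diagonal/cofinality argument produces a single net in $\pi(\mathcal{A})$ converging to $(T_n)_n$ in the WOT of $B_{loc}(\mathcal{H})$. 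Hence $(T_n)_n\in\overline{\pi(\mathcal{A})}^{WOT}$, and $\varphi$ is a $*$-isomorphism of $\mathcal{A}^{**}$ onto $\overline{\pi(\mathcal{A})}^{WOT}$. Finally, both $\mathcal{A}^{**}$ (the strong bidual of a Fr\'echet space) and $\overline{\pi(\mathcal{A})}^{WOT}$ (a closed subspace of the Fr\'echet space $B_{loc}(\mathcal{H})$) are Fr\'echet, so the continuous linear bijection $\varphi$ is automatically a homeomorphism by the open mapping theorem, giving the asserted topological $*$-isomorphism.

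The main obstacle is this last step: extracting from the KDP a single WOT-convergent net in $\pi(\mathcal{A})$ that simultaneously realises an arbitrary compatible family $(T_n)_n$, rather than separate nets at each level. This simultaneous, Kaplansky-type approximation is exactly what can fail for a locally $C^*$-algebra lacking the property, and is the reason the theorem is stated under the KDP. A secondary, more technical point is the bookkeeping in the first step --- verifying that $\mathcal{A}^{**}$ really is the projective limit of the biduals $\mathcal{A}_n^{**}$ as a locally convex $*$-algebra, i.e.\ regularity of the $(LB)$-structure on $\mathcal{A}^*_b$ together with compatibility of the Arens products and involutions with the connecting maps $\rho_{mn}^{**}$.
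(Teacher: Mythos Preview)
Your plan has a genuine gap: you have swapped the roles of the Fr\'echet hypothesis and the KDP. The identification $\mathcal{A}^{**}\cong\varprojlim_n\mathcal{A}_n^{**}$ that you claim in your first step as a consequence of ``standard Fr\'echet/$(LB)$ duality'' is \emph{not} automatic; the paper explicitly records (Remark~2.1 and the discussion preceding Lemma~\ref{weak-wot}) that the canonical map $\theta\colon\mathcal{A}^{**}\to\varprojlim_n\mathcal{A}_n^{**}$ is injective but in general \emph{not} surjective. What you call a ``secondary, more technical point'' --- that the inductive limit topology on $\varinjlim_n\mathcal{A}_n^*$ coincides with the strong dual topology $\tau_b$ on $\mathcal{A}^*$, and that the resulting $(LB)$-space is regular --- is in fact the entire difficulty, and it does not follow from Fr\'echet-ness alone. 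Conversely, the equality $\varprojlim_n\overline{\pi_n(\mathcal{A}_n)}^{WOT}=\overline{\pi(\mathcal{A})}^{WOT}$, which you isolate as the step requiring KDP, holds unconditionally: the paper cites Joi\c{t}a's results (Th.~3.6, Prop.~3.14--3.15 in \cite{Joita}) for precisely this identification, with no hypothesis beyond the inverse-limit description of $\mathcal{A}$.

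The paper's route is therefore the reverse of yours. Given a self-adjoint $T\in\overline{\pi(\mathcal{A})}^{WOT}$, KDP is used to produce a net $\{a^{(i)}\}$ in $\mathcal{A}_{sa}$ with $\pi(a^{(i)})\to T$ in SOT and, crucially, $\|\pi(a^{(i)})_{|\mathcal{K}_r}\|\le\|T_{|\mathcal{K}_r}\|$ for \emph{every} $r$; this makes $\{a^{(i)}\}$ von~Neumann bounded in $\mathcal{A}$, so $\{J(a^{(i)})\}$ is $\sigma(\mathcal{A}^{**},\mathcal{A}^*)$-relatively compact, and any cluster point $u$ satisfies $\varphi(u)=T$ by Lemma~\ref{weak-wot}. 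Thus KDP is exactly what forces surjectivity of $\varphi$ (equivalently of $\theta$), and the isomorphism $\mathcal{A}^{**}\cong\varprojlim_n\mathcal{A}_n^{**}$ is a \emph{consequence} of the theorem, not an input. Your final paragraph invoking the open mapping theorem between Fr\'echet spaces is correct and matches the paper.
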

	
	To establish this result, we need lay down the preliminaries and some Kaplanski density type results for locally $C^*$-algebras, which we shall do in section 2 and section 3.
	\section{Preliminaries}
	Throughout this article, a topological vector space is always assumed to be Hausdorff, unless stated otherwise. By a \emph{topological algebra}, we mean a topological vector space equipped with an associative multiplication structure which makes it an algebra. We need the following notion of inverse and direct limits.
	\begin{definition}[Inverse/Projective limit]
		An \emph{inverse system} of locally convex topological vector spaces consists of a family of
		 locally convex topological vector spaces $\{\mathcal{V}_\alpha\}_{\alpha\in \mathcal I}$ indexed on a directed set $\mathcal I$, along with 
			 continuous linear maps $f_{\alpha\beta}: \mathcal{V}_\beta \to \mathcal{V}_\alpha$ for all $\alpha \leq \beta \in \mathcal I$, satisfying:
			$f_{\alpha\alpha} = \mathrm{id}_{\mathcal{V}_\alpha}$ (identity map) and
			$f_{\alpha\gamma} = f_{\alpha\beta} \circ f_{\beta\gamma}$ for $\alpha \leq \beta \leq \gamma$ (compatibility).		
		The \emph{inverse limit} $\varprojlim \mathcal{V}_\alpha$, is the subspace of the direct product $\prod_{\alpha \in I} \mathcal{V}_\alpha$:
		\[
		\varprojlim_{\alpha \in I} \mathcal{V}_\alpha = \left\{ (v_\alpha)_{\alpha \in I} \in \prod_{\alpha \in I} \mathcal{V}_\alpha \ \middle|\ f_{\alpha\beta}(v_\beta) = v_\alpha \ \forall \alpha \leq \beta  \right\}.
		\]
		The inverse limit comes equipped with canonical continuous projection maps $\pi_\alpha: \varprojlim \mathcal{V}_\alpha \to \mathcal{V}_\alpha$, defined by $\pi_\alpha((v_\beta)_{\beta}) = v_\alpha$. These satisfy $\pi_\alpha = f_{\alpha\beta} \circ \pi_\beta$ for $\alpha \leq \beta$. The topology on $\varprojlim \mathcal V_\alpha$ is actually the initial topology with respect to the projection maps which makes $\varprojlim \mathcal V_\alpha$ a locally convex topological vector space.
		The universal property of inverse limits states that
		 for any vector space $\mathcal W$ with continuous linear maps $g_\alpha: \mathcal W \to \mathcal{V}_\alpha$ compatible under the transitions (i.e., $f_{\alpha\beta} \circ g_\beta = g_\alpha$ for $\alpha \leq \beta$), there exists a unique continuous linear map $g: \mathcal W \to \varprojlim \mathcal{V}_\alpha$ such that $\pi_\alpha \circ g = g_\alpha$ for all $\alpha$.
			\end{definition}
	\begin{definition}[Direct/Inductive limit]
		Let $\mathcal I$ be a directed poset. A \emph{direct system} consists of:
		a family of locally convex topological vector spaces $\{\mathcal{V}_\alpha\}_{\alpha\in \mathcal I}$, linear continuous maps $f_{\alpha\beta}: \mathcal{V}_\alpha \to \mathcal{V}_\beta$ for all $\alpha \leq \beta \in \mathcal I$, satisfying: $f_{\alpha\alpha} = \mathrm{id}_{\mathcal{V}_\alpha}$ (identity map), $f_{\beta\gamma} \circ f_{\alpha\beta} = f_{\alpha\gamma}$ for $\alpha \leq \beta \leq \gamma$ (compatibility). 		
		The \emph{direct limit} $\varinjlim \mathcal{V}_\alpha$ is the quotient of the direct sum $\bigoplus_{\alpha \in I} \mathcal{V}_\alpha$ by the subspace $N$ generated by elements of the form $v_\alpha - f_{\alpha\beta}(v_\alpha)$ for all $\alpha \leq \beta \in \mathcal I$ and $v_\alpha \in \mathcal{V}_\alpha$:
		\[
		\varinjlim_{\alpha \in I} \mathcal{V}_\alpha = \left( \bigoplus_{\alpha \in I} \mathcal{V}_\alpha \right) \Big/ N,
		\]
		where $N = \langle v_\alpha - f_{\alpha\beta}(v_\alpha) \mid \alpha \leq \beta, \, v_\alpha \in \mathcal{V}_\alpha \rangle$.
		Each equivalence class $[ (v_\alpha) ]$ (with finitely many nonzero components) represents a compatible family where components ``induce'' consistently via the transition maps; elements are glued together across indices. Inherited from the direct sum quotient: addition and scalar multiplication are well-defined on cosets. Equivalently, it can be viewed as the disjoint union $\coprod_{\alpha \in I} \mathcal{V}_\alpha$ modulo the equivalence relation $\sim$ where $v_\alpha \sim f_{\alpha\beta}(v_\alpha)$ for $\alpha \leq \beta$. The direct limit comes equipped with canonical inclusion maps $\iota_\alpha: \mathcal{V}_\alpha \to \varinjlim \mathcal{V}_\alpha$, defined by $\iota_\alpha(v_\alpha) = [v_\alpha]$ (the class with $v_\alpha$ in the $\alpha$-component). These satisfy $\iota_\beta \circ f_{\alpha\beta} = \iota_\alpha$ for $\alpha \leq \beta$. The topology on $\varinjlim\mathcal V_\alpha$ is the finest locally convex topology with respect to these inclusions. A sub-base for inductive limit topology is given by
		$$\mathcal B=\{U\subset \varinjlim\mathcal V_\alpha, \text{convex, balanced and absorbing}~:~\iota_\alpha^{-1}(U)~\text{is neighbourhood of~} \mathcal V_\alpha~\forall \alpha\in \mathcal I\}.$$ The universal property for direct limit states that if $\mathcal W$ is another locally convex topological vector space such that for each $\alpha\in \mathcal I$ there is a continuous linear map $\eta_\alpha:\mathcal V_\alpha\to \mathcal W$, then there exists a continuous linear map $\eta:\varinjlim \mathcal V_\alpha\to \mathcal W$. The projective system $\{\mathcal V_\alpha, f_{\alpha,\beta}\}$ is said to be \emph{reduced} if $f_{\alpha,\beta}(\mathcal V_\beta)$ is a dense subspace of $\mathcal V_\alpha$ for each $\alpha<\beta$.
	\end{definition}
	\begin{lemma}\label{dualdir}
		Let $\{\mathcal V_\alpha, \iota_{\alpha,\beta}\}$ be an inductive system of locally convex spaces and $\mathcal V=\varinjlim \mathcal V_\alpha$, then we have the following isomorphism of topological vector spaces \cite[IV.4.5]{Schaefer}. $$\bigl(\mathcal V^*,\sigma(\mathcal V^*,\mathcal V)\bigr)=\varprojlim \bigl(\mathcal V_\alpha^*,\sigma(\mathcal V_\alpha^*,\mathcal V_\alpha)\bigr).$$
	\end{lemma}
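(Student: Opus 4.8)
The plan is to write down an explicit algebraic isomorphism $\Phi\colon\mathcal V^*\to\varprojlim\mathcal V_\alpha^*$ between the continuous duals and then check that it intertwines the two weak topologies. First I would set up the inverse system on the dual side: for $\alpha\leq\beta$ the transition $\iota_{\alpha,\beta}\colon\mathcal V_\alpha\to\mathcal V_\beta$ is continuous, so its transpose ${}^{t}\iota_{\alpha,\beta}\colon\bigl(\mathcal V_\beta^*,\sigma(\mathcal V_\beta^*,\mathcal V_\beta)\bigr)\to\bigl(\mathcal V_\alpha^*,\sigma(\mathcal V_\alpha^*,\mathcal V_\alpha)\bigr)$, $g\mapsto g\circ\iota_{\alpha,\beta}$, is weak-$*$ continuous, and the cocycle relations for $\iota_{\alpha,\beta}$ dualize to the cocycle relations for the ${}^{t}\iota_{\alpha,\beta}$. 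Thus $\{\bigl(\mathcal V_\alpha^*,\sigma(\mathcal V_\alpha^*,\mathcal V_\alpha)\bigr),{}^{t}\iota_{\alpha,\beta}\}$ is an inverse system, whose limit carries the initial topology for the coordinate projections $p_\alpha$. Dualizing $\iota_\beta\circ\iota_{\alpha,\beta}=\iota_\alpha$ gives ${}^{t}\iota_{\alpha,\beta}\circ{}^{t}\iota_\beta={}^{t}\iota_\alpha$, so for each $f\in\mathcal V^*$ the family $({}^{t}\iota_\alpha f)_\alpha=(f\circ\iota_\alpha)_\alpha$ is compatible, and I set $\Phi(f):=({}^{t}\iota_\alpha f)_\alpha\in\varprojlim\mathcal V_\alpha^*$.

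Next I would verify that $\Phi$ is bijective. The key set-theoretic fact is the exhaustion $\mathcal V=\bigcup_\alpha\iota_\alpha(\mathcal V_\alpha)$: since $\mathcal I$ is directed, any coset in $\bigl(\bigoplus\mathcal V_\alpha\bigr)/N$ has a representative concentrated at a single index (push the finitely many nonzero components along the transitions to a common upper bound), so every $v\in\mathcal V$ equals $\iota_\alpha(v_\alpha)$ for some $\alpha$ and $v_\alpha\in\mathcal V_\alpha$. Injectivity of $\Phi$ is then immediate: if $\Phi(f)=0$ then $f$ vanishes on $\iota_\alpha(\mathcal V_\alpha)$ for every $\alpha$, hence on all of $\mathcal V$. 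For surjectivity, given a compatible family $(f_\alpha)\in\varprojlim\mathcal V_\alpha^*$ I would define $f\colon\mathcal V\to\mathbb C$ by $f(\iota_\alpha(v_\alpha)):=f_\alpha(v_\alpha)$; well-definedness and linearity follow by pushing finitely many indices to a common upper bound and using ${}^{t}\iota_{\alpha,\beta}(f_\beta)=f_\alpha$, and continuity of $f$ is precisely the universal property of the inductive-limit topology applied to the continuous compatible maps $f_\alpha\colon\mathcal V_\alpha\to\mathbb C$ (note $f_\beta\circ\iota_{\alpha,\beta}=f_\alpha$). Then $f\in\mathcal V^*$ and $\Phi(f)=(f_\alpha)_\alpha$.

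Finally I would match the topologies. The projective-limit topology on $\varprojlim\mathcal V_\alpha^*$ is generated by the seminorms $(g_\beta)_\beta\mapsto|g_\alpha(v_\alpha)|$ ($\alpha\in\mathcal I$, $v_\alpha\in\mathcal V_\alpha$), obtained by pulling back along $p_\alpha$ the defining seminorms of $\sigma(\mathcal V_\alpha^*,\mathcal V_\alpha)$. Transported through $\Phi$, such a seminorm becomes $f\mapsto|({}^{t}\iota_\alpha f)(v_\alpha)|=|f(\iota_\alpha(v_\alpha))|$, i.e. the evaluation seminorm at the point $\iota_\alpha(v_\alpha)\in\mathcal V$; conversely, by the exhaustion $\mathcal V=\bigcup_\alpha\iota_\alpha(\mathcal V_\alpha)$, every evaluation seminorm $f\mapsto|f(v)|$ that defines $\sigma(\mathcal V^*,\mathcal V)$ arises in this way. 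Hence $\Phi$ carries one generating family of seminorms bijectively onto the other, so both $\Phi$ and $\Phi^{-1}$ are continuous and $\Phi$ is the asserted isomorphism of topological vector spaces.

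The only step that needs genuine care — and the one I would flag as the main obstacle — is this last identification of the two weak topologies, which rests entirely on the exhaustion $\mathcal V=\bigcup_\alpha\iota_\alpha(\mathcal V_\alpha)$; the rest is formal bookkeeping with transposes and the universal properties of the two limits. Of course one may instead simply cite \cite[IV.4.5]{Schaefer} for the statement as given.
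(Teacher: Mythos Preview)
Your proof is correct and is the standard transpose-and-universal-property argument. The paper itself gives no proof of this lemma at all; it simply records the statement with the citation \cite[IV.4.5]{Schaefer}, so there is nothing to compare beyond noting that your argument is precisely the unpacking of that reference.
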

	\begin{lemma} Let $\{\mathcal V_\alpha, f_{\alpha,\beta}\}$ be a reduced projective system of locally convex spaces and $\mathcal V=\varprojlim \mathcal V_\alpha$, then we have the following isomorphism of topological vector spaces $$ \bigl(\mathcal V^*,\tau (\mathcal V^*, \mathcal V)\bigr)=\varinjlim \bigl(\mathcal V_\alpha^*, \tau(\mathcal V_\alpha^*, \mathcal V_\alpha)\bigr)$$ 
		where $\tau(X,X')$ denotes the Mackey topology with respect to the dual pairing $(X,X')$ of topological vector spaces. {\cite[IV.4.4]{Schaefer}}
		\end{lemma}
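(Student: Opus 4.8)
The plan is to exhibit the claimed identity as an honest isomorphism of topological vector spaces, induced by the canonical maps of the two limits; there are two jobs, an algebraic identification $\mathcal V^*\cong\varinjlim\mathcal V_\alpha^*$ and the coincidence of the Mackey topology with the inductive-limit topology through that identification. Write $f_\alpha\colon\mathcal V\to\mathcal V_\alpha$ for the canonical projections of $\varprojlim\mathcal V_\alpha$ and $f_\alpha^*\colon\mathcal V_\alpha^*\to\mathcal V^*$, $f_{\alpha\beta}^*\colon\mathcal V_\alpha^*\to\mathcal V_\beta^*$ (for $\alpha\le\beta$) for the corresponding adjoints. From $f_{\alpha\gamma}=f_{\alpha\beta}\circ f_{\beta\gamma}$ and $f_\alpha=f_{\alpha\beta}\circ f_\beta$ one reads off $f_{\alpha\gamma}^*=f_{\beta\gamma}^*\circ f_{\alpha\beta}^*$ and $f_\alpha^*=f_\beta^*\circ f_{\alpha\beta}^*$, so that $\{\mathcal V_\alpha^*,f_{\alpha\beta}^*\}$ is a direct system and $(f_\alpha^*)_\alpha$ is compatible with it. Because $f_{\alpha\beta}$ has dense range (reducedness), any $\psi\in\mathcal V_\alpha^*$ with $f_{\alpha\beta}^*\psi=0$ vanishes on a dense subspace of $\mathcal V_\alpha$ and hence is $0$; thus every bonding map $f_{\alpha\beta}^*$ is injective and $\varinjlim\mathcal V_\alpha^*$ is, at the level of vector spaces, the increasing union $\bigcup_\alpha f_\alpha^*(\mathcal V_\alpha^*)$. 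One checks in addition that each $f_{\alpha\beta}^*$ is continuous for the Mackey topologies --- it carries absolutely convex $\sigma(\mathcal V_\beta,\mathcal V_\beta^*)$-compact sets into such sets of $\mathcal V_\alpha$, since $f_{\alpha\beta}$ is weakly continuous --- so that $\{(\mathcal V_\alpha^*,\tau(\mathcal V_\alpha^*,\mathcal V_\alpha)),f_{\alpha\beta}^*\}$ is a bona fide inductive system of locally convex spaces.

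For the algebraic identification, the universal property of the direct limit yields a linear map $\Phi\colon\varinjlim\mathcal V_\alpha^*\to\mathcal V^*$ with $\Phi\circ\iota_\alpha=f_\alpha^*$. For surjectivity, let $\varphi\in\mathcal V^*$; continuity of $\varphi$ for the projective (initial) topology forces $|\varphi|$ to be dominated by a continuous seminorm of the shape $\max_{1\le i\le n}(p_i\circ f_{\alpha_i})$, and, $\mathcal I$ being directed, one may pick $\beta\ge\alpha_1,\dots,\alpha_n$ and (using $f_{\alpha_i}=f_{\alpha_i\beta}\circ f_\beta$ with $f_{\alpha_i\beta}$ continuous) a continuous seminorm $q$ on $\mathcal V_\beta$ with $|\varphi|\le q\circ f_\beta$. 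Then $\varphi$ factors through $f_\beta$ as $\varphi=\psi_0\circ f_\beta$ with $\psi_0$ continuous on the subspace $f_\beta(\mathcal V)\subseteq\mathcal V_\beta$; a Hahn--Banach extension $\widetilde\psi\in\mathcal V_\beta^*$ of $\psi_0$ gives $\varphi=f_\beta^*\widetilde\psi$. For injectivity, one uses that the canonical projections $f_\alpha$ themselves have dense range (this is what reducedness gives for the limit; it holds automatically, e.g., when the $\mathcal V_\alpha$ are Fr\'echet, by the Mittag--Leffler lemma): a class killed by $\Phi$ is represented by some $\psi\in\mathcal V_\alpha^*$ with $\psi\circ f_\alpha=0$, hence $\psi$ annihilates the dense set $f_\alpha(\mathcal V)$ and vanishes. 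So $\Phi$ is a linear bijection, under which the dual pairing $(\mathcal V^*,\mathcal V)$ becomes the pairing of $\varinjlim\mathcal V_\alpha^*$ against $\mathcal V$.

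It remains to match the topologies through $\Phi$. Continuity: by the universal property of the inductive-limit topology it is enough that each $f_\alpha^*\colon(\mathcal V_\alpha^*,\tau(\mathcal V_\alpha^*,\mathcal V_\alpha))\to(\mathcal V^*,\tau(\mathcal V^*,\mathcal V))$ be continuous; given a basic Mackey neighbourhood $K^\circ$ of $0$, with $K\subseteq\mathcal V$ absolutely convex and $\sigma(\mathcal V,\mathcal V^*)$-compact, weak continuity of $f_\alpha$ makes $f_\alpha(K)$ absolutely convex and $\sigma(\mathcal V_\alpha,\mathcal V_\alpha^*)$-compact, and the elementary identity $(f_\alpha^*)^{-1}(K^\circ)=(f_\alpha(K))^\circ$ shows the preimage to be a basic Mackey neighbourhood of $0$. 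Openness: $\Phi^{-1}(K^\circ)$ is absolutely convex and absorbing, and $\iota_\alpha^{-1}\bigl(\Phi^{-1}(K^\circ)\bigr)=(f_\alpha(K))^\circ$ is a $\tau(\mathcal V_\alpha^*,\mathcal V_\alpha)$-neighbourhood of $0$ for every $\alpha$, so $\Phi^{-1}(K^\circ)$ lies in the family $\mathcal B$ defining the inductive-limit topology --- which, being stable under finite intersections, is a neighbourhood base of $0$. Hence $\Phi$ is an isomorphism of topological vector spaces. (A quicker variant of this last step: a locally convex inductive limit of Mackey spaces is again a Mackey space, each $(\mathcal V_\alpha^*,\tau(\mathcal V_\alpha^*,\mathcal V_\alpha))$ is Mackey, and by Lemma~\ref{dualdir} the continuous dual of $\varinjlim\mathcal V_\alpha^*$ is $\varprojlim\mathcal V_\alpha=\mathcal V$ --- the adjoints of the $f_{\alpha\beta}^*$ being the $f_{\alpha\beta}$ themselves --- so the inductive-limit topology is forced to equal $\tau(\mathcal V^*,\mathcal V)$.)

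The step I expect to cost the most care is the algebraic identification, and within it the precise use of reducedness: one must make sure that the canonical projections $f_\alpha$, not merely the transition maps $f_{\alpha\beta}$, have dense range --- otherwise $\Phi$ need not be injective and the inductive limit could collapse --- and one should observe that the inductive limit is then automatically Hausdorff, since it acquires via $\Phi$ the Mackey topology of the separated dual pair $(\mathcal V^*,\mathcal V)$. The rest --- the manipulations with seminorms, polars of weakly compact sets, and the universal properties of $\varprojlim$ and $\varinjlim$ --- is routine.
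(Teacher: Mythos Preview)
The paper supplies no proof of this lemma at all: it is stated with a bare citation to Schaefer~IV.4.4 and nothing further. Your write-up therefore does not need to be compared against an existing argument --- it \emph{is} the argument the paper elects not to reproduce, and it is essentially the standard one (dualize the projective system, identify $\mathcal V^*$ with the union $\bigcup_\alpha f_\alpha^*(\mathcal V_\alpha^*)$ via factorization of continuous functionals through a single projection, then match topologies either by the polar calculation or by the ``inductive limit of Mackey spaces is Mackey'' shortcut).

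One genuine point of friction you have already put your finger on deserves to be stated more sharply. The paper \emph{defines} ``reduced'' to mean that the \emph{transition maps} $f_{\alpha\beta}$ have dense range; your injectivity argument for $\Phi$ needs the \emph{limit projections} $f_\alpha$ to have dense range, and the former does not in general imply the latter (the classical counterexamples to surjectivity of $\varprojlim$ for non-Mittag--Leffler towers already show this). Schaefer's formulation in IV.4.4 sidesteps the issue by passing to the ``reduced'' system in his sense, i.e.\ replacing each $\mathcal V_\alpha$ by the closure of $f_\alpha(\mathcal V)$; with the paper's definition as written the hypothesis is, strictly speaking, too weak. Since in the paper's applications the $\mathcal V_\alpha$ are $C^*$-algebras (hence Fr\'echet) and the system is countable, Mittag--Leffler does rescue the conclusion, exactly as you note parenthetically --- but it would be worth saying explicitly that either one adopts Schaefer's stronger meaning of ``reduced'' or one invokes Mittag--Leffler for the Fr\'echet case, rather than leaving it as an aside.
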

			\begin{remark}
			In general,  $\bigl(\varprojlim \mathcal V_\alpha\bigr)^{**}\neq \bigl(\varprojlim \mathcal V_\alpha^{**}\bigr)$ with respect to the strong-dual topologies on dual spaces.
		\end{remark}
		\begin{definition}
		Let $\Lambda$ be an ordered poset, $\{\mathcal H_\alpha\}_{\alpha\in \Lambda}$ be a family of Hilbert spaces such that $\mathcal H_\alpha\subseteq H_\beta$ whenever $\alpha\leq \beta$ and there exists an embedding of Hilbert spaces $T_{\alpha,\beta}:\mathcal H_\alpha\to \mathcal H_\beta$ whenever $\alpha\leq \beta$. Then the \emph{locally Hilbert space} $\mathcal H$ associated to this directed system of Hilbert spaces is the direct limit $$\mathcal H=\underset{\longrightarrow}{\lim}\mathcal H_\alpha=\coprod_{\alpha\in \Lambda} \mathcal H_\alpha$$
		equipped with the inductive limit topology (finest locally convex initial topology).
	\end{definition}
	\begin{lemma}\label{limdual}
		The continuous dual $\mathcal H^*$ of a locally Hilbert space $\mathcal H=\coprod_{\alpha\in \Lambda} H_\alpha$ is isomorphic (as a vector space) to the inverse limit $\underset{\longleftarrow}{\lim}\mathcal H_\alpha^*$.
	\end{lemma}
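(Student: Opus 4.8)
The statement is essentially the ``underlying vector space'' shadow of Lemma~\ref{dualdir} applied to the inductive system $\{\mathcal H_\alpha, T_{\alpha,\beta}\}$, so the plan is to make the identifying map explicit, check it is a linear bijection, and note that the topological refinement (weak-$*$ topologies) is then exactly Lemma~\ref{dualdir}. First I would fix notation: let $\iota_\alpha\colon \mathcal H_\alpha\to\mathcal H$ be the canonical maps of the inductive limit, so $\iota_\beta\circ T_{\alpha,\beta}=\iota_\alpha$ for $\alpha\le\beta$. Since every $T_{\alpha,\beta}$ is an isometric embedding of Hilbert spaces, the system is injective, each $\iota_\alpha$ is injective, and algebraically $\mathcal H=\bigcup_{\alpha\in\Lambda}\iota_\alpha(\mathcal H_\alpha)$. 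Dualizing, the transpose $T_{\alpha,\beta}^{*}\colon\mathcal H_\beta^{*}\to\mathcal H_\alpha^{*}$, $g\mapsto g\circ T_{\alpha,\beta}$, turns $\{\mathcal H_\alpha^{*}, T_{\alpha,\beta}^{*}\}$ into an inverse system (the functoriality identities $T_{\alpha,\alpha}^{*}=\mathrm{id}$ and $T_{\alpha,\gamma}^{*}=T_{\alpha,\beta}^{*}\circ T_{\beta,\gamma}^{*}$ being immediate); concretely, after the Riesz identification $\mathcal H_\alpha^{*}\cong\mathcal H_\alpha$ these transposes become the orthogonal projections $\mathcal H_\beta\to\mathcal H_\alpha$, so $\varprojlim\mathcal H_\alpha^{*}$ may be pictured as the space of coherent families $(f_\alpha)_\alpha$ of functionals with $f_\beta|_{\mathcal H_\alpha}=f_\alpha$ whenever $\alpha\le\beta$.

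The candidate isomorphism is $\Phi\colon\mathcal H^{*}\to\varprojlim\mathcal H_\alpha^{*}$, $\Phi(F)=(F\circ\iota_\alpha)_\alpha$. I would first check $\Phi$ is well defined: for $\alpha\le\beta$ one has $T_{\alpha,\beta}^{*}(F\circ\iota_\beta)=F\circ(\iota_\beta\circ T_{\alpha,\beta})=F\circ\iota_\alpha$, so the family $(F\circ\iota_\alpha)_\alpha$ lies in the inverse limit, and linearity is clear. Injectivity is equally immediate: if $F\circ\iota_\alpha=0$ for all $\alpha$, then $F$ vanishes on $\bigcup_\alpha\iota_\alpha(\mathcal H_\alpha)=\mathcal H$, hence $F=0$.

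The substantive step is surjectivity. Given a coherent family $(f_\alpha)_\alpha\in\varprojlim\mathcal H_\alpha^{*}$, I would define $F\colon\mathcal H\to\mathbb C$ by $F(\xi)=f_\alpha(\eta)$ for any representation $\xi=\iota_\alpha(\eta)$; coherence of the family together with the directedness of $\Lambda$ shows this value is independent of the choice of $\alpha$ and of $\eta$, so $F$ is a well-defined linear functional with $F\circ\iota_\alpha=f_\alpha$ for every $\alpha$. The one point requiring the topology, and the place I expect to have to be careful, is the \emph{continuity} of $F$: since $\mathcal H$ carries the finest locally convex topology for which all the $\iota_\alpha$ are continuous, a linear functional on $\mathcal H$ is continuous if and only if each restriction $F\circ\iota_\alpha$ is continuous on $\mathcal H_\alpha$ --- and here those restrictions are precisely the $f_\alpha\in\mathcal H_\alpha^{*}$, continuous by assumption. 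Hence $F\in\mathcal H^{*}$ with $\Phi(F)=(f_\alpha)_\alpha$, so $\Phi$ is a linear isomorphism, which is the assertion. I do not anticipate a genuine obstacle: the only care needed is the representative bookkeeping in the definition of $F$ (using injectivity of the system and directedness of $\Lambda$) and invoking the initial-topology/universal property of the inductive limit for continuity rather than attempting a direct estimate; alternatively, one may simply cite Lemma~\ref{dualdir} with $\mathcal V_\alpha=\mathcal H_\alpha$ and forget the extra topological data.
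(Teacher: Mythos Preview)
Your proposal is correct and follows essentially the same approach as the paper: the paper simply invokes Lemma~\ref{dualdir} and writes down the inverse of your map $\Phi$ explicitly (via Riesz representatives), namely $\{\langle\,\cdot\,,h_\alpha\rangle\}_\alpha\mapsto\bigl(k_\beta\mapsto\langle h_\beta,k_\beta\rangle\bigr)$, without spelling out the well-definedness, injectivity, and continuity checks that you carry out. Your more detailed verification is a faithful unfolding of that citation.
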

	\begin{proof}
		The statement follows from the duality in Lemma \ref{dualdir}. The map $\varprojlim \mathcal H_\alpha^*\longrightarrow(\varinjlim \mathcal H_\alpha)^*$ defined as $$\{\left<\cdot,h_\alpha\right>\}_{\alpha\Lambda}\bigl( k_\beta\bigr)=\left<h_\beta,k_\beta\right>$$ is a bijection, where $\{\left<\cdot,h_\alpha\right>\}_{\alpha\Lambda}\in \varprojlim\mathcal H_\alpha^*$ and $k_\beta\in \varinjlim\mathcal H_\alpha$.
	\end{proof}
	The strong dual topology on $\mathcal H^*$ is distinct from the inverse limit topology. In this article the default topology on dual spaces is the strong dual topology. Since $\mathcal H$ is barrelled, the map $J:\mathcal H\to \mathcal H^{**}$ is a topological linear isomorphism on its image. If $\mathcal H$ is a locally Hilbert space which is direct limit of a countable system of Hilbert spaces, then its strong dual $\mathcal H^*$  is a Fr\'echet space, because $\mathcal H$ being a countable direct limit of Hilbert space is a DF-space \cite[Th. 12.4.8]{Jarchow} and the strong dual of a DF-space is a Fr\'echet space. Further the countable direct limit locally Hilbert space $\mathcal H$ is reflexive, i.e., $J(\mathcal H)=\mathcal H^{**}$ \cite[Prop. 11.4.5(e)]{Jarchow}. \\
	A locally Hilbert space $\mathcal H$ is also equipped with a natural inner product structure $\left<h_\alpha, h_\beta\right>=\left<h_\alpha,h_\beta\right>_{\mathcal H_\gamma}$, where $\gamma\geq \alpha, \beta$. The norm topology on $\mathcal H$ with respect to this inner product is weaker than the direct/inductive limit topology.
	\subsection{Locally multiplicatively convex algebras}
	
	\begin{definition}
		An algebra $\mathcal A$ is called locally m-convex (lmc) algebra if there exists a family $\{P_j\}_{j\in J}$ of seminorms defined on $\mathcal A$ such that \begin{enumerate}
			\item $\{P_j\}$ defines Hausdorff locally convex topology on $\mA$.
			\item $P_j(xy)\leq P_j(x)P_j(y)$ for each $x,y\in \mA$ and each $j\in J$.
		\end{enumerate}		
	
		A $*$-algebra $\mA$ is called locally m-convex (lmc) $*$-algebra if it is a lmc-algebra and $P_j(x^*)=P_j(x)$ for each $x\in \mA$ and each $j\in J$.	A complete lmc $*$-algebra $\mA$ is called a \say{locally $C^*$-algebra} if 
		\begin{enumerate}[resume]
			\item $P_j(x^*x)=P_j(x)^2$ for each $x\in\mA$ and each $j\in J$.
		\end{enumerate}
	\end{definition}
	We have the following nice characterization of locally $C^*$-algebras due to A. Inoue.
	\begin{theorem}\label{lcd}
		Every locally $C^*$-algebra $\mathcal A$ is topologically $*$-isomorphic to the inverse limit of an reduced inverse system of $C^*$-algebras. \cite{Inoue}
	\end{theorem}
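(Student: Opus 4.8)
The plan is to realize $\mathcal{A}$ as the inverse limit of the $C^*$-algebras obtained by completing the quotients of $\mathcal{A}$ by the kernels of its defining $C^*$-seminorms. First I would arrange, without loss of generality, that the defining family $\{P_j\}_{j\in J}$ is directed: for a finite set $F\subseteq J$ the seminorm $\max_{j\in F}P_j$ is again submultiplicative, $*$-invariant, and satisfies the $C^*$-identity (since $\max_j P_j(x^*x)=\max_j P_j(x)^2=(\max_j P_j(x))^2$), so replacing $\{P_j\}$ by the collection of such maxima changes neither the topology nor the hypotheses. For each $j$ put $N_j=\{x\in\mathcal{A}:P_j(x)=0\}$; submultiplicativity and the $C^*$-identity show $N_j$ is a closed two-sided $*$-ideal, the seminorm $P_j$ descends to a genuine norm on $\mathcal{A}/N_j$ satisfying $\|x+N_j\|^2=\|(x+N_j)^*(x+N_j)\|$, with isometric involution and submultiplicative multiplication; hence the completion $\mathcal{A}_j:=\widetilde{\mathcal{A}/N_j}$ is a $C^*$-algebra.

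Next I would build the inverse system. For $j\leq k$ (meaning $P_j\leq P_k$) one has $N_k\subseteq N_j$, so the identity on $\mathcal{A}$ induces a contractive $*$-homomorphism $\mathcal{A}/N_k\to\mathcal{A}/N_j$, which extends uniquely to a contractive (hence continuous) $*$-homomorphism $f_{jk}:\mathcal{A}_k\to\mathcal{A}_j$, and the cocycle identity $f_{jk}\circ f_{kl}=f_{jl}$ holds already on the dense subalgebra $\mathcal{A}/N_l$. This system is reduced because $f_{jk}(\mathcal{A}_k)\supseteq\mathcal{A}/N_j$, which is dense in $\mathcal{A}_j$. Now consider the canonical map $\Phi:\mathcal{A}\to\varprojlim\mathcal{A}_j$, $x\mapsto(x+N_j)_j$: it is well defined since $(x+N_j)_j$ is compatible under the $f_{jk}$, and it is a $*$-homomorphism because each quotient map is. Injectivity is Hausdorffness: $\Phi(x)=0$ forces $P_j(x)=0$ for all $j$, so $x=0$. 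Moreover the initial (inverse-limit) topology of $\varprojlim\mathcal{A}_j$ pulled back along $\Phi$ is generated by the seminorms $x\mapsto\|x+N_j\|_{\mathcal{A}_j}=P_j(x)$, i.e. exactly the original topology of $\mathcal{A}$; hence $\Phi$ is a topological $*$-isomorphism onto its image.

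It remains to prove $\Phi$ is onto, and I would do this by showing $\Phi(\mathcal{A})$ is simultaneously dense and closed in $\varprojlim\mathcal{A}_j$. For density, fix $(a_j)_j\in\varprojlim\mathcal{A}_j$ and a basic neighbourhood determined by indices $j_1,\dots,j_n$ and $\varepsilon>0$; choosing $j_0\geq j_1,\dots,j_n$ and using density of $\mathcal{A}/N_{j_0}$ in $\mathcal{A}_{j_0}$, pick $x\in\mathcal{A}$ with $\|x+N_{j_0}-a_{j_0}\|_{\mathcal{A}_{j_0}}<\varepsilon$; since each $f_{j_i j_0}$ is contractive and $f_{j_i j_0}(a_{j_0})=a_{j_i}$, we get $\|x+N_{j_i}-a_{j_i}\|_{\mathcal{A}_{j_i}}<\varepsilon$ for all $i$, so $\Phi(x)$ lies in the chosen neighbourhood. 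For closedness, $\mathcal{A}$ is complete and $\Phi$ is a topological isomorphism onto its image, so $\Phi(\mathcal{A})$ is a complete (hence closed) subspace of the Hausdorff space $\varprojlim\mathcal{A}_j$. Therefore $\Phi(\mathcal{A})=\varprojlim\mathcal{A}_j$ and $\Phi$ is the desired topological $*$-isomorphism. The main obstacle is precisely this last surjectivity step — everything else is bookkeeping — and it is where completeness of $\mathcal{A}$ is genuinely used; the one point requiring care is that the approximant $x$ be chosen uniformly for all finitely many relevant indices at once, which is exactly what contractivity of the connecting maps $f_{jk}$ provides.
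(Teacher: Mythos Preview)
Your proposal is correct and follows exactly the construction the paper sketches immediately after the theorem statement (the paper itself does not give a proof but cites Inoue and then describes the quotients $\mathcal{A}_j=\overline{\mathcal{A}/N_j}$ with $N_j=\ker P_j$ and the connecting $*$-homomorphisms $\pi_{ij}$). Your argument simply fills in the details of that sketch, including the surjectivity step via density plus completeness, which is the standard route.
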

	In fact if $\mathcal A$ is a locally $C^*$-algebra with associated seminorms $\{P_j\}_{j\in J}$, then for each $j$ the closure of the space ${\mathcal A_j}=\frac{\mathcal A}{N_j}$ is a $C^*$-algebra, where $N_j=\ker P_j$ for each $j$. Further, for $i<j$ there is a natural $*$-homomorphism $\pi_{ij}:\bar{A_j}\to \bar{A_i}$ such that $\pi_{ik}\circ\pi_{kj}=\pi_{ij}$ for $i\leq k\leq j$. Then $\mathcal A=\underset{\longleftarrow}{\lim }\bar{\mathcal A_j}$ (inverse limit is equipped with the initial topology and pointwise multiplication).
	
	Throughout this article we shall equip the the continuous dual $\mathcal V^{*}$ and bidual $\mathcal V^{**}$ of a topological vector space $\mathcal V$ with the strong dual topology $\tau (\mathcal V^*,\mathcal V)$ and $\tau(\mathcal V^{**},\mathcal V^*)$ respectively, (i.e., the topology of convergence on fundamental system of bounded sets of $\mathcal V$ and $\mathcal V^*$ respectively and usually also denoted by $\tau_b$). A set $X\subset \mathcal V$ is said to be \emph{(von Neumann)} bounded if for each $0$-neighbourhood $V$ in $\mathcal V$, there exists a $r_0>0$ such that $X\subset rV$ for all $r\geq r_0$. In case when $\mathcal V$ is locally convex, a set $X$ is bounded iff there exists $M_p>0$ for each continuous seminorm $p$ on $\mathcal V$ such that $p(X)\subset [0,M_p]$.
	\begin{lemma}
		Let $\mathcal V$ be a topological vector spaces and $J:\mathcal V\to \mathcal V^{**}$ denote the evaluation map. Then $J$ is injective. If $\mathcal V$ is quasibarrelled, then $J$ is an isomorphic embedding. $J(A)$ is $\sigma(\mathcal V^{**},\mathcal V^*)$ dense in $\mathcal V^{**}$. In-fact for every $v^{**}\in \mathcal V^{**}$, there exists a bounded net (all elements lie in a bounded set) $\{v_\alpha\}$ which converges to $v^{**}$ in the $\sigma(\mathcal V^{**},\mathcal V^*)$ topology. \cite[23.4.4]{Kothe}
	\end{lemma}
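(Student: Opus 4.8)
The plan is to obtain all four assertions from duality theory --- the Hahn--Banach separation theorem and the bipolar theorem --- applied to the canonical pairings $\langle\mathcal V,\mathcal V^*\rangle$ and $\langle\mathcal V^*,\mathcal V^{**}\rangle$. Injectivity of $J$ is immediate: if $0\neq v\in\mathcal V$ then Hahn--Banach supplies $f\in\mathcal V^*$ with $f(v)\neq 0$, and $J(v)(f)=f(v)\neq 0$. The $\sigma(\mathcal V^{**},\mathcal V^*)$-density of $J(\mathcal V)$ is equally short: the dual of $\bigl(\mathcal V^{**},\sigma(\mathcal V^{**},\mathcal V^*)\bigr)$ is exactly $\mathcal V^*$, so if $J(\mathcal V)$ were not dense there would be $0\neq f\in\mathcal V^*$ with $f(v)=J(v)(f)=0$ for every $v$, forcing $f=0$.

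Assume now $\mathcal V$ is quasibarrelled; I would show $J$ is continuous and open onto its range. A base of $0$-neighbourhoods of $\mathcal V^{**}$ in the strong topology consists of the polars $B^{\circ}$, taken in $\mathcal V^{**}$, of strongly bounded sets $B\subset\mathcal V^*$, and $J^{-1}(B^{\circ})=\{v\in\mathcal V:|f(v)|\le 1\ \forall f\in B\}$ is the pre-polar of $B$ in $\mathcal V$; quasibarrelledness says every strongly bounded $B$ is equicontinuous, so this pre-polar is a $0$-neighbourhood of $\mathcal V$, proving continuity of $J$. For openness, pick a closed absolutely convex $0$-neighbourhood $U\subset\mathcal V$: then $U^{\circ}\subset\mathcal V^*$ is equicontinuous, hence strongly bounded, so $W:=(U^{\circ})^{\circ}$, the polar in $\mathcal V^{**}$, is a $0$-neighbourhood of $\mathcal V^{**}$, and by the bipolar theorem $J^{-1}(W)=U^{\circ\circ}=U$; together with injectivity this gives $J(U)=W\cap J(\mathcal V)$. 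Hence $J$ is an isomorphic embedding.

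For the final claim I would argue as in the Goldstine theorem. Given $v^{**}\in\mathcal V^{**}=(\mathcal V^*,\tau_b)^*$, continuity of $v^{**}$ on $\mathcal V^*_{\tau_b}$ produces (after absorbing a scalar into it) a bounded set $B\subset\mathcal V$ with $|v^{**}(f)|\le\sup_{v\in B}|f(v)|$ for all $f\in\mathcal V^*$, i.e.\ $v^{**}\in B^{\circ\circ}$, the bipolar of $B$ computed in $\mathcal V^{**}$. Since $B$ is bounded, $B^{\circ}$ is a $0$-neighbourhood of $\mathcal V^*_{\tau_b}$, so $B^{\circ\circ}$ is equicontinuous in $(\mathcal V^*_{\tau_b})^*=\mathcal V^{**}$, hence a bounded subset of $\mathcal V^{**}$ for its strong topology. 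On the other hand the bipolar theorem identifies $B^{\circ\circ}$ with the $\sigma(\mathcal V^{**},\mathcal V^*)$-closure of $J(\operatorname{aco}B)$. Thus $v^{**}$ is the $\sigma(\mathcal V^{**},\mathcal V^*)$-limit of a net $\{J(v_\alpha)\}$ with every $v_\alpha$ lying in the bounded set $\operatorname{aco}B\subset\mathcal V$, as asserted.

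The main obstacle I anticipate is bookkeeping rather than depth: one must keep careful track of which pairing each polar is formed in, and --- more substantively --- invoke \emph{equicontinuous} rather than merely \emph{strongly bounded} subsets of $\mathcal V^*$ at exactly the right points. The hypothesis of quasibarrelledness is used precisely once, to upgrade ``strongly bounded'' to ``equicontinuous'' in the continuity half of the embedding statement; the openness half, the density, and the bounded-net statement do not require it. The one small fact that should not be skipped is that the polar in $\mathcal V^{**}$ of a $0$-neighbourhood of $\mathcal V^*_{\tau_b}$ is bounded in $\mathcal V^{**}$ --- equicontinuous subsets of a dual being bounded for the topology of uniform convergence on bounded sets --- since this is what makes the approximating net genuinely bounded.
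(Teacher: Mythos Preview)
The paper does not supply its own proof of this lemma; it is simply quoted with a citation to K\"othe. Your argument is correct and is the standard textbook route---Hahn--Banach for injectivity and density, the characterization of quasibarrelledness via equicontinuity of strongly bounded subsets of $\mathcal V^*$ for the isomorphic embedding, and a bipolar/Goldstine argument for the bounded-net approximation. One minor caveat: your injectivity step needs $\mathcal V^*$ to separate points of $\mathcal V$, which requires local convexity (not merely Hausdorffness, which is all the paper explicitly assumes); in the context of the paper and of K\"othe's framework this is implicit, but it is worth stating.
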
 
	
	\begin{definition}
		A \emph{Fr\'echet space} is a complete Hausdorff locally convex topological vector space whose topology is generated by a countable family of seminorms.
	\end{definition}
	Fr\'echet spaces are metrizable. In fact, if $\mathcal V$ is a a Fr\'echet space with associated countable family of seminorms $\{P_i\}_{i\in \mathbb N}$, then the metric $$d(v_1,v_2)=\sum_{i=1}^\infty\frac{1}{2^i} \frac{P_i(v_1-v_2)}{1+P_i(v_1-v_2)}$$ generates the topology of $\mathcal V$.	We have the following well known implications for locally convex topological vector spaces (see \cite{Treves, Jarchow} for proofs).
	$$\emph{\text{Fr\'echet}}\implies Barrelled\implies Quasibarrelled.$$
	Not all locally $C^*$-algebras are quasibarrelled. The Nachbin-Shirota Theorem (\cite{Shirota}, \cite{Nachbin}) states that the locally $C^*$-algebra $C(X)_\mathcal B$ is Barrelled iff every closed and $C(X)$-bounded subset is compact. Since $C(X)_\mathcal B$ is sequentially complete, it is barrelled if and only if it is quasibarrelled \cite[27.2]{Kothe}.\\
	
	In the section 4, we shall be concerned with Fr\'echet locally $C^*$-algebras, which are locally $C^*$-algebras with underlying topological vector spaces as Fr\'echet spaces. This means that the associated family of seminorms which generates its topology is of  countable cardinality. Theorem \ref{lcd} implies that every Fr\'echet locally $C^*$-algebra $\mathcal A$ is the inverse limit of countable family of $C^*$-algebras $\{\mathcal A_i\}_{i\in \mathbb N}$, i.e., $\mathcal A=\varinjlim \mathcal A_i$.
	
	\subsection{Arens product on bidual of topological algebras.}\label{apd}
	
	Let $\mathcal A$ be a topological algebra with hypocontinuous multiplication (see \cite{Filali},\cite{Gulick} for definition). We will always equip the continuous dual $\mathcal A^*$ (and $\mathcal A^{**}$) with the strong dual topology $\tau_b$ (topology of uniform convergence on bounded sets). Notice that 
	$\tau_b$ is a locally convex topology on $\mathcal A^*$ generated by seminorms $$|f|_B=\sup_{a\in B}|f(a)|$$ where $B$'s are bounded subsets of $\mathcal A$. For any $f\in \mathcal A^*$ and $a\in \mathcal A$ we define functionals ${}_{a}f,f_a:\mathcal A\to \mathbb C$ as $$f_a(b)=f(ab),~{}_{a}f(b)=f(ba)$$ for each $b\in \mathcal A$. From \cite[Lemma 3.1]{Gulick}, the functional ${}_{a}f,f_a\in \mathcal A^*$ and the maps $$a\mapsto f_a~\text{and}~a\mapsto {}_{a}f$$ are continuous map from $\mathcal A$ to $\mathcal A^*$ with respect to the $\tau_b$ topology on $\mathcal A^*$. Similarly, for $f\in \mathcal A^{*}$ and $u\in \mathcal A^{**}$, we define functionals ${}_{u}f,f_u:\mathcal A^*\to \mathbb C$ as $$f_u(b)=u({}_{b}f)~\text{and}~{}_{u}f(b)=u(f_b)$$ for each $b\in \mathcal A$. From \cite[Lemma 3.2]{Gulick}, the functionals ${}_{u}f,f_u\in \mathcal A^*$ for each $u\in \mathcal A^{**}$ and $f\in \mathcal A^*$. Further, the maps $$f\mapsto {}_{u}f~\text{and}~f\mapsto f_u$$ are continuous maps from $\mathcal A^*$ to $\mathcal A^*$ with respect to the strong dual topology $\tau_b$. Finally we define two multiplications $\square$ and $\diamond$ on $\mathcal A^{**}$ as $$u\square v(f)=u({}_{v}f)~\text{and}~u\diamond v(f)=v(f_u)$$ for $u,v\in \mathcal A^{**}$ and $f\in \mathcal A^*$. Due to \cite[Lemma 3.4]{Gulick}, the functionals $u\square v, u\diamond v\in \mathcal A^{**}$ and the maps $u\mapsto u\square v$ and $v\mapsto u\diamond v$ are $\sigma(\mathcal A^{**},\mathcal A^*)$-continuous from $\mathcal A^{**}$ to $\mathcal A^{**}$.	
	\section{ Kaplanski density property in locally $C^*$-algebras}
	Throughout this section $\mathcal A$ will denote a locally $C^*$-algebra. Due to A. Inoue (\cite[Th. 5.1]{Inoue}), $\mathcal A$ is $*$-isomorphic to some $\ast$-subalgebra of $B_{loc}(\mathcal H)$ where $\mathcal H=\coprod H_\alpha$ is a locally Hilbert space and $B_{loc}(\mathcal H)$ denotes the space of locally bounded coherent linear operators (i.e., $T\in B_{loc}(\mathcal H)$ iff $T_{|\alpha}$ is a bounded linear operator for each $\alpha$. See \cite[II.7.5]{Maria2} for details). $B_{loc}(\mathcal H)$ can be considered as a topological subalgebra of $\varprojlim B(\mathcal H_\alpha)$. In fact if $\mathcal A=\underset{\longleftarrow}{\lim}\mathcal A_\alpha$, where $\{\mathcal A_\alpha\}_{\alpha\in \Lambda}$ is a family of $C^*$-algebras, then each $C^*$-algebra $\mathcal A_\alpha$ has an universal representation $\pi_\alpha:\mathcal A_\alpha\to B(\mathcal H_\alpha)$ on some Hilbert space $\mathcal H_\alpha$. Now for each $\alpha\in \Lambda$, define a Hilbert space $\mathcal K_\alpha=\bigoplus_{\gamma\leq \alpha}\mathcal H_\alpha$ and a map $\overline{\pi}_\alpha:\mathcal A\to B(\mathcal K_\alpha)$ as $\overline{\pi}_\alpha(a)=\oplus_{\gamma\leq \alpha} \pi_\gamma(a_\gamma)$. Now consider the locally Hilbert space $\mathcal H=\coprod_{\alpha\in \Lambda}\mathcal K_\alpha$. Then the map $\pi:\mathcal A\to B_{loc}(\mathcal H)$ defined as $\pi(a)(h_\alpha)=\overline{\pi}_\alpha(a)(h_\alpha)$ is an $*$-isomorphism of $\mathcal A$ onto its image. The associated seminorms on $\mathcal A$ are given by $P_\alpha(a)=\|\overline{\pi}_\alpha(a)\|$. This representation $\pi$ will be called as universal representation of $\mathcal A$. (See \cite[II.8.5]{Maria} for complete details.) $\mathcal A$ is a Fr\'echet locally $C^*$-algebra if the index set $\Lambda$ is countable.
	One can easily spot the identification of each $\mathcal A_\alpha$ with $\mathcal A_\alpha=\overline{\overline{\pi}_\alpha(\mathcal A)}$. The explicit description of the $C^*$-algebra $\mathcal A_\alpha$ can be given as the closure of quotient space $\frac{\varprojlim \mathcal A_\alpha}{\sim}$ with respect to the norm $p([(a)])=||a_\alpha||_\alpha$, where $\sim$ denotes the equivalence relation $(a)\sim (b)$ iff $a_\alpha=b_\alpha$.\\
	
	Just like $W^*$-algebras, the locally $C^*$-algebra $B_{loc}(\mathcal H)$ also comes equipped with a bunch of important topologies. We will be mostly concerned with \emph{weak operator topology}(WOT) and \emph{strong operator topology}(SOT) on $B_{loc}(\mathcal H)$ and will adhere to their definitions in \cite[3, Def. 3.1]{Joita}. The same notion has been used in \cite{Maria}.\\
	
	We shall now establish some density results for locally $C^*$-algebras. The techniques are similar to the $C^*$-algebra case with slight modifications adapted in definitions and proofs.
	
	\begin{lemma}
		Let $\mathfrak B$ be a family of compact subsets of $\mathbb R$ such that $\mathfrak B$ contains a neighborhood of each $x\in \mathbb R$ and every finite union of members of $\mathfrak B$ is contained in an another member set of $\mathfrak B$. Denote by $C_0(\mathbb R)_{\mathfrak B}$ the {\emph{lmc}} $*$-algebra of all continuous functions vanishing at infinity, equipped with the topology of uniform convergence on sets in $\mathfrak B$, denoted by $\tau_\mathfrak B$. If $\mathcal S$ is a point-separating subset of $C_0(\mathbb R)$ then the $*$-subalgebra generated by $\mathcal S$ is dense in $C_0(\mathbb R)_\mathfrak B$.
	\end{lemma}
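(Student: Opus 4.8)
The plan is to deduce the statement from the classical (locally compact) Stone--Weierstrass theorem and then observe that uniform density is, a fortiori, density in the coarser topology $\tau_{\mathfrak B}$. Write $\mathcal B_0$ for the $*$-subalgebra of $C_0(\mathbb R)$ generated by $\mathcal S$; since the involution on $C_0(\mathbb R)$ is pointwise complex conjugation, $\mathcal B_0$ is a conjugation-closed subalgebra. The topology $\tau_{\mathfrak B}$ is generated by the seminorms $q_K(g)=\sup_{x\in K}|g(x)|$ for $K\in\mathfrak B$, and since each $K$ is compact (hence bounded) we have $q_K(g)\le\|g\|_\infty$ for every $g$; thus $\tau_{\mathfrak B}$ is coarser than the uniform-norm topology on $C_0(\mathbb R)$, and it suffices to prove that $\mathcal B_0$ is $\|\cdot\|_\infty$-dense in $C_0(\mathbb R)$.

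To do this I would pass to the one-point compactification $\mathbb R_\infty=\mathbb R\cup\{\infty\}$, identifying $C_0(\mathbb R)$ with the maximal ideal $\{h\in C(\mathbb R_\infty):h(\infty)=0\}$ and extending every element of $\mathcal S$ by the value $0$ at $\infty$. Then $\mathcal B_1:=\mathcal B_0+\mathbb C\cdot 1$ is a unital conjugation-closed subalgebra of $C(\mathbb R_\infty)$. It separates the points of $\mathbb R_\infty$: two distinct points of $\mathbb R$ are separated already by $\mathcal S$, and a point $x\in\mathbb R$ is separated from $\infty$ precisely because some $f\in\mathcal S$ has $f(x)\neq 0$ --- this is the non-vanishing information that ``point-separating'' must be taken to encode (see the remark below). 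The ordinary unital Stone--Weierstrass theorem then gives that $\mathcal B_1$ is $\|\cdot\|_\infty$-dense in $C(\mathbb R_\infty)$. Given $f\in C_0(\mathbb R)$ and $\varepsilon>0$, choose $g\in\mathcal B_0$ and $\lambda\in\mathbb C$ with $\|f-g-\lambda 1\|_{\infty,\mathbb R_\infty}<\varepsilon/2$; evaluating at $\infty$, where $f$ and $g$ both vanish, yields $|\lambda|<\varepsilon/2$, whence $\|f-g\|_{\infty,\mathbb R}\le\|f-g-\lambda 1\|_{\infty,\mathbb R}+|\lambda|<\varepsilon$. Therefore $\mathcal B_0$ is uniformly dense in $C_0(\mathbb R)$, and hence $\tau_{\mathfrak B}$-dense, which is the assertion.

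The step that needs care --- the only real obstacle --- is the meaning of ``point-separating''. A subset of $C_0(\mathbb R)$ that separates the points of $\mathbb R$ need not be non-vanishing at every point; if every $f\in\mathcal S$ vanished at some $x_0$, the subalgebra $\mathcal B_0$ would lie inside the maximal ideal $\{h:h(x_0)=0\}$, which is $\tau_{\mathfrak B}$-closed (point evaluation at $x_0$ is $\tau_{\mathfrak B}$-continuous, since $x_0$ lies in some $K\in\mathfrak B$) and proper, so the conclusion would fail. Accordingly I read the hypothesis in the strong sense --- $\mathcal S$ separates the points of the one-point compactification $\mathbb R_\infty$, equivalently $\mathcal S$ separates points of $\mathbb R$ and has no common zero --- which is exactly what the $C_0$-version of Stone--Weierstrass requires and what is surely intended. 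Apart from this, nothing is subtle: no completeness of $C_0(\mathbb R)_{\mathfrak B}$ is used, and the structural hypotheses on $\mathfrak B$ (covering $\mathbb R$, closure under finite-union domination) serve only to make $\tau_{\mathfrak B}$ Hausdorff and its defining seminorm family directed --- the density itself already holds in the finer uniform topology.
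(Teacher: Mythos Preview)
Your argument is correct (under the strong reading of ``point-separating'' you isolate, which is indeed necessary for the statement to hold), but it follows a genuinely different route from the paper's. The paper argues \emph{locally}: given $f$ and a basic $\tau_{\mathfrak B}$-neighbourhood determined by finitely many $B_1,\dots,B_n\in\mathfrak B$ and $\epsilon>0$, it invokes the finite-union hypothesis to find a single compact $B\in\mathfrak B$ containing $\bigcup B_i$, restricts everything to $B$, and applies the compact Stone--Weierstrass theorem on $C(B)$ to approximate $f|_B$ by a polynomial in the restrictions $\mathcal S|_B\cup\mathcal S^*|_B$; this polynomial is then the restriction of an element of the $*$-subalgebra generated by $\mathcal S$. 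You instead establish the stronger assertion that $\mathcal B_0$ is already $\|\cdot\|_\infty$-dense in $C_0(\mathbb R)$, via the one-point compactification and a clean $\varepsilon/2$ argument to strip the constant, and then simply observe that $\tau_{\mathfrak B}$ is coarser than the sup-norm topology. Your approach is more economical and, as you note, shows that the directedness hypothesis on $\mathfrak B$ plays no role in the density itself; the paper's approach keeps the seminorm structure in the foreground and makes explicit use of that hypothesis, but (as you also correctly diagnose) it tacitly needs the same no-common-zero condition --- otherwise Stone--Weierstrass on $C(B)$ produces polynomials with constant term, and these do not lift back into $C_0(\mathbb R)$.
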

	\begin{proof}
		Let $f\in C_0(\mathbb R)$ be any arbitrarily fixed function and $V$ be any arbitrary open (basic) neighbourhood of $f$. Then $$V=\bigcap_{i=1}^n\bigl\{g\in C_0(\mathbb R)~:~\|g-f\|_{B_i}<\epsilon\bigr\}$$
		for some $\epsilon>0$ and $B_1,..,B_n\in \mathfrak B$ (here $\|f\|_{B_i}=\sup_{B_i}|f(x)|$ for each $i$.). Fix a $B\in \mathfrak B$ such that $\bigcup_{i=1}^n B_i\subset B$. Let $\mathcal S_{B}$ denotes the collection of functions in $\mathcal S$ restricted to set $B$.  Since, $\mathcal S$ separates points in $\mathbb R$, the family of functions $\mathcal S_B$ separates points in $B$. Hence, using the classical Stone-Weierstrass Theorem, we can find a continuous function $g:B\to \mathbb C$ such that $\|g-f\|_B<\epsilon$ and $g$ is a polynomial of functions in $\mathcal S_B\cup \mathcal S_B^*$, i.e., $$g=P(g_1,..,g_r)$$ for some $g_1,..,g_r\in \mathcal S_B\cup \mathcal S_B^*$. Clearly, each $g_i={g_i}'_{|B}$ for some $g_i'\in \mathcal S\cup S^*$. In fact $$g=P(g_1,..,g_r)=P(g_1',..,g_r')_{|B}.$$
		Then, clearly $g'=P(g_1',..,g_r')\in V$. Hence, every open neighbourhood of an arbitrary element $f\in C_0(\mathbb R)_{\mathfrak B}$ contains an element of $*$-subalgebra generated by $\mathcal S$. This concludes that the $*$-subalgebra generated by $\mathcal S$ is dense in $C_0(\mathbb R)_{\mathfrak B}$
	\end{proof}
	\begin{lemma}\label{invocont}
		Let $\mathcal H$ be a locally Hilbert space. The involution $T\mapsto T^*$ is SOT continuous when restricted to set of normal operators on $B_{loc}(\mathcal H)$.
	\end{lemma}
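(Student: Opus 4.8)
The plan is to reduce the statement to the classical fact that, on a single Hilbert space, the adjoint operation is strongly continuous on the set of normal operators, and then to transfer this along the defining strong seminorms of $B_{loc}(\mathcal{H})$. Recall (see \cite{Joita}) that the SOT on $B_{loc}(\mathcal{H})$ is generated by the seminorms $T\mapsto \|T\xi\|$, $\xi\in\mathcal H$, where $\|\cdot\|$ is the norm of the canonical inner product of the locally Hilbert space $\mathcal H=\coprod_\alpha \mathcal H_\alpha$. The first step is the bookkeeping observation that this topology is ``local'': every $\xi\in\mathcal H$ lies in some $\mathcal H_\alpha$, and for coherent $T\in B_{loc}(\mathcal H)$ one has $T\xi=T_\alpha\xi\in\mathcal H_\alpha$ with $\|T\xi\|=\|T_\alpha\xi\|_{\mathcal H_\alpha}$ by compatibility of the inner products; hence $N_i\to N$ in SOT iff $(N_i)_\alpha\to N_\alpha$ strongly in $B(\mathcal H_\alpha)$ for every $\alpha$. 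Likewise, $N\in B_{loc}(\mathcal H)$ is normal precisely when each component $N_\alpha$ is normal, and $(N^*)_\alpha=(N_\alpha)^*$, again directly from the coherence built into the definition of $B_{loc}(\mathcal H)$.

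Next, let $\{N_i\}$ be a net of normal operators in $B_{loc}(\mathcal H)$ converging in SOT to a normal $N$. Fix $\xi\in\mathcal H$, say $\xi\in\mathcal H_\alpha$, and put $A_i:=(N_i)_\alpha$ and $A:=N_\alpha$, which are normal operators on the Hilbert space $\mathcal H_\alpha$ with $A_i\to A$ strongly. I would then run the standard vectorwise estimate inside $\mathcal H_\alpha$: by normality $\|A_i^*\xi\|=\|A_i\xi\|\to\|A\xi\|=\|A^*\xi\|$, and, expanding
$\|(A_i^*-A^*)\xi\|^2=\|A_i^*\xi\|^2-2\,\mathrm{Re}\langle\xi,A_iA^*\xi\rangle+\|A^*\xi\|^2$
and applying the strong convergence $A_i\to A$ to the fixed vector $A^*\xi$, the middle term tends to $\langle\xi,AA^*\xi\rangle=\|A^*\xi\|^2$. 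Hence $\|(A_i^*-A^*)\xi\|^2\to 0$. Translating back via the first step, $\|(N_i^*-N^*)\xi\|=\|(A_i^*-A^*)\xi\|_{\mathcal H_\alpha}\to 0$ for every $\xi\in\mathcal H$, i.e.\ $N_i^*\to N^*$ in SOT, which is the claim.

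I do not expect a genuine obstacle here. The only points needing care are the two ``locality'' reductions in the first step (that the strong seminorms of $B_{loc}(\mathcal H)$ restrict to the usual strong seminorms on each $B(\mathcal H_\alpha)$, and that normality and adjunction pass to and from the components $N_\alpha$), both of which are immediate from the coherence of elements of $B_{loc}(\mathcal H)$; once these are in place the argument is just the scalar computation above. Note in particular that no boundedness of the net $\{N_i\}$ is required, since the estimate is entirely vectorwise, and that normality of the \emph{limit} $N$ is used (and available, since we restrict the map to the set of normal operators and hence test continuity only along nets and limits within that set).
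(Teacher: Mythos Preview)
Your proof is correct and follows essentially the same route as the paper's: the paper simply writes ``Follows directly from \cite[Thm.~4.3.1]{Murphy}'', which amounts to exactly the componentwise reduction you spell out---observe that SOT on $B_{loc}(\mathcal H)$ is determined by vectors lying in the individual $\mathcal H_\alpha$, that normality and adjunction pass to the restrictions $T_\alpha$, and then invoke the classical single-Hilbert-space fact. You merely unpack the classical argument in full (the vectorwise computation using $\|A_i^*\xi\|=\|A_i\xi\|$) rather than citing it, so your version is more self-contained but not genuinely different.
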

	\begin{proof}
	Follows directly from \cite[Thm. 4.3.1]{Murphy}
	\end{proof}
	\begin{lemma}\label{bddsot}
		If $B$ is a (von Neumann) bounded subset of $B_{loc}(\mathcal H)$, then the map $$B\times B_{loc}(\mathcal H)\to B_{loc}(\mathcal H),~~(S,T)\mapsto S\cdot T$$ is SOT continuous. 
	\end{lemma}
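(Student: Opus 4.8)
The plan is to reduce the assertion to the classical fact that multiplication is jointly strongly continuous on norm-bounded subsets of $B(\mathcal K)$, applied level by level. Recall that the $\mathrm{SOT}$ on $B_{loc}(\mathcal H)$ is the locally convex topology generated by the seminorms $p_\xi(T)=\|T\xi\|$ for $\xi\in\mathcal H$; since $\mathcal H=\coprod_\alpha H_\alpha$, every such $\xi$ lies in some $H_\alpha$, and because the operators in $B_{loc}(\mathcal H)$ are coherent one has $T(H_\alpha)\subseteq H_\alpha$ with $T_{|\alpha}:=T|_{H_\alpha}\in B(H_\alpha)$. Recall also that $B_{loc}(\mathcal H)$, viewed as a topological subalgebra of $\varprojlim B(H_\alpha)$, carries the projective topology generated by the seminorms $P_\alpha(T)=\|T_{|\alpha}\|$. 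Hence if $B\subseteq B_{loc}(\mathcal H)$ is (von Neumann) bounded, then for each $\alpha$ there is a constant $M_\alpha>0$ with $\sup_{S\in B}\|S_{|\alpha}\|\le M_\alpha$; this is the only point at which the hypothesis on $B$ is used.

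Next I would verify joint continuity by means of nets. Let $(S_i,T_i)_i$ be a net in $B\times B_{loc}(\mathcal H)$ converging to $(S,T)$, i.e. $S_i\to S$ in $\mathrm{SOT}$ (with $S\in B$) and $T_i\to T$ in $\mathrm{SOT}$. Fix $\xi\in\mathcal H$ and pick $\alpha$ with $\xi\in H_\alpha$. Coherence keeps all vectors involved inside $H_\alpha$, so we may write
\[
(S_iT_i-ST)\xi \;=\; S_i\bigl((T_i-T)\xi\bigr)\;+\;(S_i-S)\bigl(T\xi\bigr),
\]
whence, working in the Hilbert space $H_\alpha$,
\[
\|(S_iT_i-ST)\xi\|\;\le\;\|S_{i|\alpha}\|\,\|(T_i-T)\xi\|\;+\;\|(S_i-S)(T\xi)\|\;\le\;M_\alpha\,p_\xi(T_i-T)\;+\;p_{T\xi}(S_i-S).
\]
The first term tends to $0$ because $T_i\to T$ in $\mathrm{SOT}$ and $M_\alpha$ is a fixed constant; the second tends to $0$ because $S_i\to S$ in $\mathrm{SOT}$ and $T\xi$ is a fixed vector of $\mathcal H$. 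Thus $p_\xi(S_iT_i-ST)\to 0$ for every $\xi\in\mathcal H$, i.e. $S_iT_i\to ST$ in $\mathrm{SOT}$, which is precisely the claimed continuity.

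The computation is a routine transcription of the $C^*$-algebra argument, so I do not expect a genuine obstacle; the only points requiring care are, first, the translation of \say{$B$ is von Neumann bounded in $B_{loc}(\mathcal H)$} into the level-wise uniform operator-norm bounds $\sup_{S\in B}\|S_{|\alpha}\|\le M_\alpha$, which rests on the description of the lmc topology of $B_{loc}(\mathcal H)$ through the seminorms $P_\alpha$; and second, the observation that coherence of the operators localizes the entire estimate to a single Hilbert space $H_\alpha$, where the familiar inequality $\|S_{i|\alpha}\eta\|\le\|S_{i|\alpha}\|\,\|\eta\|$ is available. Note in particular that no bound on the second variable $T_i$ is needed, since in the displayed estimate it is the norm of $S_{i|\alpha}$, not that of $T_{i|\alpha}$, that multiplies $\|(T_i-T)\xi\|$.
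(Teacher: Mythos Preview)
Your proof is correct and follows essentially the same approach as the paper: both localize to a single level $\mathcal K_\alpha$, extract a uniform operator-norm bound $M_\alpha$ on $\{S_{|\alpha}:S\in B\}$ from the von Neumann boundedness hypothesis, and then invoke the classical joint $\mathrm{SOT}$-continuity of multiplication on norm-bounded sets in $B(\mathcal K_\alpha)$. The only difference is cosmetic: the paper delegates the final estimate to \cite[Thm.~4.3.1]{Murphy}, whereas you spell out the decomposition $S_iT_i-ST=S_i(T_i-T)+(S_i-S)T$ and the ensuing inequality explicitly.
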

	\begin{proof}
		
		Let $\mathcal H=\coprod \mathcal K_\alpha$ and $h\in \mathcal K_\beta$. There exists a $M_\beta>0$ such that $\|S_{|\mathcal K_\beta}\|_{B(\mathcal K_\beta)}\leq M_\beta$ for each $S\in B$. Hence, if $B\owns S^{(i)}\to S$ in SOT and $B_{loc}(\mathcal H)\owns T^{(i)}\to T$ in SOT, then it follows directly from \cite[Thm. 4.3.1]{Murphy} that 
		$$\|S^{(i)}\cdot T^{(i)}(h_\beta)\|_{\mathcal K_\beta}\to \|S\cdot T(h_\beta)\|_{\mathcal K_\beta}.$$
	\end{proof}
\noindent	We shall need the following functional calculus for locally $C^*$-algebras.
	\begin{theorem}
		Let $\mathcal A=\varprojlim_{i\in \tau}\mathcal A_i$ be a locally $C^*$-algebra, $x$ a normal element in $\mathcal A$ and $S = \mathrm{sp}_A(x)$. Let $C_0(S)$ be the algebra of all continuous functions on $S$ that vanish at $0$, endowed with the topology of uniform convergence on the compacts $S_i=\operatorname{sp}(x_i)$, $i \in \tau$. Then, the \emph{lmc} $*$-algebra $C_0(S)$ is embedded in the locally $C^*$-subalgebra $B$ of $\mathcal A$ generated by $x$, with a topological injective $*$-morphism $\Phi$ such that $\Phi(\mathrm{id}_S) = x$. \cite[10.2]{Maria2}.
	\end{theorem}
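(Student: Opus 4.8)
The plan is to reduce the statement to the classical continuous functional calculus for $C^*$-algebras, applied coordinatewise along the projective limit. Write $\mathcal A=\varprojlim_{i\in\tau}\mathcal A_i$ with canonical projections $\pi_i\colon\mathcal A\to\mathcal A_i$ and connecting $*$-homomorphisms $\pi_{ij}\colon\mathcal A_j\to\mathcal A_i$ for $i\le j$, and put $x_i=\pi_i(x)$, a normal element of the $C^*$-algebra $\mathcal A_i$. The first step is the spectral identity $S=\bigcup_{i\in\tau}S_i$. One inclusion is immediate: $\pi_i$ carries invertible elements to invertible elements, so $S_i=\operatorname{sp}(x_i)\subseteq\operatorname{sp}_{\mathcal A}(x)=S$ for every $i$. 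For the reverse inclusion, if $\lambda\notin\bigcup_iS_i$ then $x_i-\lambda$ is invertible in $\mathcal A_i$ for each $i$, and by uniqueness of inverses the family $\bigl((x_i-\lambda)^{-1}\bigr)_i$ is compatible with the $\pi_{ij}$, hence defines an element of $\varprojlim\mathcal A_i=\mathcal A$ that inverts $x-\lambda$ coordinatewise; thus $\lambda\notin S$. (In the non-unital case one works in the unitizations.) In particular $\bigcup_iS_i$ is dense in $S$.

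Next I would build $\Phi$. For $f\in C_0(S)$ set $f_i=f|_{S_i}\in C_0(S_i)$, let $\gamma_i\colon C_0(S_i)\to C^*(x_i)\subseteq\mathcal A_i$ be the classical functional calculus isomorphism onto the $C^*$-subalgebra $C^*(x_i)$ generated by $x_i$ (so $\gamma_i(\mathrm{id}_{S_i})=x_i$ and $\gamma_i$ is isometric), and define $\Phi(f)=\bigl(\gamma_i(f_i)\bigr)_i$. That $\Phi(f)\in\varprojlim\mathcal A_i=\mathcal A$ follows from naturality of the functional calculus: the maps $\pi_{ij}\circ\gamma_j$ and $g\mapsto\gamma_i(g|_{S_i})$ are $*$-homomorphisms $C_0(S_j)\to\mathcal A_i$ agreeing on the generator $\mathrm{id}_{S_j}$ (both send it to $\pi_{ij}(x_j)=x_i$), hence agree, so $\pi_{ij}\bigl(\gamma_j(f_j)\bigr)=\gamma_i(f_j|_{S_i})=\gamma_i(f_i)$. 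Working coordinatewise, $\Phi$ is a $*$-homomorphism and $\Phi(\mathrm{id}_S)=(x_i)_i=x$. The crucial point is that for the canonical seminorm $P_i(a)=\|\pi_i(a)\|_{\mathcal A_i}$ of $\mathcal A$ one has
\[
P_i\bigl(\Phi(f)\bigr)=\|\gamma_i(f_i)\|_{\mathcal A_i}=\sup_{S_i}|f_i|=\sup_{S_i}|f|,
\]
since $\gamma_i$ is isometric, and $\{\sup_{S_i}|\cdot|:i\in\tau\}$ is exactly the family of seminorms defining the topology of $C_0(S)$. Hence $\Phi$ is isometric for each seminorm, so it is a topological embedding onto its image; in particular $\Phi(f)=0$ forces $f$ to vanish on the dense set $\bigcup_iS_i$, so $f=0$ and $\Phi$ is injective.

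It remains to see that the image sits inside $B$. Each coordinate $\gamma_i(f_i)$ lies in $C^*(x_i)$, so $\Phi(C_0(S))\subseteq\varprojlim_iC^*(x_i)=\bigcap_i\pi_i^{-1}\bigl(C^*(x_i)\bigr)$, which is a closed $*$-subalgebra of $\mathcal A$ containing $x$; hence $B\subseteq\varprojlim_iC^*(x_i)$. Conversely, polynomials in $x$ and $x^*$ without constant term are dense in $\varprojlim_iC^*(x_i)$: a basic $0$-neighbourhood of $\mathcal A$ involves only finitely many $P_i$, and since $P_i\le P_j$ whenever $i\le j$ one reduces to a single (largest) index $j$, where density of the polynomials in $x_j,x_j^*$ inside $C^*(x_j)$ is classical. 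As $B$ is the closure of the $*$-subalgebra generated by $x$, this gives $B=\varprojlim_iC^*(x_i)\supseteq\Phi(C_0(S))$, which is what we want.

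The step I expect to be the main obstacle is the combination of the spectral identity $S=\bigcup_iS_i$ (i.e.\ understanding invertibility in the projective limit) with the matching of the canonical seminorms of $C_0(S)$ against $P_i\circ\Phi$; once these are settled, everything else is a coordinatewise transcription of the $C^*$-algebra theory, modulo routine care about the role of $0$ (so that ``vanishing at $0$'' is consistent between $C_0(S)$ and the $C_0(S_i)$) and about passing to unitizations.
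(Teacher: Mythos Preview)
The paper does not prove this theorem; it quotes it from \cite[10.2]{Maria2} and uses it as a black box. Your argument is the standard proof of this result: identify $\operatorname{sp}_{\mathcal A}(x)$ with $\bigcup_iS_i$ via coordinatewise invertibility in the projective limit, build $\Phi$ by applying the classical $C^*$-functional calculus $\gamma_i$ in each $\mathcal A_i$ and checking compatibility with the connecting maps, and then read off the seminorm identity $P_i(\Phi(f))=\sup_{S_i}|f|$ from the isometry of each $\gamma_i$. This is exactly the route taken in Fragoulopoulou's text, so there is nothing to compare beyond saying that your reconstruction matches the cited source. The only cosmetic remark is that your sentence ``In particular $\bigcup_iS_i$ is dense in $S$'' is redundant once you have already established the equality $S=\bigcup_iS_i$; you may simply drop it.
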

	
	\begin{definition}
		A continuous function $f:\mathbb R\to \mathbb C$ is said to be \emph{locally strongly continuous} if for every locally Hilbert space $\mathcal H$ and any (von Neumann) bounded net of self-adjoint operators $\{T^{(i)}\}_{i\in \mathcal I}$ in $B_{loc}(\mathcal H)$ converging in SOT to a self-adjoint operator $T$, the net $\{f(T^{(i)})\}_{i\in \mathcal I}$ converges in SOT to $f(T)$. (See \cite{Inoue} and \cite[10.2]{Maria2} for details on continuous functional calculus of normal operator on locally $C^*$-algebras.)
	\end{definition}
	\begin{lemma}
		If $f:\mathbb R\to \mathbb C$ is a continuous bounded function then it is locally strongly continuous.
	\end{lemma}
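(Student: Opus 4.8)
The plan is to push the whole statement down to the constituent Hilbert spaces $\mathcal H_\alpha$ of $\mathcal H=\coprod_\alpha\mathcal H_\alpha$, where it becomes the classical fact that a continuous function is strongly continuous on norm-bounded sets of self-adjoint operators on a Hilbert space, and then to reassemble.

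First I would note that every $S\in B_{loc}(\mathcal H)$ is coherent, so each $\mathcal H_\alpha$ is $S$-invariant and the restriction $r_\alpha\colon B_{loc}(\mathcal H)\to B(\mathcal H_\alpha)$, $S\mapsto S_{|\alpha}$, is a unital $*$-homomorphism. This has three consequences that together effect the reduction: (i) SOT-convergence $T^{(i)}\to T$ in $B_{loc}(\mathcal H)$ says exactly that $r_\beta(T^{(i)})\to r_\beta(T)$ in SOT in every $B(\mathcal H_\beta)$, since each $h\in\mathcal H$ lies in some $\mathcal H_\beta$; (ii) von Neumann boundedness of $\{T^{(i)}\}$ in $B_{loc}(\mathcal H)\subset\varprojlim B(\mathcal H_\alpha)$ supplies, for each $\alpha$, a constant $M_\alpha$ with $\|r_\alpha(T^{(i)})\|\le M_\alpha$ for all $i$, and hence $\|r_\alpha(T)\|\le M_\alpha$ as well upon taking SOT-limits, so at level $\alpha$ the operators $T^{(i)}_{|\alpha}$ and $T_{|\alpha}$ are self-adjoint with spectra in $[-M_\alpha,M_\alpha]$; (iii) a $*$-homomorphism intertwines the continuous functional calculus of self-adjoint elements, so $f(S)_{|\alpha}=f(S_{|\alpha})$ for self-adjoint $S$ (here $f(S)$ is the element produced by the functional calculus of \cite[10.2]{Maria2}, computed in a unitization if necessary); in particular $f(T)$ is a well-defined element of $B_{loc}(\mathcal H)$.

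Now fix $h\in\mathcal H_\beta$ and $\varepsilon>0$. Using Weierstrass I would take a polynomial $p$ with $\sup_{[-M_\beta,M_\beta]}|f-p|<\varepsilon$; since the norm of a continuous function of a self-adjoint operator equals the supremum of that function over the spectrum, (ii) gives $\|(f-p)(S_{|\beta})\|\le\varepsilon$ uniformly over $S\in\{T^{(i)}\}\cup\{T\}$. For the polynomial part, iterating Lemma \ref{bddsot} yields $(T^{(i)})^k\to T^k$ in SOT for each $k$ — the powers again lie in a fixed bounded set, so one factor may always be drawn from a bounded set — and, SOT being a vector topology, $p(T^{(i)})\to p(T)$ in SOT. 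Splitting
\[
\|f(T^{(i)})h-f(T)h\|\le\|(f-p)(T^{(i)})h\|+\|p(T^{(i)})h-p(T)h\|+\|(p-f)(T)h\|,
\]
the outer two terms are $\le\varepsilon\|h\|$ and the middle one tends to $0$; as $h$ and $\varepsilon$ are arbitrary, $f(T^{(i)})\to f(T)$ in SOT, i.e.\ $f$ is locally strongly continuous.

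The only step I expect to need genuine care is (iii): one must verify that restriction to a coherent-invariant subspace commutes with the locally $C^*$ functional calculus, and that $f(T)$, defined a priori only level by level, really lands in $B_{loc}(\mathcal H)$ — this is the one place where the ``locally'' structure actually does work, and everything past the reduction is classical. I would also observe that boundedness of $f$ is used only to have $f$ defined on all of $\mathbb R$: on level $\alpha$ only the restriction of $f$ to the compact interval $[-M_\alpha,M_\alpha]$ enters, so in fact the same proof handles an arbitrary continuous $f$ applied to bounded nets.
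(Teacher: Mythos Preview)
Your argument is correct, and it takes a genuinely different route from the paper's. The paper proceeds in the spirit of the classical Kaplansky density proof: it shows that the set $\mathfrak I_0$ of locally strongly continuous functions in $C_0(\mathbb R)$ is a $\tau_{\mathcal B}$-closed $*$-subalgebra, verifies by hand that the rational functions $g(z)=z/(1+z^2)$ and $f(z)=1/(1+z^2)$ lie in it, invokes the Stone--Weierstrass type density lemma to conclude $\mathfrak I_0=C_0(\mathbb R)$, and then reaches an arbitrary bounded $h$ via the algebraic identity $h=hf+z\cdot hg$ together with Lemma~\ref{bddsot}. You instead collapse the whole question to a single level $\mathcal H_\beta$ and run the standard Weierstrass/three-term estimate there.

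Your approach is more elementary---it bypasses the $\tau_{\mathcal B}$-density lemma, the explicit rational generators, and the identity $h=hf+zhg$---and, as you note at the end, it actually proves more: only continuity of $f$ on each compact $[-M_\alpha,M_\alpha]$ is used, so boundedness of $f$ is irrelevant to the argument. The paper's route, by contrast, genuinely needs $h$ bounded to keep $hf,hg\in C_0(\mathbb R)$. The price you pay is the verification in (iii) that the locally $C^*$ functional calculus of \cite[10.2]{Maria2} commutes with the restriction $*$-homomorphisms $r_\alpha$ and that the levelwise-defined $f(T)$ is coherent; this is routine (naturality of the continuous functional calculus under $*$-homomorphisms, applied to the unital $r_\alpha$ on $B_{loc}(\mathcal H)$), but it is a step the paper's intrinsic argument avoids.
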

	\begin{proof}
		Let $\mathfrak I$ denotes the set of all locally strongly continuous functions. Then $\mathfrak I$ is a vector space with respect to pointwise addition and scalar multiplication. We claim that $C_0(\mathbb R)\subset \mathfrak J$. To show this, let $\mathfrak I_0=\mathfrak I\cap C_0(\mathbb R)$. Notice that $\mathfrak I_0$ is a closed $*$-subalgebra of $C_0(\mathbb R)_\mathcal B$ ($\mathcal B$ denotes the family of all bounded subsets of $\mathbb R$), because $\mathfrak I_0$ is closed with respect to multiplication of functions due to Lemma \ref{bddsot} and it is evidently self-adjoint. To show that $\mathfrak I_0$ is topologically closed in $C_0(\mathbb R)_{\mathcal B}$, let $\{f^{(i)}\}_{i\in \mathcal I}$ be a net of functions in $\mathfrak I_0$ converging to $f\in C_0(\mathbb R)$ in the $\tau_\mathcal B$ topology. For any locally Hilbert space $\mathcal H=\coprod \mathcal K_\alpha$, if $\{T^{(j)}\}_{j\in \mathcal J}$ is a bounded net of self adjoint operators converging to $T\in B_{loc}(\mathcal H)$ in SOT, then for any $h\in \mathcal K_\alpha$, we have \begin{align}\label{inequality}
\|f(T^{(j)})h-f(T)h\|_{\mathcal K_\alpha}&\leq \|f(T^{(j)})h-f^{(i_o)}(T^{(j)})h\|_{\mathcal K_\alpha}+\|f^{(i_o)}(T^{(j)})h-f^{(i_o)}(T)h\|_{\mathcal K_\alpha}+\|f^{(i_o)}(T)h-f(T)h\|_{\mathcal K_\alpha}
		\end{align}
		Since, $\{T^{(j)}\}_{j\in \mathcal J}$ is a bounded sequence, there exists a $M_\alpha>0$ such that $\|T^{(j)}\|_{B(\mathcal K_\alpha)}\leq M_\alpha$ for all $j\in \mathcal J$. Hence, $\operatorname{sp}(T^{(j)}_{|\mathcal K_\alpha})\subset [-M_\alpha,M_\alpha]$ for each $j$. Since, $f^{(i)}\overset{\tau_\mathcal B}{\longrightarrow}f$, we can choose $i_o$ large enough such that $\|f^{(i)}-f\|_{[-M_\alpha,M_\alpha]} <\frac{\epsilon}{\|h\|}$ for all $i\geq i_o$. Hence, using equation \ref{inequality} and the fact that $f^{(i_o)}$ is locally strongly continuous, we can choose a $j_o$ such that 
		\begin{align*}
		\|f(T^{(j)})h-f(T)h\|_{\mathcal K_\alpha}&\leq \|f-f^{(i_o)}\|_{[-M_\alpha,M_\alpha]}\|h\|+\frac{\epsilon}{3}+\|f-f^{(i_o)}\|_{[-M_\alpha,M_\alpha]}\|h\|\\&\leq\epsilon
		\end{align*} for all $j>j_o$. This concludes that $f$ is locally strongly continuous and $\mathfrak I_0$ is a closed subalgebra of $C_0(\mathbb R)_{\mathcal B}$. Now, consider the functions $f(z)=\frac{1}{1+z^2}$ and $g(z)=\frac{z}{1+z^2}$. Clearly, $f,g\in C_0(\mathbb R)$.  We show that $f,g\in \mathfrak I_0$. For any locally Hilbert space $\mathcal H$ and self-adjoint operators $S,T\in B_{loc}(\mathcal H)$, if $h\in \mathcal K_\alpha$ we have
		\begin{align*}
			\|g(T)h-g(S)h\|_{\mathcal K_\alpha}&\leq \|(1+T^2)^{-1}(S-T)(1+S^2)^{-1}h\|_{\mathcal K_\alpha}+\|(1+T^2)^{-1}(T(S-T)S)(1+S^2)^{-1}h\|_{\mathcal K_\alpha}\\&\leq \|(T-S)(1+S^2)^{-1}h\|_{\mathcal K_\alpha}+\|(S-T)S(1+S^2)^{-1}h\|_{\mathcal K_\alpha},
		\end{align*}
		since, $\|(1+T^2)^{-1}\|_{B(\mathcal K_\alpha)}\leq 1$ and $\|(1+T^2)^{-1}T\|_{B(\mathcal K_\alpha)}\leq 1$. Hence $g$ is locally strongly continuous and therefore $g\in \mathfrak I_0$. Since, $g\in \mathfrak I$ and $zg\in \mathfrak I$ due to Lemma \ref{bddsot}. Hence, $f=1-gz \in \mathfrak I$ and therefore both $f,g\in \mathfrak I_0$. Since the set $\{f,g\}$ separates points in $C_0(\mathbb R)$, the $*$-subalgebra generated by $\{f,g\}$ is dense in $C_0(\mathbb R)_\mathcal{ B}$, i.e., $\mathfrak I_0$ is dense in $C_0(\mathbb R)_\mathcal B$. But $\mathfrak I_o$ is closed subalgebra of $C_0(\mathbb R)_{\mathcal B}$, hence $\mathfrak I_0=C_0(\mathbb R)$. 		
		Now let $h:\mathbb R\to \mathbb R$ be a bounded functions, then $hf, hg\in \mathfrak I_0$ and hence $zhg\in \mathfrak I$ (due to Lemma \ref{bddsot}). Thus, $h=hf+zhg\in \mathfrak I$, which concludes that an arbitrary bounded function is locally strongly continuous.
	\end{proof}
	\begin{theorem}\label{totbound}
		Let $\mathcal H=\coprod_{\alpha\in \Lambda}\mathcal K_\alpha$ be a locally Hilbert space and $\mathcal A$ be a locally $C^*$-subalgebra of $\mathcal B_{loc}(\mathcal H)$ such that $\mathfrak A=\overline{\mathcal A}^{SOT}$. Then the following statements hold true.
		\begin{enumerate}
			\item $\mathcal A_{sa}$ is SOT-dense in $\mathfrak A_{sa}$.
			\item For any real number $M>0$   $$\overline{\bigl\{(a_\alpha)\in \mathcal A_{sa}:~\|a_\alpha\|\leq M, \forall \alpha\in \Lambda\bigr\}}^{SOT}=\bigl\{T\in \mathfrak  A_{sa}:~\|T_{|\mathcal K_\alpha}\|\leq M, \forall \alpha\in \Lambda\bigr\}.$$
		\end{enumerate}
	\end{theorem}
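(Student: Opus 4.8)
The plan is to transcribe the classical proof of the Kaplansky density theorem, using the locally strongly continuous functional calculus established above in place of the strong continuity of $t\mapsto 2t/(1+t^2)$, and to reduce the topological book‑keeping to two observations which I would record first. (i) The SOT and WOT on $B_{loc}(\mathcal H)$ have the same continuous dual: an SOT‑continuous functional is dominated by $\max_j\|T_{|\mathcal K_{\alpha_j}}h_j\|$ for finitely many $h_j$, and since $\Lambda$ is directed and the operators are coherent this reduces, over a single index $\alpha$ dominating the $\alpha_j$, to the classical statement on $B(\mathcal K_\alpha)$; consequently the SOT‑ and WOT‑closures of any convex subset of $B_{loc}(\mathcal H)$ coincide. (ii) $\mathfrak A=\overline{\mathcal A}^{SOT}$ is a $*$‑subalgebra of $B_{loc}(\mathcal H)$ (separate SOT‑continuity of multiplication, plus (i) for $*$‑closedness) which is stable under the continuous functional calculus of its self‑adjoint elements by functions vanishing at $0$: if $T\in\mathfrak A_{sa}$ with $\|T_{|\mathcal K_\alpha}\|\le M$ for all $\alpha$ and $h$ is continuous on $[-M,M]$ with $h(0)=0$, pick polynomials $p_n$ with $p_n(0)=0$ and $\|p_n-h\|_{\infty,[-M,M]}\to 0$; then $p_n(T)\in\mathfrak A$ and $\|(p_n(T)-h(T))_{|\mathcal K_\alpha}\|\le\|p_n-h\|_{\infty,[-M,M]}\to 0$, so $p_n(T)\to h(T)$ in SOT and hence $h(T)\in\mathfrak A_{sa}$.

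For statement (1), given $T\in\mathfrak A_{sa}$ I would choose a net $b_i\in\mathcal A$ with $b_i\to T$ in SOT; since the involution is WOT‑continuous, $\tfrac12(b_i+b_i^*)\in\mathcal A_{sa}$ converges to $T$ in WOT, so $T$ lies in the WOT‑closure of the convex set $\mathcal A_{sa}$, which by (i) equals its SOT‑closure. Conversely, an SOT‑limit of self‑adjoint operators is self‑adjoint (WOT‑continuity of the involution and uniqueness of limits), so $\overline{\mathcal A_{sa}}^{SOT}\subseteq\mathfrak A\cap(B_{loc}(\mathcal H))_{sa}=\mathfrak A_{sa}$, and (1) follows.

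For statement (2), the inclusion $\subseteq$ is immediate: if $a^{(i)}\in\mathcal A_{sa}$ with $\|a^{(i)}_{|\mathcal K_\alpha}\|\le M$ for all $\alpha$ and $a^{(i)}\to T$ in SOT, then $T$ is self‑adjoint, $T\in\mathfrak A$, and $\|T_{|\mathcal K_\alpha}h\|=\lim_i\|a^{(i)}_{|\mathcal K_\alpha}h\|\le M\|h\|$ for $h\in\mathcal K_\alpha$. For $\supseteq$, I would rescale so that $M=1$ and set $f_0(z)=2z/(1+z^2)$. The resolvent estimate in the proof of the lemma on locally strongly continuous functions shows that $f_0$ is strongly continuous on arbitrary (not necessarily bounded) nets of self‑adjoint operators; moreover $f_0(\mathbb R)\subseteq[-1,1]$, $f_0(0)=0$, and $f_0$ restricts to a homeomorphism of $[-1,1]$ onto itself, whose continuous inverse $g_0$ satisfies $g_0(0)=0$ and $g_0([-1,1])=[-1,1]$. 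Given $T\in\mathfrak A_{sa}$ with $\|T_{|\mathcal K_\alpha}\|\le 1$ for all $\alpha$, observation (ii) gives $g_0(T)\in\mathfrak A_{sa}$, so by (1) there is a net $b_i\in\mathcal A_{sa}$ with $b_i\to g_0(T)$ in SOT. Putting $a^{(i)}:=f_0(b_i)\in\mathcal A_{sa}$, I get $\|a^{(i)}_{|\mathcal K_\alpha}\|\le\sup_{t\in\mathbb R}|f_0(t)|=1$ for every $\alpha$, while strong continuity of $f_0$ together with the composition rule and $f_0\circ g_0=\mathrm{id}$ on $[-1,1]\supseteq\mathrm{sp}(T_{|\mathcal K_\alpha})$ (so that $f_0(g_0(T))=T$) yield $a^{(i)}\to T$ in SOT; thus $T$ lies in the SOT‑closure of $\{(a_\alpha)\in\mathcal A_{sa}:\ \|a_\alpha\|\le 1\ \forall\alpha\}$, which is what was needed.

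The step needing genuine care is the boundedness clause in the definition of ``locally strongly continuous'': the net $b_i\to g_0(T)$ furnished by statement (1) is a priori not contained in any bounded set, so one cannot simply quote that definition — one must instead use that the specific function $2z/(1+z^2)$ is strongly continuous with no boundedness hypothesis, which is exactly what the resolvent computation in the preceding lemma's proof delivers (alternatively, one inserts a truncation step to bound $b_i$ first). The other two ingredients — the coincidence of SOT‑ and WOT‑closures of convex sets, and the functional‑calculus stability of $\mathfrak A$ — are the locally convex substitutes for the facts used silently in the $C^*$‑case, namely that the weak and strong closures of a convex set in $B(\mathcal H)$ agree and that the strong closure of a $C^*$‑algebra is a von Neumann algebra.
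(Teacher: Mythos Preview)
Your proof is correct and follows the classical Kaplansky-density template. For part (1) you argue exactly as the paper does (symmetrize, use WOT-continuity of the involution, invoke equality of SOT- and WOT-closures on convex sets). For part (2) the routes diverge: the paper takes a net $T^{(i)}\in\mathcal A_{sa}$ from part (1) and applies to it the single truncating function $f(t)=t$ on $[-M,M]$, $f(t)=M^2/t$ elsewhere, observing that $f(T)=T$ while $\|f(T^{(i)})_{|\mathcal K_\alpha}\|\le\|f\|_\infty=M$; you instead first push $T$ through $g_0=f_0^{-1}$ to land in $\mathfrak A_{sa}$, then invoke (1), then pull back through $f_0(z)=2z/(1+z^2)$. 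Your version buys precisely the robustness you flag in your final paragraph: the net furnished by (1) is a priori unbounded, so the paper's appeal to ``$f\in C_0(\mathbb R)$, hence $f$ is locally strongly continuous'' is formally incomplete---that notion is defined only for bounded nets---whereas the resolvent identity in the preceding lemma shows directly that $f_0$ is strongly continuous with no boundedness hypothesis. The paper's route is one line shorter but silently needs exactly the patch you anticipate; your route, at the cost of recording observation (ii) on functional-calculus stability of $\mathfrak A$, avoids the issue cleanly.
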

	\begin{proof}
		{\bf (1.)} Let $T\in \mathfrak A$ be a self-adjoint operator. Then there exists a net $\{T^{(i)}\}_{i\in \mathcal I}$ of elements in $\mathfrak A$ converging to $T$ in the SOT. Thus ${T^{(i)}}^* \longrightarrow T^*$ in the SOT (due to lemma \ref{invocont}). Hence, ${T^{(i)}}^*\longrightarrow T^*$ in WOT. Therefore $\frac{{T^{(i)}}^*+T^{(i)}}{2}\longrightarrow T$ in WOT. But WOT and SOT agree on convex subsets of $B_{loc}(\mathcal H)$. Hence, $\mathcal A_{sa}$ is SOT-dense in $\mathfrak A_{sa}$.\\
		
	\noindent	{\bf (2.)} Let $T\in \bigl\{T\in \mathfrak  A:~\|T_{|\mathcal K_\alpha}\|\leq M, \forall \alpha\in \Lambda\bigr\}$. Then due to (1), there exists a a net $\{T^{(i)}\}_{i\in \mathcal I}$ of self-adjoint operators in $\mathcal A_{sa}$ converging to $T$ in SOT. Consider a function $f:\mathbb R\to \mathbb R$ such that \begin{equation}
			f(t)=\begin{cases}
				t & t\in [-M,M]\\ \frac{M^2}{t} & t\in(-\infty, -M)\cup (M,\infty),
			\end{cases}
		\end{equation}
		Clearly, $f\in C_0(\mathbb R)$ and hence $f$ is locally strongly continuous. This means that the net $\{f(T^{(i)})\}_{i\in \mathcal I}$ converges to $f(T)$ in SOT. But $f(T)=T$ because $\operatorname{sp}(T)\subset [-M,M]$. Further $\|f(T^{(i)})\|_{B(\mathcal K_\alpha)}\leq \|f\|_\infty=M$ for each $\alpha\in \Lambda$. Thus each $f(T^{(i)})\in \bigl\{(a_\alpha)\in \mathcal A_{sa}:~\|a_\alpha\|\leq M, \forall \alpha\in \Lambda\bigr\}$ and hence $$\overline{\bigl\{(a_\alpha)\in \mathcal A_{sa}:~\|a_\alpha\|\leq M, \forall \alpha\in \Lambda\bigr\}}^{SOT}=\bigl\{T\in \mathfrak  A_{sa}:~\|T_{|\mathcal K_\alpha}\|\leq M, \forall \alpha\in \Lambda\bigr\}.$$
	\end{proof}
\noindent	We now define a more profound type of density property for a locally $C^*$-algebra $\mathcal A\subset B_{loc}(\mathcal H)$, which is a direct generalization of Kaplanski density Theorem for $C^*$-algebras. 
	\begin{definition}
		A locally $C^*$-algebra $\mathcal A\subseteq B_{loc}(\mathcal H)$ for some locally Hilbert space $\mathcal H=\coprod_{\alpha\in \Lambda} \mathcal K_\alpha$ is said to have \emph{Kaplanski Density Property} (KDP) if for any net of non-decreasing positive integers $\{M_\alpha\}_{\alpha\in \Lambda}$ the following holds true, $$\overline{\bigl\{(a_\alpha)_{\alpha\in \Lambda}\in \mathcal A_{sa}:~\|a_\alpha\|_{B(\mathcal K_\alpha)}\leq M_\alpha, \forall \alpha\in \Lambda\bigr\}}^{SOT}=\bigl\{T\in \mathfrak A_{sa}:~\|T_{|\mathcal K_\alpha}\|_{B(\mathcal K_\alpha)}\leq M_\alpha, \forall \alpha\in \Lambda\bigr\}$$ where $\mathfrak A=\overline{\mathcal A}^{SOT}$ and $\mathcal A_{sa}$ denotes the subspace of self-adjoint elements in $\mathcal A$.
	\end{definition}
	\begin{example}
Let $\{\mathcal A_r\}_{r=1}^\infty$ be a sequence of $C^*$-algebras. Consider the system of $C^*$-algebras $\{\mathfrak C_i\}_{i=1}^\infty$, where $\mathfrak C_i=\mathcal A_1\oplus\cdots\oplus \mathcal A_i$. Clearly $\{\mathfrak C_i\}_{i=1}^\infty$ is a reduced inverse system with the natural projections $\theta_{ij}:\mathfrak C_j\to \mathfrak C_i$ defined as $(a_1,\cdots,a_j)\longmapsto (a_1,\cdots, a_i)$ for each $i\leq j$. Let $\mathfrak C$ denote the locally $C^*$-algebra $\varprojlim \mathfrak C_i$. We claim that $\mathfrak C$ has KDP. To see this, first notice that if $\pi_i:\mathcal A_i\to B(\mathcal H_i)$ is the universal representation for each $i$, then $\mathcal H=\coprod \mathcal K_i$ is a locally Hilbert space, where $\mathcal K_i=\oplus_{k\leq i}\mathcal H_i$. The map $\pi:\mathfrak C\to B_{loc}(\mathcal H)$, defined as $$\pi\bigl((T_j)_{j\in\mathbb N}\bigr)(h_1,\cdots, h_i)=(T_{i1}h_1,\cdots, T_{ii}h_i)$$ is the universal representation of $\mathfrak C$, where each $T_j=(T_{j1},...,T_{jj})\in \mathcal A_1\oplus\cdots\oplus \mathcal A_j$. Let $\{M_i\}_{i=1}^\infty$ be a increasing sequence of positive real numbers and suppose $T$ be a self adjoint operator in $B_{loc}(\mathcal H)$ such that $T\in \overline{\mathfrak C}^{SOT}$ and $\|T_{|\mathcal K_j}\|\leq M_j$ for each $j\in \mathbb N$. Define a operator $S\in B_{loc}(\mathcal H)$ such that if $(h_1,...,h_j)\in \mathcal K_j$ then  $S(h_1,...,h_j)=\bigl(\frac{1}{M_1}T_{j1}h_1,\frac{1}{M_2}T_{j2}h_2,...,\frac{1}{M_j}T_{jj}h_j\bigr)$. Clearly $S$ is well defined, coherent and $S\in \overline{\mathfrak C}^{SOT}$. Further $\|S_{|\mathcal K_j}\|_{B(\mathcal K_j)}\leq 1$ holds true for each $j\in \mathbb N$. Hence, by the theorem \ref{totbound}, we can find a net $\{S^{(\alpha)}\}_{\alpha\in \Gamma}$ of operators in $\mathfrak C_{sa}$ which converges to $S$ in the SOT and $\|S^{(\alpha)}_{|\mathcal K_j}\|_{B(\mathcal K_j)}\leq 1$ for each $j\in \mathbb N$ and each $\alpha\in \Gamma$. For each $\alpha\in \Gamma$, let us define a new operator $T^{(\alpha)}\in B_{loc}(\mathcal H)$ such that $T^{(\alpha)}(h_1,...,h_j)=(M_1S^{(\alpha)}_{j1}h_1, ..., M_jS^{(\alpha)}_{jj}h_j)$. Clearly $\|T^{(\alpha)}_{|\mathcal K_j}\|_{B(\mathcal K_j)}\leq M_j$ for each $j\in \mathbb N$. It can easily be seen that $T^{(\alpha)}\in \mathfrak C_{sa}$ for each $\alpha\in \Gamma$ and the net $\{T^{(\alpha)}\}_{\alpha\in \Gamma}\overset{SOT}{\longrightarrow} T$. Thus, $\mathfrak C$ has Kaplanski density property (KDP).
	\end{example}
	Locally $C^*$-algebras with KDP are rather rare. In the next section, we shall see that Kaplansky density property in a Fr\'echet locally $C^*$-algebras enables a full fledge Sherman-Takeda type  characterization for its strong bidual.
	\section{Bidual of locally $C^*$-algebras}\label{mainsection}
	Let $\mathcal A=\varprojlim \mathcal A_\alpha$ be a locally $C^*$-algebra, where $\mathcal A_\alpha$'s make an inverse system of $C^*$-algebras for $\alpha\in \Lambda$. Clearly, $\mathcal A$ has a jointly continuous and hence hypocontinuous multiplication. Hence, $\mathcal A$ satisfies both Gulick's \cite{Gulick} and Filali's \cite{Filali} criterion for existence of Arens products on the second strong dual $\mathcal A^{**}$. Using the involution on each $\mathcal A_\alpha$, we can define an involution on $\mathcal A$ as $u^*(e)=\overline{u(e^*)}$ for each $u\in \mathcal A^{**}$ and $e\in \mathcal A^*$. Using the Arens regularity of  $C^*$-algebras, we prove in the following that a locally $C^*$-algebra is Arens regular.
	\begin{lemma}
For each $\alpha\in \Lambda$, the second conjugate map $f_\alpha^{**}:\mathcal A^{**}\to \mathcal A_\alpha^{**}$ of the canonical projection $f_\alpha:\mathcal A\to \mathcal A_\alpha$ is a continuous $*$-homomorphism (of algebras) with respect to either the Arens products on $\mathcal A^{**}$ and the unique Arens product $\mathcal A^{**}_\alpha$. Further, $f_\alpha^{**}$ preserves involution can be proved easily. Hence, $f_\alpha^{**}$ is a continuous $*$-homomorphism with respect to both the Arens products.
	\end{lemma}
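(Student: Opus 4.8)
The plan is to reduce the entire statement to a short list of elementary identities between the ``twisted'' functionals on $\mathcal A$ that enter the Arens construction; all of these follow from the single fact that $f_\alpha$ is a continuous $*$-homomorphism, and—in contrast with the Banach-algebra proofs that run through weak-$*$ density—the multiplicativity of $f_\alpha^{**}$ will be an entirely formal substitution, so no density argument is needed there.

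First I would settle continuity. Since $f_\alpha:\mathcal A\to\mathcal A_\alpha$ is a continuous linear map it sends (von Neumann) bounded subsets of $\mathcal A$ to bounded subsets of $\mathcal A_\alpha$. Hence for $\phi\in\mathcal A_\alpha^{*}$ and bounded $B\subset\mathcal A$ one has $|f_\alpha^{*}(\phi)|_B=\sup_{a\in B}|\phi(f_\alpha(a))|=|\phi|_{f_\alpha(B)}$, which shows that the transpose $f_\alpha^{*}:\mathcal A_\alpha^{*}\to\mathcal A^{*}$ is continuous for the strong dual topologies; being continuous it carries $\tau_b$-bounded sets to $\tau_b$-bounded sets, so $f_\alpha^{**}=(f_\alpha^{*})^{*}:\mathcal A^{**}\to\mathcal A_\alpha^{**}$ is continuous for the strong bidual topologies (and, as a transpose, it is also $\sigma(\mathcal A^{**},\mathcal A^{*})$–$\sigma(\mathcal A_\alpha^{**},\mathcal A_\alpha^{*})$ continuous). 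I would also record the naturality relation $f_\alpha^{**}\circ J_{\mathcal A}=J_{\mathcal A_\alpha}\circ f_\alpha$, immediate from $f_\alpha^{**}(J_{\mathcal A}(a))(\phi)=J_{\mathcal A}(a)(f_\alpha^{*}\phi)=\phi(f_\alpha(a))$.

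For the multiplicativity I would unwind the Gulick definitions recalled in Subsection \ref{apd}. The crucial elementary observations are that, for every $b\in\mathcal A$ and $\phi\in\mathcal A_\alpha^{*}$,
\[
f_\alpha^{*}\bigl(\phi_{f_\alpha(b)}\bigr)=\bigl(f_\alpha^{*}\phi\bigr)_b
\qquad\text{and}\qquad
f_\alpha^{*}\bigl({}_{f_\alpha(b)}\phi\bigr)={}_{b}\bigl(f_\alpha^{*}\phi\bigr)\quad\text{in }\mathcal A^{*},
\]
each verified by evaluating at an arbitrary $c\in\mathcal A$ and using $f_\alpha(bc)=f_\alpha(b)f_\alpha(c)$. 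Feeding these back through the definitions of ${}_vf$, $f_u$, $\square$, $\diamond$—together with $f_\alpha^{**}(v)(\psi)=v(f_\alpha^{*}\psi)$—yields $f_\alpha^{**}(u\square v)=f_\alpha^{**}(u)\,\square_\alpha\,f_\alpha^{**}(v)$ and $f_\alpha^{**}(u\diamond v)=f_\alpha^{**}(u)\,\diamond_\alpha\,f_\alpha^{**}(v)$ for all $u,v\in\mathcal A^{**}$, purely by substitution. Because $\mathcal A_\alpha$ is a $C^{*}$-algebra it is Arens regular, so $\square_\alpha=\diamond_\alpha$ is the unique Arens product on $\mathcal A_\alpha^{**}$, and $f_\alpha^{**}$ is thus an algebra homomorphism out of both $(\mathcal A^{**},\square)$ and $(\mathcal A^{**},\diamond)$.

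Finally, for the involution: the involution on $\mathcal A^{*}$ is $e^{*}(a)=\overline{e(a^{*})}$ and on the biduals $u^{*}(e)=\overline{u(e^{*})}$. Since $f_\alpha$ is a $*$-map, $f_\alpha^{*}(\phi^{*})=(f_\alpha^{*}\phi)^{*}$ for $\phi\in\mathcal A_\alpha^{*}$ (evaluate at $a\in\mathcal A$ and use $f_\alpha(a^{*})=f_\alpha(a)^{*}$), whence $f_\alpha^{**}(u^{*})=f_\alpha^{**}(u)^{*}$, and combining the three parts gives the claim. I do not expect a genuine analytic obstacle here; the one thing needing care—and the reason to lay out the identities above explicitly—is bookkeeping, namely keeping straight on which dual space each twisted functional lives and in which slot the transpose is being applied.
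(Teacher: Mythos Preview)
Your proposal is correct and follows essentially the same approach as the paper: both arguments hinge on the elementary identity $f_\alpha^{*}\bigl(\phi_{f_\alpha(b)}\bigr)=(f_\alpha^{*}\phi)_b$ (and its left-handed analogue), then feed this through the Gulick definitions of $\square$ and $\diamond$ by pure substitution. Your write-up is somewhat more explicit about continuity and the involution, but the core strategy and the key identity are identical to the paper's.
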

	\begin{proof}
		Continuity of $f_\alpha^{**}$ follows from the fact that conjugate map of of continuous map is again continuous (w.r.t to the strong dual topology). We first show that $f_\alpha^{**}$ is a homomorphism of algebras with respect to first Arens product $\square$ on $\mathcal A^{**}$. Let $u,v\in \mathcal A^{**}$. Then for any $a'\in \mathcal A^*_\alpha$, we have\begin{align}\label{identity}
			f_\alpha^{**}(u\square v)(a')&=u\square v(f_\alpha^*(a'))\nonumber\\&=u\Bigl({}_v\bigl(f_\alpha^*(a')\bigr)\Bigr).
		\end{align}
		But \begin{equation}\left<{}_v\left(f_\alpha^*(a')\right),(b'_\beta)_{\beta\in \Lambda}\right>=v\Bigl(\left(f_\alpha^*(a')\right)_{(b_\beta')}\Bigr)\end{equation} for any $(b_\beta')_{\beta\in \Lambda}\in \mathcal A^*=\varinjlim\mathcal A_\alpha^*$. Through further computations, one can verify that $\left(f_\alpha^*(a')\right)_{(b_\beta')}=f_\alpha^*\Bigl(a'_{f_\alpha((b'_\beta))}\Bigr)$. Substituting this in the previous equation, we get \begin{equation}
{}_v\left(f_\alpha^*(a')\right)=f_\alpha^*\Bigl({}_{f_\alpha^{**}(v)}a'\Bigr).
		\end{equation}
		Finally substituting this in equation \ref{identity}, we get \begin{equation}
f_\alpha^{**}(u\square v)(a')=f_\alpha^{**}(u)\square f_\alpha^{**}(v)(a').
		\end{equation}
		Thus, $f_\alpha^{**}$ is an algebra homomorphism with respect to first Arens product. Similarly, one can show that it is also an algebra homomorphism with respect to second Arens products.
	\end{proof}
\noindent	Using the universal property of inverse limits, we now obtain a continuous linear map $\theta:\mathcal A^{**}\to \varprojlim \mathcal A_\alpha^{**}$ which is a $*$-homomorphism with respect to both the Arens products. Further, $\theta$ is the conjugate(adjoint) map of the continuous surjective linear map $\phi:\varinjlim \mathcal A_\alpha^*\to (\mathcal A^*,\tau_b)$ (recall that $\mathcal A^*$ can be identified as a vector space with the inverse limit $\varinjlim\mathcal A_\alpha^*$ and the inverse limit topology is strictly finer than strong dual topology on $\mathcal A^*$). Conjugate of surjective map is always injective and hence the map $\theta:\mathcal A^{**}\to \varprojlim \mathcal A_\alpha^{**}$ is injective. Hence, the two Arens products on $\mathcal A^{**}$ agree. In general, $\theta$ is not surjective. We can also notice that $\varprojlim\mathcal A_\alpha$ is a topological subalgebra of $\varprojlim\mathcal A_{\alpha}^{**}$ and $\theta (\hat a)=a$ (where $\hat a=J(a)$). We have proved the  following observation.
\begin{corollary}
	Every locally $C^*$-algebra is Arens regular.
\end{corollary}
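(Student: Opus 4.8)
The plan is to reduce Arens regularity of $\mathcal A$ to the (classical) Arens regularity of each $C^*$-algebra $\mathcal A_\alpha$ in the defining reduced inverse system $\mathcal A=\varprojlim\mathcal A_\alpha$, and to transport this back along the injective map $\theta$ constructed above. So first I would recall that, by the classical theory, each $\mathcal A_\alpha$ is a $C^*$-algebra and hence Arens regular: the two Arens products $\square$ and $\diamond$ on $\mathcal A_\alpha^{**}$ coincide, giving a single Arens product on $\mathcal A_\alpha^{**}$ (indeed the von Neumann algebra product of its enveloping von Neumann algebra). Consequently, on the topological subalgebra $\varprojlim\mathcal A_\alpha^{**}$ of $\prod_\alpha \mathcal A_\alpha^{**}$ the two Arens products also agree, since Arens multiplication in a product is computed coordinatewise.

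Next I would invoke the preceding Lemma: for every $\alpha$ the second conjugate $f_\alpha^{**}:\mathcal A^{**}\to\mathcal A_\alpha^{**}$ of the canonical projection $f_\alpha:\mathcal A\to\mathcal A_\alpha$ is a continuous $*$-homomorphism with respect to \emph{both} Arens products $\square$ and $\diamond$ on $\mathcal A^{**}$ (and the unique Arens product on $\mathcal A_\alpha^{**}$), and that these maps are compatible with the transition maps $f_{\alpha\beta}^{**}$, so by the universal property of the inverse limit they assemble into a single continuous linear map $\theta:\mathcal A^{**}\to\varprojlim\mathcal A_\alpha^{**}$ that is simultaneously a homomorphism for $\square$ and for $\diamond$.

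The crucial point is the injectivity of $\theta$. Here I would use that $\theta$ is the adjoint of the continuous linear map $\phi:\varinjlim\mathcal A_\alpha^*\to(\mathcal A^*,\tau_b)$, and that $\phi$ is surjective because $\mathcal A^*$ is, as a vector space, exactly $\varinjlim\mathcal A_\alpha^*$ (the Mackey-duality of reduced projective limits used earlier). The adjoint of a surjective continuous linear map is injective, so $\theta$ is injective. I expect this identification of $\mathcal A^*$ with the algebraic inductive limit $\varinjlim\mathcal A_\alpha^*$ — and hence the surjectivity of $\phi$ — to be the only genuinely delicate ingredient, since it is where the locally convex structure (as opposed to the normed case) really enters; everything else is formal.

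Finally I would conclude: for $u,v\in\mathcal A^{**}$ we have
\[
\theta(u\,\square\,v)=\theta(u)\cdot\theta(v)=\theta(u\,\diamond\,v),
\]
where $\cdot$ is the unique Arens product on $\varprojlim\mathcal A_\alpha^{**}$, the two middle expressions being equal precisely because $\square=\diamond$ on $\varprojlim\mathcal A_\alpha^{**}$. Since $\theta$ is injective, $u\,\square\,v=u\,\diamond\,v$ for all $u,v$, i.e.\ $\mathcal A$ is Arens regular, which is the assertion of the corollary.
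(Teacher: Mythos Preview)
Your proposal is correct and follows essentially the same route as the paper: the paper also assembles the $f_\alpha^{**}$ into the map $\theta:\mathcal A^{**}\to\varprojlim\mathcal A_\alpha^{**}$, observes that $\theta$ is a $*$-homomorphism for both Arens products, establishes injectivity of $\theta$ by identifying it as the adjoint of the surjection $\phi:\varinjlim\mathcal A_\alpha^*\to(\mathcal A^*,\tau_b)$, and then concludes $\square=\diamond$ exactly as you do. Your remark that the only delicate point is the vector-space identification $\mathcal A^*\cong\varinjlim\mathcal A_\alpha^*$ is apt and matches what the paper takes for granted from the preliminaries.
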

Let $\pi_\alpha:\mathcal A_\alpha\to B(\mathcal K_\alpha)$ be the universal representation of the $C^*$-algebra $\mathcal A_\alpha$ on some Hilbert space $\mathcal K_\alpha$. By Sherman-Takeda Theorem, $\pi_\alpha$ extends to a continuous faithful $*$- representation $\pi_\alpha^{''}:\mathcal A_\alpha^{**}\to B(\mathcal K_\alpha)$, the image of which is $\overline{\pi_\alpha(\mathcal A)}^{WOT}$. And hence, we get a continuous $*$-representation $\pi'':\varprojlim\mathcal A^{**}_\alpha\to B_{loc}(\mathcal H)$, where $\mathcal H=\coprod\mathcal K_\alpha$ is a locally Hilbert space equipped with inductive limit topology. Clearly, the $\pi''(\varprojlim\mathcal A_\alpha^{**})=\overline{\pi''(\mathcal A )}^{WOT}$ (due to \cite[Th. 3.6, Prop. 3.15]{Joita}).\\

 Consider the injective continuous $*$-representation of bidual given by $\varphi=\pi''\circ \theta: \mathcal A^{**}\to B_{loc}(\mathcal H)$. One need to be careful while dealing with $\varphi$. In case of $C^*$-algebras, $\varphi$ is nothing but the classical isometric embedding of bidual of $C^*$-algebra into the space of bounded operators on a Hilbert space and the image of this representation is whole $\overline{\pi''(\mathcal A)}^{WOT}$ (the classic Sherman-Takeda representation in theorem \ref{STT}). But in the locally $C^*$-algebra settings, $\varphi$ is just a faithful continuous $*$-representation of $\mathcal A^{**}$ into $\overline{\pi''(\mathcal A)}^{WOT}$ (i.e., the inverse map $\varphi^{-1}$ from $\varphi(\mathcal A^{**})$ to $\mathcal A^{**}$ may not be continuous and $\pi''(\mathcal A^{**})$ may be a strict subspace of $\overline{\pi(\mathcal A)}^{WOT}$, i.e., $\varphi(\mathcal A^{**})\subset \overline{\pi''(\mathcal A)}^{WOT}$). \\

Recall that for a given bilinear map $m:X\times Y\to Z$ on vector space $X,Y$ and $Z$, we can define its adjoint map $m^*:Z^*\times X\to Y^*$ as $m^*(z^*,x)(y)=z^*(m(x,y))$.
\begin{lemma}\label{weak-wot}
If $\{a^{(i)}\}_{i\in\mathcal I}$ is a (von Neumann) bounded net in $\mathcal A$ such that $J({a^{(i)}})\overset{\sigma(\mathcal A^{**},\mathcal A^*)}{\looongrightarrow}u\in \mathcal A^{**}$, then $\varphi(u)=\underset{WOT}{\lim} \pi(a^{(i)})$.
\end{lemma}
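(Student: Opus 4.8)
The plan is to unwind the three maps $\theta$, $\pi''$ and $\varphi=\pi''\circ\theta$ on the net $\{a^{(i)}\}$ and compare the two limits coordinate by coordinate in the locally Hilbert space $\mathcal H=\coprod_\alpha\mathcal K_\alpha$. Fix $\alpha\in\Lambda$ and set $b^{(i)}_\alpha=f_\alpha(a^{(i)})\in\mathcal A_\alpha$. First I would observe that the $\sigma(\mathcal A^{**},\mathcal A^*)$-convergence $J(a^{(i)})\to u$ transports, via the $w^*$-$w^*$-continuous $*$-homomorphism $f_\alpha^{**}$, to the convergence $J_\alpha(b^{(i)}_\alpha)\to f_\alpha^{**}(u)$ in $\sigma(\mathcal A_\alpha^{**},\mathcal A_\alpha^*)$; indeed for $a'\in\mathcal A_\alpha^*$ we have $\langle J_\alpha(b^{(i)}_\alpha),a'\rangle=\langle J(a^{(i)}),f_\alpha^*(a')\rangle\to\langle u,f_\alpha^*(a')\rangle=\langle f_\alpha^{**}(u),a'\rangle$. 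Since $\theta$ is the map into $\varprojlim\mathcal A_\alpha^{**}$ whose $\alpha$-th coordinate is $f_\alpha^{**}$, the $\alpha$-th coordinate of $\theta(u)$ is exactly $f_\alpha^{**}(u)$.

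Next I would invoke the classical Sherman-Takeda situation at level $\alpha$: the boundedness of $\{a^{(i)}\}$ in $\mathcal A$ means $\sup_i\|b^{(i)}_\alpha\|_{\mathcal A_\alpha}=\sup_iP_\alpha(a^{(i)})<\infty$, so $\{\pi_\alpha(b^{(i)}_\alpha)\}$ is a norm-bounded net in $B(\mathcal K_\alpha)$. The standard fact for the universal representation of a $C^*$-algebra is that $\sigma(\mathcal A_\alpha^{**},\mathcal A_\alpha^*)$-convergence of a bounded net, pushed through $\pi_\alpha''$, becomes WOT-convergence in $B(\mathcal K_\alpha)$ (the predual of $\pi_\alpha''(\mathcal A_\alpha^{**})=\overline{\pi_\alpha(\mathcal A_\alpha)}^{WOT}$ is a quotient of $\mathcal A_\alpha^*$, and WOT agrees with the $\sigma(\cdot,\cdot_*)$-topology on bounded sets). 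Hence $\pi_\alpha''(f_\alpha^{**}(u))=\mathrm{WOT}\text{-}\lim_i\pi_\alpha(b^{(i)}_\alpha)$ in $B(\mathcal K_\alpha)$. By the construction of $\pi''$ as the coordinatewise assembly of the $\pi_\alpha''$, the restriction of $\varphi(u)=\pi''(\theta(u))$ to $\mathcal K_\alpha$ equals $\pi_\alpha''(f_\alpha^{**}(u))$, and likewise $\pi(a^{(i)})|_{\mathcal K_\alpha}=\pi_\alpha(b^{(i)}_\alpha)$ (up to the identification built into the universal representation $\pi$). Therefore $\varphi(u)|_{\mathcal K_\alpha}=\mathrm{WOT}\text{-}\lim_i\pi(a^{(i)})|_{\mathcal K_\alpha}$ for every $\alpha$.

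Finally I would assemble the coordinatewise statements into a single WOT-statement in $B_{loc}(\mathcal H)$. Here I use the definition of WOT on $B_{loc}(\mathcal H)$ from \cite{Joita}: a net $T^{(i)}\to T$ in WOT iff $\langle T^{(i)}h,k\rangle\to\langle Th,k\rangle$ for all $h,k$ lying in some common $\mathcal K_\alpha$; since every such pair $h,k$ sits in a single $\mathcal K_\alpha$ and the $\alpha$-level convergence is already established, the assembled net $\pi(a^{(i)})$ converges to $\varphi(u)$ in WOT. I expect the main obstacle to be purely bookkeeping rather than conceptual: namely carefully matching the two a-priori different concrete realizations of the locally Hilbert space and the operators $\pi(a^{(i)})$ versus the $\oplus_{\gamma\le\alpha}\pi_\gamma(a_\gamma)$ appearing in the definition of the universal representation, and confirming that the identification of $\mathcal A^*$ with $\varinjlim\mathcal A_\alpha^*$ used in defining $\theta=\phi^*$ is exactly the one under which the duality computation in the first paragraph is valid. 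Once those identifications are pinned down, the three displayed equalities chain together to give $\varphi(u)=\mathrm{WOT}\text{-}\lim_i\pi(a^{(i)})$.
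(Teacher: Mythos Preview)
Your argument is correct, but it follows a genuinely different route from the paper's. The paper works intrinsically with the bilinear map $m:\mathcal A\times\mathcal H\to\mathcal H$, $m(a,h)=\pi''(a)h$, identifies $\varphi(u)h$ with $m^{***}(u,h)$, and then unwinds the chain of adjoints $m^{***},m^{**},m^{*}$ against an arbitrary pairing $\langle\,\cdot\,,(k_\alpha)\rangle$ to obtain $\langle\varphi(u)h_\beta,(k_\alpha)\rangle=\lim_i\langle\pi''(a^{(i)})h_\beta,k_\beta\rangle$ directly; boundedness plus a componentwise Banach--Steinhaus argument then shows the WOT limit exists and equals $\varphi(u)$. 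You instead push everything through the coordinate projections $f_\alpha^{**}$, reduce to the classical $C^*$-situation at each level, and invoke the normality of the Sherman--Takeda extension $\pi_\alpha''$ (i.e.\ its $\sigma(\mathcal A_\alpha^{**},\mathcal A_\alpha^*)$-to-ultraweak continuity, hence $w^*$-to-WOT continuity) to get the levelwise WOT limit, then reassemble. Your approach is more structural and in fact makes the boundedness hypothesis nearly superfluous at the key step (ultraweak convergence implies WOT convergence regardless), whereas the paper's direct computation genuinely needs boundedness to manufacture the limit operator via Banach--Steinhaus. The only point to tighten is the one you already flag: in the paper's Section~3 the representation on $\mathcal K_\alpha$ is the amplified $\overline\pi_\alpha=\bigoplus_{\gamma\le\alpha}\pi_\gamma$, not $\pi_\alpha$ itself, so your ``classical Sherman--Takeda at level $\alpha$'' must be applied to $\overline\pi_\alpha$ (or, equivalently, one notes that $\overline\pi_\alpha$ still factors through a faithful representation of $\mathcal A_\alpha$ containing the universal one as a summand, so its bidual extension is again normal). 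Once that identification is made explicit, your proof goes through cleanly.
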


\begin{proof}
	Consider the bilinear map $m:\mathcal A\times \mathcal H \to \mathcal H$, defined as $m(a,h)=\pi''(a)h$. Then one can easily show through a little exercise that $\varphi(u)h=m^{***}(u,h)$ for any $u\in \mathcal A^{**}$ and $h\in \mathcal H$. Let $(k_\alpha)\in \mathcal H^*=\varprojlim \mathcal K_\alpha$ be any arbitrary element and $h_\beta\in \mathcal H_\beta$. Then, 
	\begin{align*}
		\left<\varphi(u)h_\beta,(k_\alpha)\right>&=\left<m^{***}(u,h_\beta),(k_\alpha)\right>\\&=\left<u,m^{**}(h_\beta,(k_\alpha))\right>\\
		&=\lim_i \left<J(a^{(i)}), m^{**}\bigl(h_\beta, (k_\alpha)\bigr)\right>\\&=\lim_i\left<h_\beta,m^*\bigl((k_\alpha), a^{(i)}\bigr)\right>\\
		&=\lim_i\left<(k_\alpha),m(a^{(i)},h_\beta)\right>\\&=\lim_i\left<\pi''(a^{(i)})h_\beta,(k_\alpha)\right>\\&=\lim_i\left<\pi''(a^{(i)})h_\beta, k_\beta\right>
	\end{align*}
	The above equation tells us that $\{\pi''(a^{(i)})\}_{i\in \mathcal I}$ is a Cauchy net in WOT topology of $B_{loc}(\mathcal H)$, and it is immediate consequence of Banach Steinhauss Theorem (applied component wise) that bounded WOT-Cauchy nets of operators in $B_{loc}(\mathcal H)$ converges to an operator in WOT. Hence, $\{\pi''(a^{(i)})\}_{i\in \mathcal I}$ converges to some $T_u\in B_{loc}(\mathcal H)$ in WOT. Thus from above equation we conclude that $\varphi(u)=T_u=\underset{WOT}{\lim}\pi(a^{(i)})\in \overline{\pi''(\mathcal A)}^{WOT} $.
\end{proof}
An operator $T\in B_{loc}(\mathcal H)$ is said to be \emph{strongly bounded} if there exists a $M>0$ such that $\|T_{\mathcal K_\alpha}\|_{B(\mathcal K_\alpha)}\leq M$ for each $\alpha\in \Lambda$.
\begin{theorem}If $\mathcal A=\varprojlim_{\alpha\in \Lambda} \mathcal A_\alpha$ is a locally $C^*$-algebra then 
$\varphi(\mathcal A^{**})$ contains all the strongly bounded elements of $\overline{\pi''(\mathcal A)}^{WOT}$, i.e., if $T\in \overline{\pi''(\mathcal A)}^{WOT}$ and $T$ is strongly bounded then $T\in \varphi(\mathcal A^{**})$
\end{theorem}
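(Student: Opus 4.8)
The plan is to reduce the assertion to self-adjoint operators, approximate such a $T$ by a net coming from $\mathcal A$ that converges to $T$ in the strong operator topology and is simultaneously \emph{bounded in $\mathcal A$ itself}, lift this net to $\mathcal A^{**}$, extract a weak-$*$ convergent subnet there, and finally use Lemma~\ref{weak-wot} to identify its image under $\varphi$ with $T$. For the reduction: recall $\overline{\pi''(\mathcal A)}^{WOT}=\pi''(\varprojlim\mathcal A_\alpha^{**})$ is a $*$-algebra and the involution on $B_{loc}(\mathcal H)$ is WOT-continuous, so if $T$ is strongly bounded with constant $M$ then $\tfrac12(T+T^*)$ and $\tfrac1{2i}(T-T^*)$ are self-adjoint elements of $\overline{\pi''(\mathcal A)}^{WOT}$, again strongly bounded by $M$ since $\|(T^*)_{|\mathcal K_\alpha}\|=\|T_{|\mathcal K_\alpha}\|$ for every $\alpha$. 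As $\varphi$ is a $*$-representation, $\varphi(\mathcal A^{**})$ is a $*$-invariant linear subspace of $B_{loc}(\mathcal H)$, so it suffices to treat $T=T^*$.

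Next, for self-adjoint strongly bounded $T\in\overline{\pi''(\mathcal A)}^{WOT}=\overline{\pi(\mathcal A)}^{WOT}$ with $\|T_{|\mathcal K_\alpha}\|\le M$ for all $\alpha$, I would note that $\pi(\mathcal A)$ is convex, so its WOT- and SOT-closures coincide, whence $T\in\overline{\pi(\mathcal A)}^{SOT}$. Applying Theorem~\ref{totbound}(2) to the locally $C^*$-subalgebra $\pi(\mathcal A)\subseteq B_{loc}(\mathcal H)$ with the single uniform constant $M$ then produces a net $\{\pi(b^{(i)})\}_{i\in\mathcal I}$ with $b^{(i)}\in\mathcal A_{sa}$, with $\pi(b^{(i)})\to T$ in SOT, and with $P_\alpha(b^{(i)})=\|\pi(b^{(i)})_{|\mathcal K_\alpha}\|\le M$ for every $\alpha$ and every $i$. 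The gain from keeping the bound uniform in $\alpha$ is decisive: since the seminorms $\{P_\alpha\}$ generate the topology of $\mathcal A$, the inequality $\sup_\alpha P_\alpha(b^{(i)})\le M$ says precisely that $B:=\{b^{(i)}:i\in\mathcal I\}$ is a von Neumann bounded subset of $\mathcal A$.

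Finally I would lift to the bidual. The bipolar $B^{\circ\circ}$ of $B$ with respect to the dual pairing $(\mathcal A^*,\mathcal A^{**})$ is the polar of a $\tau_b$-neighbourhood of $0$ in $\mathcal A^*$, hence $\sigma(\mathcal A^{**},\mathcal A^*)$-compact by Banach--Alaoglu--Bourbaki, and $J(B)\subseteq B^{\circ\circ}$; so $\{J(b^{(i)})\}$ has a subnet $\sigma(\mathcal A^{**},\mathcal A^*)$-converging to some $u\in\mathcal A^{**}$. Passing to this subnet keeps both $\pi(b^{(i)})\to T$ in SOT (hence in WOT) and the von Neumann boundedness of $B$, so Lemma~\ref{weak-wot} gives that the WOT-limit of $\pi(b^{(i)})$ is $\varphi(u)$; by uniqueness of WOT-limits this limit is $T$, and therefore $T\in\varphi(\mathcal A^{**})$.

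I expect the main obstacle to be exactly this bridge between Theorem~\ref{totbound} and Lemma~\ref{weak-wot}: the former only yields an SOT-approximating net of \emph{operators}, whereas the latter requires a net in $\mathcal A$ that is von Neumann bounded \emph{there} and whose canonical images are weak-$*$ precompact in $\mathcal A^{**}$. The resolution is the observation that a bound uniform across all the $C^*$-seminorms $P_\alpha$ is exactly boundedness in $\mathcal A$, after which the equicontinuity/Banach--Alaoglu argument in $\mathcal A^{**}$ supplies the needed weak-$*$ limit; note that the non-surjectivity of $\theta$ (hence of $\varphi$) is irrelevant here, since we only need to exhibit one preimage of $T$.
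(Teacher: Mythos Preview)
Your proof is correct and follows essentially the same route as the paper: use Theorem~\ref{totbound} to approximate $T$ by a bounded net from $\pi(\mathcal A)$, observe that uniform control of all $P_\alpha$ is precisely von Neumann boundedness in $\mathcal A$, extract a $\sigma(\mathcal A^{**},\mathcal A^*)$-convergent subnet by Alaoglu, and invoke Lemma~\ref{weak-wot}. Your explicit reduction to the self-adjoint case is in fact a small improvement, since Theorem~\ref{totbound}(2) is stated only for self-adjoint elements and the paper's proof applies it to a general $T$ without comment.
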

\begin{proof}
	Suppose $T\in \overline{\pi''(\mathcal A)}^{WOT}$ and $T$ is strongly bounded. Then, there exists a $M>0$ such that $\|T_{|\mathcal K_\alpha}\|_{B(\mathcal K_\alpha)}\leq M$ for each $\alpha\in \Lambda$. Since $\overline{\pi''(\mathcal A)}^{WOT}=\overline{\pi''(\mathcal A)}^{SOT}$, using theorem \ref{totbound}, we can find a net of operators $\{\pi''(a^{(i)})\}_{i\in \mathcal I}$ which converges to $T$ in WOT such that $a^{(i)}\in \mathcal A$ for each $i\in \mathcal I$ and $\|\pi''(a^{{i}})_{\mathcal K_\alpha}\|_{B(\mathcal K_\alpha)}\leq M$ for some fixed $M>0$, each $i\in \mathcal I$ and each $\alpha\in \Lambda$. Clearly the $\{a^{(i)}\}_{i\in \mathbb N}$ is (von Neumann) bounded and hence by \cite[23.2.4,p298]{Kothe} the set $\{J(a^{(i)})\}_{i\in \mathcal I}$ is $\sigma(\mathcal A^{**},\mathcal A^{*})$-relatively compact. Hence, without loss of generality we can assume that $\{J(a^{(i)})\}_{i\in \mathcal I}$ converges to some $u\in \mathcal A^{**}$ in the $\sigma(\mathcal A^{**},\mathcal A^{*})$-topology. Now, by lemma \ref{weak-wot}, we conclude that $\varphi(u)=T$.
\end{proof}

We now show that if $\mathcal A$ is a Fr\'echet locally $C^*$-algebra having Kaplanski density property, then $\mathcal A^{**}$ is topologically $*$-isomorphic to the algebra $\overline{\pi''(\mathcal A)}^{WOT}$.

	\begin{theorem}\label{main-new}
		If $\mathcal A=\varprojlim_{r\in \mathbb N} \mathcal A_r$ is a  Fr\'echet locally $C^*$-algebra having KDP, then its strong bidual $\mathcal A^{**}$ is topologically and algebraically $*$-isomorphic to the locally von Neumann algebra $\overline{\pi''(\mathcal A)}^{WOT}=\varprojlim\mathcal A_r^{**}$.
	\end{theorem}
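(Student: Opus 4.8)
The plan is to show that the continuous injective $*$-homomorphism $\varphi=\pi''\circ\theta\colon\mathcal A^{**}\to B_{loc}(\mathcal H)$ (which already satisfies $\varphi(\mathcal A^{**})\subseteq\overline{\pi''(\mathcal A)}^{WOT}$) is in fact a topological $*$-isomorphism of $\mathcal A^{**}$ onto $\overline{\pi''(\mathcal A)}^{WOT}=\varprojlim_r\mathcal A_r^{**}$. First I would record that $\pi''$ restricts to a topological $*$-isomorphism of $\varprojlim_r\mathcal A_r^{**}$ onto $\overline{\pi''(\mathcal A)}^{WOT}$: it intertwines the seminorm $(T_s)_s\mapsto\max_{s\le r}\|T_s\|$ defining the projective-limit topology with the seminorm $T\mapsto\|T_{|\mathcal K_r}\|$ on $B_{loc}(\mathcal H)$, and its range is $\overline{\pi''(\mathcal A)}^{WOT}$ by \cite{Joita}. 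Hence it suffices to prove (i) $\varphi(\mathcal A^{**})=\overline{\pi''(\mathcal A)}^{WOT}$, and (ii) that $\theta\colon\mathcal A^{**}\to\varprojlim_r\mathcal A_r^{**}$ is an open map.

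For (i), write $\mathfrak A=\overline{\pi(\mathcal A)}^{WOT}=\overline{\pi(\mathcal A)}^{SOT}=\overline{\pi''(\mathcal A)}^{WOT}$ and fix $S\in\mathfrak A$. Since $\mathfrak A$ is a $*$-closed subalgebra, its real and imaginary parts $R=\tfrac12(S+S^*)$ and $I=\tfrac1{2i}(S-S^*)$ lie in $\mathfrak A_{sa}$, and as elements of $B_{loc}(\mathcal H)$ they have bounded restriction to each $\mathcal K_r$. I would set $M_r=\max\bigl(1,\ \max_{s\le r}\|R_{|\mathcal K_s}\|,\ \max_{s\le r}\|I_{|\mathcal K_s}\|\bigr)$, a nondecreasing sequence of positive reals with $\|R_{|\mathcal K_r}\|\le M_r$ and $\|I_{|\mathcal K_r}\|\le M_r$. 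Applying KDP to $R$ produces a net $\{a^{(i)}\}_i$ in $\mathcal A_{sa}$ with $\|a^{(i)}_r\|\le M_r$ for all $r$ and $\pi(a^{(i)})\to R$ in SOT, hence in WOT. The bounds $\|a^{(i)}_r\|\le M_r$ say precisely that $P_r(a^{(i)})=\|\overline\pi_r(a^{(i)})\|\le M_r$ for every $r$, so $\{a^{(i)}\}$ is a (von Neumann) bounded net in $\mathcal A$; by \cite[23.2.4]{Kothe} the set $\{J(a^{(i)})\}$ is relatively $\sigma(\mathcal A^{**},\mathcal A^*)$-compact, so after passing to a subnet I may assume $J(a^{(i)})\to u\in\mathcal A^{**}$ in $\sigma(\mathcal A^{**},\mathcal A^*)$. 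The subnet of $\{\pi(a^{(i)})\}$ still converges to $R$ in WOT, so Lemma~\ref{weak-wot} gives $\varphi(u)=R$. Repeating the argument for $I$ yields $v\in\mathcal A^{**}$ with $\varphi(v)=I$, and then $\varphi(u+iv)=R+iI=S$. Thus $\varphi(\mathcal A^{**})=\overline{\pi''(\mathcal A)}^{WOT}$; equivalently $\theta$ is a $*$-isomorphism of $\mathcal A^{**}$ onto $\varprojlim_r\mathcal A_r^{**}$.

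For (ii), this is where the Fréchet hypothesis is essential. Because $\mathcal A$ is Fréchet, hence metrizable, its strong dual $\mathcal A^*$ is a DF-space, and therefore $\mathcal A^{**}$ (with the strong topology) is a Fréchet space, since the strong dual of a DF-space is Fréchet. On the other hand $\varprojlim_{r\in\mathbb N}\mathcal A_r^{**}$ is a countable projective limit of Banach spaces, i.e.\ a closed subspace of the Fréchet space $\prod_r\mathcal A_r^{**}$, and so is itself Fréchet. Thus $\theta$ is a continuous linear bijection between Fréchet spaces, and the open mapping theorem forces it to be a topological isomorphism. Composing with the topological $*$-isomorphism $\pi''$ then shows that $\varphi\colon\mathcal A^{**}\to\overline{\pi''(\mathcal A)}^{WOT}=\varprojlim_r\mathcal A_r^{**}$ is an algebraic and topological $*$-isomorphism.

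The hard part will be step (i): converting the abstract KDP approximation into a genuine element of the bidual. One must split $S$ into self-adjoint pieces (KDP is stated only for self-adjoint elements), choose the bounding sequence $\{M_r\}$ nondecreasing and simultaneously dominating the norms of $R$ and $I$ on each $\mathcal K_r$, verify that the KDP-net is bounded in the locally convex topology of $\mathcal A$ (which uses the explicit form $P_r(a)=\|\overline\pi_r(a)\|$ of the seminorms), extract a $\sigma(\mathcal A^{**},\mathcal A^*)$-convergent subnet via the boundedness--compactness principle, and keep careful track of the WOT-limit of its image when applying Lemma~\ref{weak-wot}. Step (ii) is then a short, routine invocation of the open mapping theorem for Fréchet spaces.
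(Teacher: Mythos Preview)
Your proposal is correct and follows essentially the same approach as the paper: reduce surjectivity to the self-adjoint case, use KDP to produce a bounded approximating net, extract a $\sigma(\mathcal A^{**},\mathcal A^*)$-convergent subnet via relative compactness and invoke Lemma~\ref{weak-wot}, then conclude with the open mapping theorem for Fr\'echet spaces. The only differences are cosmetic---you make the real/imaginary decomposition and the nondecreasing choice of $\{M_r\}$ explicit, whereas the paper simply asserts that it suffices to treat self-adjoint $T$ and takes $M_r=\|T_{|\mathcal K_r}\|$ directly.
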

\begin{proof}
	First we will show that the map $\varphi:\mathcal A^{**}\to B_{loc}(\mathcal H)$ is surjective onto $\overline{\pi''(\mathcal A)}^{WOT}$, where $\mathcal H=\coprod_{r\in \mathbb N}\mathcal K_r$ and $\mathcal K_r$ is the associated Hilbert space of the universal representation of $C^*$-algebras $\mathcal A_r$ for each $r\in \mathbb N$. It would suffice to show that each self-adjoint operator in $\overline{\pi''(\mathcal A)}^{WOT}$ has a pre-image in the map $\varphi$. Let $T\in \overline{\pi''(\mathcal A)}^{WOT}$ be a self-adjoint operator. Then, due to KDP, we get a net $\{a^{(i)}\}_{i\in \mathcal I}$ of self adjoint elements in $\mathcal A$ such that $\pi''(a^{(i)})\longmapsto T$ in SOT (and hence in WOT) such that for any $i\in \mathcal I$ and $r\in \mathbb N$, we have $\|\pi''(a^{(i)})_{|\mathcal K_r}\|_{B(\mathcal K_r)}\leq \|T_{\mathcal K_r}\|_{B(\mathcal K_r)}$. Hence, the net $\{a^{(i)}\}_{i\in \mathcal I}$ is (von Neumann) bounded in $\mathcal A$. Therefore, $\{J(a^{(i)})\}_{i\in \mathcal I}$ is $\sigma(\mathcal A^{**},\mathcal A^*)$-relatively compact in $\mathcal A^{**}$. Without loss of generality we can assume that it converges to some $u\in \mathcal A^{**}$ in the $\sigma(\mathcal A^{**},\mathcal A^*)$ topology. Thus, by theorem \ref{weak-wot}, $\varphi(u)=T$. Since $\overline{\pi''(\mathcal A)}^{WOT}= \varprojlim\mathcal A^{**}_r$ (\cite[Prop. 3.14]{Joita}), it is a Fr\'echet space. Further it is known that, strong dual of a Fr\'echet space is a DF-space and strong dual of a DF-space is a Fr\'echet space and hence $\mathcal A^{**}$ is a Fr\'echet space. Thus, $\varphi:\mathcal A^{**}\to \overline{\pi''(\mathcal A)}^{WOT}$ is a continuous bijection between Fr\'echet spaces and hence an open map (due to open mapping theorem). This establishes the topological embedding of $\mathcal A^{**}$ via  $\varphi:\mathcal A^{**}\to B_{loc}(\mathcal H)$.
	\end{proof}
	\section{Continuity of some bilinear maps associated to a representation of locally $C^*$-algebras.}	
Let $\mathcal A$ be a locally $C^*$-algebra. Notice that the universal representation $\pi:\mathcal A\to B_{loc}(\mathcal H)$ gives rise to a separately continuous bilinear map $\theta:\mathcal A\times\mathcal H\to\mathcal H$, defined as $\theta(a,h)=\pi(a)(h)$ for $a\in \mathcal A$ and $h\in \mathcal H$. Let $b(\mathcal A)$ denote the bounded part of $\mathcal A$ i.e., $(a_\alpha)\in b(\mathcal A)$ iff there exists a $M>0$ such that $\|a_\alpha\|\leq M$ for each $\alpha\in \Lambda$. Then $b(\mathcal A)$ is a $*$-subalgebra of $\mathcal A$ and the subspace topology is generated by a norm $\|(a_\alpha)\|=\sup_\alpha\|a_\alpha\|$.\\

A bilinear map $m:\mathcal E\times \mathcal F\to \mathcal G$ on topological vector spaces $\mathcal E,\mathcal F, \mathcal G$ is said to be hypocontinuous if for any bounded subset $B$ of $\mathcal E$ (or $\mathcal F$) and a $0$-neighbourhood $W$ of $\mathcal G$, there exists a $0$-neighborhood $V$ of $\mathcal E$ (or $\mathcal F$) such that $m(B,V)\subset W$ (or $m(V,B)\subset W$). We have some interesting observations regarding $\theta$ and their adjoints. 
	\begin{lemma}\label{bilnet}
		For a Fr\'echet locally $C^*$-algebra $\mathcal A$, the following statements hold true.
		\begin{enumerate}
			\item  The bilinear map $\theta$ is hypocontinuous.	The bilinear map $\theta_{|b(\mathcal A)\times \mathcal H}:b(\mathcal A)\times \mathcal H\to \mathcal H$ is jointly continuous.
			\item The adjoint map $\theta^*:\mathcal H^*\times \mathcal A\to \mathcal H^*$ is jointly continuous.
			\item The third adjoint map	$\theta^{***}:\mathcal A^{**}\times \mathcal H\to\mathcal H$ is jointly continuous iff $\theta$ is jointly continuous. 
			
		\end{enumerate}
	\end{lemma}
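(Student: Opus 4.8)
\emph{Proof plan.} The plan is to treat the three parts in order; I first record the standing facts for a Fr\'echet locally $C^*$-algebra, writing $\mathcal H=\coprod_{r\in\mathbb N}\mathcal K_r$ for the locally Hilbert space of the universal representation. These are: $\mathcal A$ is barrelled; $\mathcal H$ is a countable inductive limit of Hilbert spaces, hence barrelled, reflexive, and a regular inductive limit (every bounded subset, and every point, lies in a single $\mathcal K_r$); $\mathcal H^*$ is Fr\'echet; and $\mathcal A^{**}$ is Fr\'echet, being the strong dual of the DF-space $\mathcal A^*$. For (1), since $\theta$ is separately continuous and both $\mathcal A$ and $\mathcal H$ are barrelled, $\theta$ is hypocontinuous by the classical theorem on separately continuous bilinear maps over products of barrelled spaces (see \cite{Treves}). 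For the restriction to $b(\mathcal A)\times\mathcal H$, I would use that the unit ball of $b(\mathcal A)$ is a bounded subset of $\mathcal A$ (as $P_\alpha\le\|\cdot\|_{b(\mathcal A)}$ for all $\alpha$) and that $b(\mathcal A)\hookrightarrow\mathcal A$ is continuous; splitting $\theta(a_0+u,h_0+v)-\theta(a_0,h_0)=\theta(u,h_0)+\theta(a_0,v)+\theta(u,v)$ for $(a_0,h_0)\in b(\mathcal A)\times\mathcal H$, the first term is controlled by $\|u\|_{b(\mathcal A)}\|h_0\|$ inside the Hilbert space $\mathcal K_\beta$ containing $h_0$, the second by separate continuity, and the third by hypocontinuity applied to the bounded ball of $b(\mathcal A)$, which yields joint continuity there.

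For (2), unwinding the definition gives $\theta^*(k,a)={}^{t}\pi(a)(k)$, the transpose of the operator $\pi(a)\in B_{loc}(\mathcal H)$ evaluated at $k$; in particular $\theta^*$ maps into $\mathcal H^*$. As $\mathcal H^*$ and $\mathcal A$ are both Fr\'echet, hence Baire and metrizable, it suffices to prove $\theta^*$ separately continuous and then invoke the theorem that a separately continuous bilinear map on the product of a Baire space and a metrizable space is jointly continuous. Separate continuity in $k$ is the continuity of the transpose ${}^{t}\pi(a)$ for the strong dual topologies; separate continuity in $a$ follows from the estimate $\sup_{h\in B}|\theta^*(k,a)(h)|\le P_\gamma(a)\,(\sup_{h\in B}\|h\|)\,\|k_\gamma\|$, valid whenever $B\subseteq\mathcal H$ is bounded (so $B\subseteq\mathcal K_\gamma$) and $k=(k_\alpha)\in\mathcal H^*$, which exhibits $a\mapsto\theta^*(k,a)$ as continuous into $(\mathcal H^*,\tau_b)$.

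For (3), one implication is formal: $J\colon\mathcal A\to\mathcal A^{**}$ is continuous and a direct computation with iterated adjoints (using $\mathcal H^{**}=\mathcal H$) gives $\theta^{***}(J(a),h)=\theta(a,h)$, so $\theta=\theta^{***}\circ(J\times\mathrm{id}_{\mathcal H})$ and joint continuity of $\theta^{***}$ forces that of $\theta$. For the converse, assume $\theta$ jointly continuous; then it is continuous at the origin, so there are barrels $U\subseteq\mathcal A$, $V,W\subseteq\mathcal H$ with $\theta(U,V)\subseteq W$. I would dualize this inclusion three times: it yields $\theta^*(W^\circ,U)\subseteq V^\circ$, then $\theta^{**}(V,W^\circ)\subseteq U^\circ$ (using $V^{\circ\circ}=V$), then $\theta^{***}(U^{\circ\circ},V)\subseteq W^{\circ\circ}=W$, where $U^{\circ\circ}$ is the polar of $U^\circ$ in $\mathcal A^{**}$ and $W^{\circ\circ}$ the polar of $W^\circ$ in $\mathcal H^{**}=\mathcal H$. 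Since $U^\circ$ is bounded in $\mathcal A^*$, $U^{\circ\circ}$ is a $0$-neighbourhood for the strong topology on $\mathcal A^{**}$, and $W^{\circ\circ}=W$ by the bipolar theorem together with reflexivity of $\mathcal H$; thus $\theta^{***}$ is continuous at the origin. Together with the separate continuity of $\theta^{***}$ — in $h$ it is the action of $\varphi(u)\in B_{loc}(\mathcal H)$ as recorded before Lemma \ref{weak-wot}; in $u$, for $h\in\mathcal K_\beta$ the functional $\theta^{**}(h,\cdot)$ factors through $\mathcal K_\beta^*$ by a bounded linear map into $\mathcal A^*$, so $u\mapsto\theta^{***}(u,h)$ is a transpose followed by $\iota_\beta$ — bilinearity upgrades origin-continuity to joint continuity.

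I expect the backward direction of (3) to be the main obstacle: correctly tracking in which dual pairing each polar is taken, and isolating exactly where reflexivity of $\mathcal H$ and the Fr\'echet/barrelled hypothesis on $\mathcal A$ enter, so that $\theta^{***}$ genuinely lands in $\mathcal H$ and the double polars collapse back to the original neighbourhoods. The remaining ingredients are either seminorm estimates with the $P_\gamma$ or standard locally convex facts already quoted in the preliminaries.
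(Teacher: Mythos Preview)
Your proposal is correct and follows essentially the same route as the paper: hypocontinuity from barrelledness of $\mathcal A$ and $\mathcal H$ in (1), the metrizable--times--Baire criterion applied to the Fr\'echet spaces $\mathcal H^*$ and $\mathcal A$ in (2), and the identical polar chase $\theta(U,V)\subseteq W\Rightarrow\theta^{***}(U^{\circ\circ},V)\subseteq W^{\circ\circ}$ in (3), together with $\theta=\theta^{***}\circ(J\times\mathrm{id})$ for the easy direction. The only difference is in the second half of (1): you obtain joint continuity on $b(\mathcal A)\times\mathcal H$ by the three-term splitting and the hypocontinuity already in hand, whereas the paper instead shows that $\{\pi(a):\|a\|_{b(\mathcal A)}\le 1\}$ is pointwise bounded and applies Banach--Steinhaus on the barrelled space $\mathcal H$ to get equicontinuity --- a minor tactical variation with no real gain either way.
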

	
	\begin{proof}
		{(\bf 1.)} Since both $\mathcal A$ and $\mathcal H$ are barrelled, hypocontinuity of $\theta$ follows from $\cite[5.5.2]{Schaefer}$.	Let $W$ be a $0$-neighbourhood in $\mathcal H$. Fix the unit ball $B=\{a\in b(\mathcal A)~:~\|a_\alpha\|\leq 1\forall \alpha\}$, which is itself a $0$-neighbourhood. Then, the family of operators $\mathcal F=\{\pi(a):~a\in B\}$ is pointwise bounded because for any fixed $h\in \mathcal H$, the map $\mathcal A\to \mathcal H$ defined as $a\mapsto \pi(a)h$ is continuous and it maps bounded sets to bounded sets (a continuous map with metrizable domain is always bounded). Since, $\mathcal H$ is barrelled, we can invoke Banach-Steinhaus Theorem and deduce that the family $\mathcal F$ is equi-continuous at $0$. Hence, there exists a $0$-neighbourhood $V$ in $\mathcal H$ such that $\mathcal F(V)\subset W$. Hence, $\theta(B\times V)\subset W$, which implies that $\theta_{|b(\mathcal A)\times \mathcal H}$ is jointly continuous.\\
		
		\noindent{(\bf 2.)} Recall that the inductive limit of a sequence of DF-spaces is a DF-space (see \cite[p196-197]{Schaefer}) and hence $\mathcal H$ is a DF-space. The strong dual of a DF-space is a Fr\'echet space (see \cite[12.4.1]{Jarchow}), i.e., $\mathcal H^*$ is a Fr\'echet space. The continuity of adjoint maps $\theta^*$ follows from the fact that a separately continuous bilinear map from the product of two metrizable space, one of which is baire, to a locally convex space is jointly continuous (\cite[Ch. III, 5.1]{Schaefer}).\\
		
		\noindent{(\bf 3.)}  From \cite[23.4.1-2]{Kothe}, we know that for a barrelled spaces $X$ the neighbourhood base at any point in the second (strong) dual $X^{**}$ is given by the family of polars $\{U^{\circ\circ}\}$, where $\{U\}$ is an absolutely convex neighbourhood base for $X$. Hence, for any $0$-neighbourhood $W^{\circ\circ}\cap \mathcal H$ in $\mathcal H$, using the continuity of $\theta$, we can find neighbourhoods $U$ and $V$ of $\mathcal A$ and $\mathcal H$ respectively such that $\theta(U\times V)\subset W^{\circ\circ}\cap \mathcal H$. Then one can easily notice that $\theta^{***}\bigl(U^{\circ\circ}\times V\bigr)\subset W^{\circ\circ}\cap\mathcal H$. To see this, suppose $u^{\circ\circ}\in U^{\circ\circ}$ and $v\in V$. Then for any $w^\circ\in W^\circ$, we have 
		\begin{equation}\label{polar}
			\theta^{***}(u^{\circ\circ},v)(w^\circ)=u^{\circ\circ}(\theta^{**}(v,w^\circ))
		\end{equation}
		But for any $u\in U$, we have \begin{equation}
			\left|\theta^{**}(v,w^\circ)(u^\circ)\right|=\left|v\left(\theta^{*}(w^\circ,u^\circ)\right)\right|=|w^\circ(\theta(u,v))|\leq 1 
		\end{equation}
		Hence, $\theta^{**}(v,w^\circ)\in U^\circ$. Thus, using the equation \ref{polar}, we conclude that $$\left|\theta^{***}(u^{\circ\circ},v)(w^\circ)\right|\leq 1.$$
		
		This implies that $\theta^{***}(u^{\circ\circ},v)\in W^{\circ\circ}\cap\mathcal H$, i.e., $\theta^{***}(U^{\circ\circ}\times V)\subset W^{\circ\circ}\cap\mathcal H$ and the map $\theta^{***}$ is jointly continuous. The converse holds true because due to the barrelledness of $\mathcal A$ the map $\theta$ is nothing but the restriction of $\theta^{**}$ to $\mathcal A\times \mathcal H$.
	\end{proof}
	Note that, in the above results we still do not know if the second adjoint bilinear map $\theta^{**}$ is jointly continuous or not, even when $\mathcal A$ is a Fr\'echet locally $C^*$-algebra.\\ 
	
	We note the following elementary observation, which in the case of Banach spaces is trivial to prove using norm inequalities, but cannot be taken for granted in the case of topological vector spaces.
	\begin{lemma}\label{biltolin}
		Let $\mathcal U$ and $\mathcal V$ be locally convex topological vector spaces and $m:\mathcal U\times \mathcal V\to \mathcal V$ be a jointly continuous bilinear map. Then the induced map $\hat m:\mathcal U\to B(\mathcal V)$ is continuous, where $B(\mathcal V)$ is the space of continuous linear operators on $\mathcal V$ and it is equipped with topology of uniform convergence on bounded sets of $\mathcal V$.
	\end{lemma}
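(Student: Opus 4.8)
The plan is to unwind the definition of the topology of uniform convergence on bounded sets on $B(\mathcal V)$ and reduce the claim to the elementary fact that in a topological vector space every bounded set is \emph{absorbed} by every neighbourhood of the origin; this is exactly what promotes joint continuity of $m$ to the form of hypocontinuity that continuity of $\hat m$ into $B(\mathcal V)_b$ requires. First I would record the routine preliminaries: for each fixed $u\in\mathcal U$ the map $\hat m(u):=m(u,\cdot):\mathcal V\to\mathcal V$ is linear, and continuous because joint continuity of $m$ forces separate continuity, so $\hat m:\mathcal U\to B(\mathcal V)$ is a well-defined linear map; by linearity it suffices to check continuity of $\hat m$ at $0$. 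I also recall that a base of $0$-neighbourhoods for the topology of uniform convergence on bounded sets on $B(\mathcal V)$ is given by the sets $N(B,W)=\{T\in B(\mathcal V):T(B)\subseteq W\}$, with $B$ ranging over the bounded subsets of $\mathcal V$ and $W$ over an absolutely convex base of $0$-neighbourhoods of $\mathcal V$.

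The core step is then a short manipulation. Fix such a $B$ and $W$. By joint continuity of $m$ there exist $0$-neighbourhoods $U_0\subseteq\mathcal U$ and $V_0\subseteq\mathcal V$ with $m(U_0\times V_0)\subseteq W$. Since $B$ is bounded and $V_0$ is a $0$-neighbourhood, there is $\lambda>0$ with $B\subseteq\lambda V_0$. Set $U:=\lambda^{-1}U_0$, again a $0$-neighbourhood of $\mathcal U$. For $u\in U$ and $v\in B$, write $u=\lambda^{-1}u_0$ with $u_0\in U_0$ and $v=\lambda v_0$ with $v_0\in V_0$; bilinearity yields $m(u,v)=m(u_0,v_0)\in W$. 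Hence $\hat m(u)(B)\subseteq W$ for every $u\in U$, i.e. $\hat m(U)\subseteq N(B,W)$, and continuity of $\hat m$ at the origin follows.

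I do not expect a genuine obstacle here; the point of stating the lemma is conceptual. The only place the hypothesis truly bites is that $m$ is \emph{jointly} (not merely separately) continuous: with separate continuity alone one cannot absorb $B$ uniformly, and the conclusion can fail. So the "difficulty", such as it is, lies in resisting the Banach-space reflex of writing a norm estimate $\|m(u,v)\|\le C\|u\|\,\|v\|$, which is unavailable, and instead invoking the absorption property of bounded sets — after which the verification is the one-line computation above.
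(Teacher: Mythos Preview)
Your proof is correct and follows essentially the same route as the paper's: reduce to continuity at $0$, pick a basic $0$-neighbourhood $N(B,W)$ in $B(\mathcal V)$, use joint continuity of $m$ to get $m(U_0\times V_0)\subseteq W$, absorb $B$ into $\lambda V_0$, and rescale $U_0$ by $\lambda^{-1}$ so that bilinearity cancels the factors. The paper's argument is the same up to notation (it uses $P,Q,K,t$ where you use $U_0,V_0,B,\lambda$), so there is nothing to add.
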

	
	\begin{proof}
		Since $\widehat{m}$ is linear (by bilinearity of $m$), it suffices to prove continuity at $0\in\mathcal{U}$. 
		We need to show that if $(u_\lambda)$ is a net in $\mathcal{U}$ with $u_\lambda \to 0$, then 
		$\widehat{m}(u_\lambda)\to 0$ in $B(\mathcal V)$. 			
		A basic neighbourhood of $0$ in $B(\mathcal V)$ is of the form
		\[
		\mathcal{W}(K,W)=\bigl\{T\in B(\mathcal V):T(K)\subseteq W\bigr\},
		\]
		where $K\subseteq \mathcal{V}$ is a bounded set and $W\subseteq \mathcal{V}$ is a convex balanced neighbourhood of $0$. 
		(We may assume without loss of generality that $K$ is absolutely convex.)		
		Fix such a neighbourhood $\mathcal{W}(K,W)$. We need to find $\lambda_0$ such that 
		$\widehat{m}(u_\lambda)\in\mathcal{W}(K,W)$ for all $\lambda\ge\lambda_0$, 
		i.e.,\ $m(u_\lambda,K)\subseteq W$.	Since $m$ is jointly continuous at $(0,0)$ and $m(0,0)=0$, there exist convex balanced 
		neighbourhoods $P\subseteq \mathcal{U}$ of $0$ and $Q\subseteq \mathcal{V}$ of $0$ such that
		\[
		m(P\times Q)\subseteq W.
		\]

		Since $K$ is bounded and $Q$ is a neighbourhood of $0$ in $\mathcal{V}$, there exists $t>0$ such that
		\[
		K\subseteq tQ.
		\]

		Define the scaled neighbourhood $			P':=\frac{1}{t}P=\bigl\{u\in\mathcal{U}:tu\in P\bigr\}.$
		Then $P'$ is a convex balanced neighbourhood of $0$ in $\mathcal{U}$.	Let $u\in P'$ and $v\in K$.  
		Then $tu\in P$ (write $tu=u'$ with $u'\in P$) and $v=tq$ for some $q\in Q$. 
		By bilinearity of $m$,
		\begin{align*}
			m(u,v)
			&=m\!\left(\frac{1}{t}u',tq\right)
			=\frac{1}{t}\,m(u',tq)
			=\frac{1}{t}\cdot t\cdot m(u',q)
			=m(u',q).
		\end{align*}
		Since $u'\in P$ and $q\in Q$, we have $m(u',q)\in W$, hence $m(u,v)\in W$. 
		As $v\in K$ was arbitrary, $m(u,K)\subseteq W$.				
		Since $u_\lambda\to 0$ in $\mathcal{U}$, there exists $\lambda_0$ such that 
		$u_\lambda\in P'$ for all $\lambda\ge\lambda_0$. 
		For these indices, we have
		\[
		\widehat{m}(u_\lambda)(K)=m(u_\lambda,K)\subseteq W,
		\]
		so $\widehat{m}(u_\lambda)\in\mathcal{W}(K,W)$. 
		Thus $\widehat{m}(u_\lambda)\to 0$ in $B(\mathcal V)$.
	\end{proof}
	The above lemma holds significance because it gives us a recipe to construct representations of a topological algebra on $B_{loc}(\mathcal H)$ through coherent bilinear maps. In fact if $\mathcal U$ is a topological algebra, $\mathcal H=\coprod \mathcal H_\alpha$ is a locally Hilbert space and $m:\mathcal V\times \mathcal H\to \mathcal H$ is a jointly continuous bilinear map such that $m(\mathcal V, \mathcal H_\alpha)\subset \mathcal H_\alpha$ for each $\alpha$, then $\rho:\mathcal V\to B_{loc}(\mathcal H)$, defined as $\rho(v)h=m(v,h)$, is a continuous representation.\\

	Recall that in Lemma \ref{bilnet}, using the continuity of representation $\pi:\mathcal A\to B_{loc}(\mathcal H)$, we showed that the associated bilinear map $\theta:\mathcal A\times \mathcal H\to \mathcal H$ is jointly continuous. This was made possible due to the assumption that $\mathcal A$ is Fr\'echet space. In the next result we show a similar phenomena takes place if one of the space is locally bounded, i.e., there exists a bounded neighbourhood of zero. Although  interesting, next result does not help us to evade the Fr\'echet condition on $\mathcal A$ in Lemma \ref{bilnet} because locally Hilbert spaces are not locally bounded (this follows from \cite[4.6, Thm. 2]{Jarchow}, which states that a bounded subset of a strict inductive limit of an increasing sequence of locally convex spaces $\{E_n\}_{n\in \mathbb N}$ is necessarily contained in some $E_m$. Also the fact that Hausdorff locally convex spaces which have bounded convex neighbourhood of zero are necessarily normed, a famous result by Kolmogorov \cite{Kolmogorov}, tells that locally Hilbert spaces do not have bounded convex neighbourhood.)\\
	
	A topological vector space $\mathcal V$ is said to be locally bounded if there exists a (von Neumann) bounded neighbourhood of $0$.
	
	\begin{theorem}
		Let $X$ and $Y$ be locally convex topological vector space such that $Y$ is locally bounded (i.e., $Y$ is seminormed space). If $\phi:X\to B(Y)$ is a continuous linear map then the associated bilinear map $m:X\times Y\to Y$ $$m(x,y)=\phi(x)(y)$$is jointly continuous. ($B(Y)$ is equipped with topology of uniform convergence on bounded sets.)
	\end{theorem}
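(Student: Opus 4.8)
The plan is to exploit the single feature that distinguishes this statement from the naive ``operator norm'' intuition: local boundedness of $Y$ lets one fixed set play two roles at once --- a bounded set that controls the topology of $B(Y)$, and a $0$-neighbourhood of $Y$. So first I would fix a (von Neumann) bounded neighbourhood $V_0$ of $0$ in $Y$, which exists by hypothesis (one may take $V_0$ convex and balanced, though this is not needed). Recall that the topology of uniform convergence on bounded sets on $B(Y)$ has as a base of $0$-neighbourhoods the sets $\mathcal W(B,W)=\{T\in B(Y):T(B)\subseteq W\}$, where $B$ ranges over bounded subsets of $Y$ and $W$ over $0$-neighbourhoods in $Y$ (the family of bounded sets being directed, finite intersections are unnecessary). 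In particular, for every $0$-neighbourhood $W$ of $Y$, the set $\mathcal W(V_0,W)$ is a $0$-neighbourhood in $B(Y)$.

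Next I would reduce the claim to continuity of $m$ at the origin. Since $m$ is bilinear, continuity at $(0,0)$ forces joint continuity everywhere: from the decomposition $m(x,y)-m(x_0,y_0)=m(x-x_0,y-y_0)+m(x-x_0,y_0)+m(x_0,y-y_0)$, the first term is controlled by continuity at $(0,0)$, and the remaining two by separate continuity at the relevant points. Separate continuity is immediate here, because $m(x_0,\cdot)=\phi(x_0)$ is by definition a continuous operator and $m(\cdot,y_0)=\mathrm{ev}_{y_0}\circ\phi$ is continuous, $\mathrm{ev}_{y_0}$ being continuous from $B(Y)$ to $Y$ since $\{y_0\}$ is bounded, so that $\mathrm{ev}_{y_0}^{-1}(W)=\mathcal W(\{y_0\},W)$ is a basic $0$-neighbourhood of $B(Y)$.

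The heart of the proof is then one application of the continuity of $\phi$. Given a $0$-neighbourhood $W$ in $Y$, since $\mathcal W(V_0,W)$ is a $0$-neighbourhood in $B(Y)$ and $\phi$ is continuous, there is a $0$-neighbourhood $P$ in $X$ with $\phi(P)\subseteq\mathcal W(V_0,W)$, that is, $\phi(x)(V_0)\subseteq W$ for all $x\in P$. Taking $Q:=V_0$ as the neighbourhood on the $Y$-side gives $m(P\times Q)=\{\phi(x)(y):x\in P,\ y\in V_0\}\subseteq W$, which is exactly continuity of $m$ at $(0,0)$; combined with the previous paragraph this yields joint continuity of $m$.

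I do not expect a genuine obstacle; the only point requiring care is the observation that local boundedness of $Y$ allows the \emph{same} set $V_0$ to serve both as a member of the defining family of bounded sets for the $B(Y)$-topology and as the $Y$-side neighbourhood in the definition of joint continuity. (As the remark just preceding the statement recalls, Kolmogorov's theorem identifies such a $Y$ with a normed space, in which case the argument collapses to the familiar estimate $\|m(x,y)\|\le\|\phi(x)\|_{\mathrm{op}}\|y\|$; the proof above is merely its coordinate-free rendering.)
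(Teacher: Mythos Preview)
Your proof is correct and rests on the same pivotal observation as the paper's: a bounded $0$-neighbourhood in $Y$ can serve simultaneously as a bounded set (so that $\mathcal W(V_0,W)$ is open in $B(Y)$) and as the $Y$-side neighbourhood in the definition of joint continuity. The paper packages this slightly differently: it first proves the stronger standalone fact that the evaluation map $\varphi:B(Y)\times Y\to Y$, $(T,y)\mapsto T(y)$, is jointly continuous at the origin (using a seminorm $q$, the gauge functional of a bounded convex balanced neighbourhood $E$, and explicit $\tfrac12$-balls), and then obtains $m$ as the composite $\varphi\circ(\phi,\mathrm{id})$. Your argument bypasses the evaluation map and applies continuity of $\phi$ directly, which is shorter; the paper's route costs a few extra lines but isolates the reusable lemma that evaluation is jointly continuous whenever $Y$ is locally bounded. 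Also note that your reduction to continuity at $(0,0)$ via separate continuity is fine but slightly more than needed: for a bilinear map, continuity at the origin alone already forces continuity everywhere, since any fixed $y_0$ is absorbed by the $Y$-neighbourhood found at the origin.
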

	\begin{proof}
		It is sufficient to prove that the map $\varphi:B(Y)\times Y\to Y$ given by $\varphi(T,y)=T(y)$ is continuous because $\phi=\varphi\circ \mu$, where $\mu:X\times Y\to B(Y)\times Y$ is the continuous map $\mu(x,y)=(Tx,y)$. We will prove that continuity of $\varphi$ at $(0,0)$, which will imply that $\varphi$ is continuous everywhere.  Let $V$ be a neighbourhood of $0$ in $Y$. Fix a continuous seminorm $q$ on $Y$ such that $\{y\in Y:~q(y)<1\}\subset V$. Since $Y$ is locally bounded and locally convex, we can fix a bounded, convex, balanced and absorbing neighbourhood $E$ of $0$ in $Y$. Define a seminorm $\mu_E(T)=\sup_{z\in E}q(Tz)$ on $B(Y)$. Clearly, $\mu_E:B(Y)\to [0,\infty)$ is continuous (due to the topology of uniform convergence on bounded sets). Let $N=\{T\in B(Y)~:~\mu_E(T)<\frac{1}{2}\}$. Clearly $N$ is a neighbourhood of $0$ in $B(Y)$. Consider the Gauge functional $g_E(y)=\inf\{\lambda>0~:~y\in \lambda E\}$. Then $g_E$ is a continuous seminorm on $Y$ such that $\{y~:~g_E(y)<1\}\subset E$. Let $W=\{y\in Y~:~g_E(y)<\frac{1}{2}\}$ be a neighbourhood of $0$ in $Y$. We claim that $\varphi (N\times W)\subset V$. To see this, take $T\in N$ and $y\in W$. Then, $g_E(y)<\frac{1}{2}$ and hence $y= ke$ for some $0<k<\frac{1}{2}$ and $e\in E$. Thus, $T(y)=kT(e)$ and $$q(T(y))=kq(T(e))\leq k\sup_{z\in E}q(T(z))\leq\frac{1}{2}\mu_E(T)<\frac{1}{4}.$$ Hence, $T(y)\in \{y\in Y:q(T(y))<1\}\subset V$. Thus, $\varphi(N\times W)\subset V$. This concludes that $m:X\times Y\to Y$ is jointly continuous.
	\end{proof}
	Let $\Omega(\mathcal A)$ denote the set of all continuous multiplicative linear functionals on the topological algebra $\mathcal A$.
	\begin{theorem}
		Let $\mathcal A$ be a topological algebra with hypocontinuous multiplication such that $\Omega(\mathcal A)$ separates points of $\mathcal A^{**}$, then any $\sigma(\mathcal A^{**},\mathcal A^*)$ cluster point of idempotents of $\hat{\mathcal A}$ in $\mathcal A^{**}$ is again an idempotent with respect to both the Arens products.
	\end{theorem}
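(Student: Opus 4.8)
The plan is to push everything through the multiplicative functionals, using the separation hypothesis to recover an equality in $\mathcal A^{**}$ from an equality of scalars. First I would record how a fixed $\omega\in\Omega(\mathcal A)$ interacts with the operations of Subsection~\ref{apd}. Since $\omega$ is multiplicative, $\omega(ab)=\omega(a)\omega(b)$, and unwinding the definitions gives $\omega_{a}={}_{a}\omega=\omega(a)\,\omega$ for $a\in\mathcal A$, hence also $\omega_{u}={}_{u}\omega=u(\omega)\,\omega$ for $u\in\mathcal A^{**}$. Substituting into the definitions of the two Arens products then yields, for all $u,v\in\mathcal A^{**}$,
$$(u\,\square\, v)(\omega)=u(\omega)\,v(\omega)=(u\,\diamond\, v)(\omega);$$
in other words every $\omega\in\Omega(\mathcal A)$ remains multiplicative on $\mathcal A^{**}$, for \emph{both} Arens products simultaneously. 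I expect this computation to be entirely routine given the lemmas of Subsection~\ref{apd} that guarantee all the functionals appearing above are well defined and lie in the appropriate duals; this paragraph carries essentially all of the algebraic content of the argument, and is the step I would be most careful with.

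Next I would deal with the cluster point itself. Let $u\in\mathcal A^{**}$ be a $\sigma(\mathcal A^{**},\mathcal A^{*})$-cluster point of a net $(\hat e_{i})_{i}$ with each $\hat e_{i}\in\hat{\mathcal A}$ idempotent, i.e.\ $\hat e_{i}=J(e_{i})$ with $e_{i}^{2}=e_{i}$ in $\mathcal A$. Fixing $\omega\in\Omega(\mathcal A)\subseteq\mathcal A^{*}$, the evaluation map $\mathrm{ev}_{\omega}\colon(\mathcal A^{**},\sigma(\mathcal A^{**},\mathcal A^{*}))\to\mathbb C$ is continuous, so $u(\omega)$ is a cluster point of the scalar net $(\omega(e_{i}))_{i}$. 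From $\omega(e_{i})=\omega(e_{i}^{2})=\omega(e_{i})^{2}$ we get $\omega(e_{i})\in\{0,1\}$ for every $i$, and since $\{0,1\}$ is closed in $\mathbb C$ this forces $u(\omega)\in\{0,1\}$; in particular $u(\omega)^{2}=u(\omega)$.

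Finally I would combine the two steps: for every $\omega\in\Omega(\mathcal A)$,
$$(u\,\square\, u)(\omega)=u(\omega)^{2}=u(\omega)\qquad\text{and}\qquad (u\,\diamond\, u)(\omega)=u(\omega)^{2}=u(\omega),$$
so both $u\,\square\, u-u$ and $u\,\diamond\, u-u$ are annihilated by every member of $\Omega(\mathcal A)$. Because $\Omega(\mathcal A)$ separates the points of $\mathcal A^{**}$, it follows that $u\,\square\, u=u$ and $u\,\diamond\, u=u$, which is exactly the assertion. The remaining points are soft: identifying idempotents of $\hat{\mathcal A}$ with idempotents of $\mathcal A$ (using that $J$ embeds $\mathcal A$ as a subalgebra of $(\mathcal A^{**},\square)$ under hypocontinuity), transporting the cluster point along the continuous maps $\mathrm{ev}_{\omega}$, and making the final appeal to the separation hypothesis. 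None of these should be a genuine obstacle; the only place demanding attention is the character computation of the first paragraph.
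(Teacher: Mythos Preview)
Your proposal is correct and follows essentially the same route as the paper: both arguments reduce to showing that each $\omega\in\Omega(\mathcal A)$ satisfies $(u\square u)(\omega)=u(\omega)^2=u(\omega)$ via the identity ${}_{u}\omega=u(\omega)\,\omega$, then invoke the separation hypothesis. The only cosmetic differences are that you front-load the character computation (establishing $(u\square v)(\omega)=(u\diamond v)(\omega)=u(\omega)v(\omega)$ once and for all), and you handle the cluster-point condition via closedness of $\{0,1\}$ rather than by passing to a convergent subnet as the paper implicitly does; both are fine.
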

	\begin{proof}
		Let $u\in \mathcal A^{**}$ be a $\sigma(\mathcal A^{**},\mathcal A^*)$-cluster point of idempotents in $\hat{\mathcal A}$. Then there exists a net $\{\hat{a}_i\}$ such that $a_i\in \mathcal A$, $a_i^2=a_i$ for each $i$ and $\hat{a}_i\overset{\sigma(\mathcal A^{**},\mathcal A)}{\looongrightarrow} u$. Hence, 
		\begin{equation}
			f(a_i)\to u(f)~~\forall~f\in \mathcal A^*.
		\end{equation}
		But for $f\in \Omega(\mathcal A)$, we have $$f(a_i)=f(a_i^2)=(f(a_i))^2~~\forall i.$$
		Hence, either $f(a_i)=0$ or $f(a_i)=1$ holds true for each $i$. Since, the net $\{f(a_i)\}$ is convergent, we deduce that it is either eventually 0 or eventually 1. Hence, \begin{equation}\label{zoo}
			u(f)=0 ~\text{or}~ u(f)=1 ~\text{for any}~f\in \Omega(\mathcal A).
		\end{equation}
		Now, notice that
		\begin{equation}\label{prod}
			u\square u(f)=u({}_{u}f) ~\forall f\in \mathcal A^*.
		\end{equation}
		But for $f\in \Omega(\mathcal A)$, \begin{align*}
			{}_{u}f(x)&=u(f_x)\\
			&=u\left(f(x).f\right)\\
			&=f(x)u(f).
		\end{align*}
		Hence, \begin{equation}
			{}_{u}f=u(f).f
		\end{equation}
		Using Equation \ref{prod} we get that for all $f\in \Omega (\mathcal A)$ \begin{align*}
			u\square u(f)&=u(u(f).f)\\
			&=\left(u(f)\right)^2.
		\end{align*}
		But due to Equation \ref{zoo}, both $u\square u(f)$ and $u(f)$ can take values either 0 or 1 for $f\in \Omega(\mathcal A)$ . Hence, \begin{equation}
			u\square u(f)=u(f) ~~\forall f\in \Omega(\mathcal A).
		\end{equation}
		By assumption, $\Omega(\mathcal A)$ separates points of $\mathcal A^{**}$ and hence $u\square u=u$, i.e., $u$ is an idempotent with respect to first Arens product. Similarly, one can show that $u$ is an idempotent with respect to second Arens product.
	\end{proof}
	\begin{example}
		Simplest example of this phenomena can be seen in the case of $C^*$-algebra $\mathcal A=c_0(\mathbb Z)$. We know that $\mathcal A^{**}=\ell^\infty(\mathbb Z)$ and $\Omega(\mathcal A)=\mathbb Z$. Clearly, $\Omega(\mathcal A)$ separates points in $\mathcal A^{**}$. The idempotents in $c_0(\mathbb Z)$ are the functions with finite support taking values 0 and 1. While the idempotents in $\ell^\infty(\mathbb Z)$ are all the functions which takes only values 0 and 1.
	\end{example}
	\noindent We end this article with the following relevant open problems and future pursuits.\\
	
	\noindent	Q1. Give more examples of topological algebras (perhaps non-commutative) with hypocontinuous multiplication such that the character space $\Omega(\mathcal A)$ separate points in the strong bidual.\\
		
	\noindent	Q2. Characterize Quasibarrelled locally $C^*$-algebra and examples of such which are not Fr\'echet (even better if non-commutative).\\
	
	For a $C^*$-algebra $\mathcal A$, the set of representations of $\mathcal A$ and $\mathcal A^{**}$ have one-to-one correspondence, because of the fact that $\mathcal A$ embeds isometrically into $\mathcal A^{**}$. This is not true for locally $C^*$-algebras because if $\mathcal A$ is non-quasibarrelled locally $C^*$-algebras, then a representation of $\mathcal A^{**}$ on $B_{loc}(\mathcal H)$ cannot be compressed to a representation of $\mathcal A$. The topology of $\mathcal A$ is courser than the strong topology it inherits from $J(\mathcal A)$.\\
	
	\noindent Q3. Characterize/classify locally $C^*$-algebras which have Kaplanski Density property (KDP) and give more concrete examples.\\
	~\\~\\
	\noindent	{\bf Acknowledgements:} This research was carried out with the financial support of Slovenian
	Agency for Research and Innovation (grant P1-0222-Algebra, operator theory and financial
	mathematics at Institute of mathematics, physics and mechanics, Ljubljana, Slovenia).
	The second author is also a member of the University of Ljubljana, KTT21 project
	group (grant SN-ZRD/22-27/0510). We would like to thank Prof.(ret.) Ajit Iqbal Singh for bringing to our notice various references related  to locally $C^*$-algebras.

	\Addresses

\begin{thebibliography}{11}
		\bibitem{Kolmogorov}A.N. Kolmogorov,\say{\emph{Zur Normierbarkeit eines allgemeinen topologischen linearen Räumes}}. Studia Math. 5. 1934.
		\bibitem{Sherman}S. Sherman, \say{\emph{The second adjoint of a C* algebra}}, Proceedings of the International Congress of Mathematicians 1950 (PDF), vol. 1, Providence, R.I.: American Mathematical Society, p. 470.
		\bibitem{A-M}E.A Michael, \say{\emph{Locally multiplicatively-convex topological algebras}}, Memoirs of the American mathematical society, 11, 1952.
		\bibitem{Takeda}Z. Takeda, \say{\emph{Conjugate spaces of operator algebras}}, Proceedings of the Japan Academy, 30 (2): 90–95, 1954.
		\bibitem{Nachbin}L. Nachbin,\say{\emph{ Topological vector spaces of continuous functions}}, Proc. Nat. Acad. Sci. U.S.A. 40 (1954), 471-474.
		\bibitem{Shirota} T. Shirota,\say{\emph{ On locally convex vector spaces of continuous functions}}, Proc. Japan Acad. Sci. 30 (1954), 294-298. 
		\bibitem{Gulick} S.L. Gulick ,\say{\emph{The bidual of locally multiplicatively-convex algebra}}, Pacific journal of mathematics, Vol. 17, No.1, January 1966.
		\bibitem{Treves} F. Treves, \say{\emph{Topological vector spaces, distributions and kernels}},New York: Academic Press, 1967.
		\bibitem{Kothe} G. Köthe, \say{\emph{Topological Vector Space I}}, Springer-Verlag, New York, 1969.
		\bibitem{Inoue}A. Inoue, \say{\emph{Locally $C^*$-algebras}}, Memoirs of the faculty of science, Kyushu University, Ser. A, Vol 25, No. 2, 1971.
		\bibitem{Jarchow}H. Jarchow, \say{\emph{Locally convex spaces}}, Springer, 1981.
		\bibitem{Maria} M. Fragoulopoulu, \say{\emph{On locally $W^*$-algebras}}, Yokohama Mathematical Journal, Vol. 38, 1984, 35-51.
		\bibitem{Philip}N. C. Phillips, \say{\emph{Inverse limits of 
				$C^*$-algebras}}, J. Operator Theory 19 (1988) 159–195.
		\bibitem{Murphy} G.J Murphy, \say{\emph{$C^*$-algebras and operator theory}}, Academic Press, Elsevier 1990.
		\bibitem{Bhatt}S.J. Bhatt, D.J. Karia,
		\say{\emph{An Intrinsic Characterization of Pro-C*-Algebras and Its Applications}},
		Journal of Mathematical Analysis and Applications,
		Volume 175, Issue 1,
		Pages 68-80, 1993.
		\bibitem{Schaefer} H.H. Schaefer, M.P. Wolff, \say{\emph{Topolgical Vector spaces}}, Graduate texts in mathematics, Second edition, Springer, 1999.
		\bibitem{Joita} M. Joita, \say{\emph{Locally von Neumann algebras}},  Bull. Math. Soc. Sc. Math. Roumanie
		Tome 42 (90) No. 1, 1999.
		\bibitem{Maria2}M. Fragoulopoulu, \say{\emph{Topological algebras with involution}}, North Holland mathematical studies, Springer 2005.
		%\bibitem{Haralam}M. Haralampidou. M. Oudadess. L. Palacios. C. Signoret. \say{\emph{On different barrelledness notions in locally convex algebras.}} Bull. Belg. Math. Soc. Simon Stevin 22 (1) 25 - 38, march 2015.
		\bibitem{Filali}M. Filali, M.S. Monfared, \say{\emph{
				On algebras with locally convex topologies admitting Arens products in their second topological duals}},
		Journal of Mathematical Analysis and Applications,
		Volume 549, Issue 1,
		129477
		2025.
		\bibitem{Gheondea}A. Gheondea, \say{\emph{Representing Locally Hilbert Spaces and Functional Models for Locally Normal Operators}}, Results in Mathematics, 20 (2026).
	\end{thebibliography}
\end{document}